\newcommand{\al}{\alpha}
\newcommand{\be}{\beta}
\newcommand{\V}{\mathcal{V}}
\newcommand{\R}{\mathbb{R}}
\newcommand{\N}{\mathbb{N}}
\newcommand{\mH}{\mathcal{H}}
\newcommand{\F}{\mathcal{F}}
\newcommand{\bF}{\mathbf{F}}
\newcommand{\lan}{\langle}
\newcommand{\ran}{\rangle}
\newcommand{\la}{\lambda}
\newcommand{\lc}{\scalebox{1.5}{$\llcorner$}}
\newcommand{\La}{\Lambda}
\newcommand{\Si}{\Sigma}
\newcommand{\de}{\delta}
\newcommand{\ep}{\epsilon}
\newcommand{\pr}{\prime}
\newcommand{\Ga}{\Gamma}
\newcommand{\ga}{\gamma}
\newcommand{\ti}{\tilde}
\newcommand{\X}{\mathfrak{X}}
\newcommand{\Diff}{\textrm{Diff}_0}
\newcommand{\Js}{\mathfrak{Js}}
\newcommand{\rom}[1]{\expandafter\romannumeral #1}
\newcommand{\Rom}[1]{\uppercase\expandafter{\romannumeral #1}}
\newcommand{\Gau}{\frac{1}{4\pi}e^{-\frac{|x|^2}{4}}}
\newcommand{\beq}{\begin{equation}}
\newcommand{\eeq}{\end{equation}}
\newcommand{\bite}{\begin{itemize}}
\newcommand{\eite}{\end{itemize}}
\newcommand{\benu}{\begin{enumerate}}
\newcommand{\eenu}{\end{enumerate}}
\begin{document}

\newtheorem{theorem}{Theorem}[section]
\newtheorem{proposition}[theorem]{Proposition}
\newtheorem{corollary}[theorem]{Corollary}

\newtheorem{claim}{Claim}

\theoremstyle{remark}
\newtheorem{remark}[theorem]{Remark}

\theoremstyle{conjecture}
\newtheorem{conjecture}[theorem]{Conjecture}

\theoremstyle{definition}
\newtheorem{definition}[theorem]{Definition}

\theoremstyle{plain}
\newtheorem{lemma}[theorem]{Lemma}

\numberwithin{equation}{section}

%
%
%
\titleformat{\part}
{\normalfont\large\bfseries}
{\partname\vspace{1em} \thepart:}
{1em}
{\large}
%
%
 \titleformat{\section}
   {\normalfont\bfseries\large\filcenter}
   {\arabic{section}}
   {12pt}{}
 \titleformat{\subsection}
   {\normalfont\bfseries}
   {\arabic{section}.\arabic{subsection}}
   {11pt}{}
%
%
%

\pagestyle{headings}
\renewcommand{\headrulewidth}{0.4pt}

\title{\textbf Entropy of closed surfaces and min-max theory}
\author{Daniel Ketover and Xin Zhou\footnote{The first author is partially supported by an NSF Postdoctoral Fellowship.  The second author is partially supported by NSF grant DMS-1406337.}}
\date{\today}
\maketitle

\pdfbookmark[0]{}{beg}

\renewcommand{\abstractname}{}    
\renewcommand{\absnamepos}{empty} 
\begin{abstract}
\textbf{Abstract:} Entropy is a natural geometric quantity measuring the complexity of a surface embedded in $\mathbb{R}^3$.  For dynamical reasons relating to mean curvature flow, Colding-Ilmanen-Minicozzi-White conjectured that the entropy of any closed surface is at least that of the self-shrinking two-sphere.  We prove this conjecture for all closed embedded $2$-spheres.  Assuming a conjectural Morse index bound (announced recently by Marques-Neves), we can improve the result to apply to all closed embedded surfaces that are not tori.  Our results can be thought of as the parabolic analog to the Willmore conjecture and our argument is analogous in many ways to that of Marques-Neves on the Willmore problem.  The main tool is the min-max theory applied to the Gaussian area functional in $\mathbb{R}^3$ which we also establish.  To any closed surface in $\mathbb{R}^3$ we associate a four parameter canonical family of surfaces and run a min-max procedure.  The key step is ruling out the min-max sequence approaching a self-shrinking plane, and we accomplish this with a degree argument.  To establish the min-max theory for \ $\mathbb{R}^3$ with Gaussian weight, the crucial ingredient is a tightening map that decreases the mass of non-stationary varifolds (with respect to the Gaussian metric of $\R^3$) in a continuous manner.

\end{abstract}




\section{Introduction}
Surprisingly, the resolution of the Willmore Conjecture by F.C. Marques and A. Neves \cite{MN12} hinges on asking and answering the following question:  
\\
\\
{\em In round $\mathbb{S}^3$, what is the non-equatorial embedded minimal surface with smallest area?} 
\\
\\
Using min-max theory, \cite{MN12} proved that the answer is the Clifford torus.  This swiftly led to a proof of the Willmore conjecture. In this paper, we address the analogous question for singularity models for the mean curvature flow and cast the question in terms of min-max theory.  The min-max argument will also allow us to partly prove a 'parabolic' version of the Willmore conjecture. Let us first introduce the relevant objects of study.

We will consider $\R^3$ endowed with the Gaussian metric $g_{ij}=e^{-|x|^2/4}\delta_{ij}$.   Minimal surfaces in this metric are called \emph{self-shrinkers} and arise as blowup limits at singularities of the mean curvature flow (MCF).   The simplest self-shrinkers are the family of flat planes passing through the origin (parameterized by $\mathbb{RP}^2$, their unoriented normal).  These surfaces arise as blowup limits at {\em smooth} points along the MCF.   Other examples include the round two-sphere (suitably normalized), which is the type of singularity one encounters as a sphere shrinks to a point by MCF, and the cylinder, which can be thought to model a neck-pinch type singularity.   

 For a smooth surface $\Sigma\subset\R^3$, we define its {\em Gaussian area}:
\begin{equation}\label{Gaussian area}
F(\Sigma)= \frac{1}{4\pi}\int_\Sigma e^{-|x|^2/4}dx.
\end{equation}
\indent  
Critical points for Gaussian area are precisely the self-shrinkers \cite[Proposition 3.6]{CM12}.  Following Colding-Minicozzi \cite{CM12}, for any surface $\Sigma\subset\R^3$, we define the \emph{entropy} of $\Sigma$ to be the supremum of Gaussian areas over all translations ($t\in\mathbb{R}^3$) and rescalings ($s\in[0,\infty)$) of $\Sigma$:
\begin{equation}\label{definition of entropy}
\lambda(\Sigma):=\sup_{t,s}F(s(\Sigma-t)),
\end{equation}
where the translated and dilated surface $s(\Sigma-t)$ is defined as 
\begin{equation}
s(\Sigma-t):=\{s(x-t)|x\in\Sigma\}.
\end{equation}
One key property of a self-shrinker is that its entropy is equal to its Gaussian area (c.f. \cite[\S 7.2]{CM12} and \S \ref{entropy achieved at (0, 1)}), analogous to the fact that the Willmore energy is equal to area for a minimal surface in $\mathbb{S}^3$. 

Just as for minimal surfaces in a smooth $3$-manifold, on a self-shrinker $\Sigma$ one can consider the second variation of Gaussian area. For any smooth function $\phi$ defined on $\Sigma$, if $\mathbf{n}$ is a choice of unit normal vector field along $\Sigma$
we consider \cite[Theorem 4.1]{CM12}
\begin{equation}
\frac{\partial^2}{\partial s^2}\Big|_{s=0} F(\Sigma+\mathbf{n}\phi s) =\frac{1}{4\pi} \int_\Sigma \phi(L\phi) e^{-|x|^2/4}dx,
\end{equation}
where $L$ is the second-order elliptic operator defined on $\Sigma$ as 
 \begin{equation}
L = \Delta+|A|^2-\frac{1}{2}\langle x,\nabla (\cdot)\rangle + \frac{1}{2}.
\end{equation}

\indent
The \emph{Morse index} of a self-shrinker is the dimension of a maximal subspace of variations on which $L$ is negative definite. A self-shrinker is stable if the Morse index is $0$. It is easily seen that there is no stable closed self-shrinker \cite[\S 0.2]{CM12}. Moreover, Colding-Minicozzi \cite{CM12} have classified those self-shrinkers with Morse index at most $4$: they are $\mathbb{R}^2$, $\mathbb{S}^2$ and $\mathbb{S}^1\times\mathbb{R}$.  The entropies of these surfaces were computed by Stone \cite{ST94}: $\lambda(\mathbb{R}^2)=1$, $\lambda(\mathbb{S}^2)\approx1.47$ and  $\lambda(\mathbb{S}^1\times\mathbb{R}^1)\approx 1.5203$.   Recently Brendle \cite{BR} has classified shrinkers with genus $0$: they are either the plane, cylinder or sphere.  \\
\indent
The self-shrinker of smallest area is the flat plane, and one can ask (as Marques-Neves \cite{MN12} ask in $\mathbb{S}^3$) which non-flat self-shrinker is simplest, i.e. has smallest area above the flat planes.  Using dynamical methods, Colding-Ilmanen-Minicozzi-White \cite{CIMW} proved that the {\it compact} self-shrinker with smallest area above the plane is the two sphere.  They further conjectured:
\begin{conjecture}\cite{CIMW}\label{cimw}
 For any smooth closed embedded surface $\Sigma$ in $\mathbb{R}^3$, 
\begin{equation}\label{min}
\lambda(\Sigma)\geq\lambda(\mathbb{S}^2).
\end{equation}   
\end{conjecture}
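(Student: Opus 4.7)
The plan is to apply a min-max procedure for the Gaussian area functional to a canonical four-parameter family of surfaces built from $\Sigma$, and to extract a self-shrinker $\Gamma$ whose Gaussian area is at least $\lambda(\mathbb{S}^2)$ yet at most $\lambda(\Sigma)$. I would consider the family $\Sigma_{t,s} := s(\Sigma - t)$ parameterized by $(t,s)\in\mathbb{R}^3\times(0,\infty)$. By the definition of entropy, $F(\Sigma_{t,s})\leq \lambda(\Sigma)$ for every $(t,s)$, while one checks that $F(\Sigma_{t,s})\to 0$ as $s\to 0$ or as $|t|\to\infty$, and as $s\to\infty$ the surface $\Sigma_{t,s}$ converges as a varifold to the tangent plane $T_t\Sigma$ (translated to pass through the origin) when $t\in\Sigma$, and to the zero varifold when $t\notin\Sigma$. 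Gluing in these limits yields a continuous sweepout on a compact four-dimensional parameter space whose ``boundary'' consists of planes through the origin and zero varifolds.

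Next, I would develop the requisite min-max theory for Gaussian area in $\mathbb{R}^3$. The key technical input is a continuous tightening map that strictly decreases Gaussian mass on non-stationary varifolds in the weighted space $(\mathbb{R}^3, e^{-|x|^2/4}\delta_{ij})$; this is the Gaussian analogue of Almgren-Pitts pull-tight, adapted to the non-compact setting. Once tightening and the corresponding regularity theory are in place, the standard machinery produces a closed smooth self-shrinker $\Gamma$ (possibly with integer multiplicity) realizing the min-max width $W$, with
\[
F(\Gamma)\cdot \mathrm{mult}(\Gamma) \;=\; W \;\leq\; \lambda(\Sigma).
\]

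The crux of the argument is to show that $\Gamma$ is not a flat plane through the origin. Planes have $F=1$ and form a positive-dimensional manifold of critical configurations, so they cannot be excluded by a naive lower bound on $W$; instead, I would run a degree argument. The boundary slice at $s=\infty$ realizes the Gauss map $t\mapsto T_t\Sigma\in\mathbb{RP}^2$, which for any closed embedded $\Sigma$ has nontrivial mod-$2$ degree. Suppose for contradiction that every min-max realizer is a multiplicity-$k$ plane; because planes can be continuously deformed to the zero varifold by the rescaling parameter $s$, a parametrized pull-tight localized near the set of plane limits produces a competitor sweepout whose maximum Gaussian area lies strictly below $W$, while the nontrivial Gauss-map degree guarantees that the deformation does not destroy the topological nontriviality of the sweepout, contradicting the definition of the width. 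This is the main obstacle, as the deformation must be controlled uniformly over a non-compact family of potential plane limits and reconciled with the non-compactness of the ambient space.

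Having ruled out planes, I would invoke a classification of low-complexity self-shrinkers to finish. In the general case, the (conditional) Morse-index bound announced by Marques-Neves gives that a four-parameter min-max critical point has Morse index at most $4$, so Colding-Minicozzi's classification forces $\Gamma \in \{\mathbb{R}^2,\mathbb{S}^2,\mathbb{S}^1\times\mathbb{R}\}$; since planes are excluded and $\lambda(\mathbb{S}^1\times\mathbb{R}) > \lambda(\mathbb{S}^2)$, it follows that $F(\Gamma)\geq \lambda(\mathbb{S}^2)$ and hence $\lambda(\Sigma)\geq \lambda(\mathbb{S}^2)$. For the unconditional sphere case, when $\Sigma$ is topologically $\mathbb{S}^2$ every slice of the sweepout has genus $0$, and propagating a genus bound through the min-max process together with Brendle's classification of genus-$0$ shrinkers reduces to the same three possibilities, yielding the same conclusion.
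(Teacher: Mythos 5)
Your proposal follows essentially the same route as the paper (canonical four-parameter family $s(\Sigma-t)$, boundary blow-up to planes, a Gaussian min-max theorem with a tightening map on $S^3=\R^3\cup\{\infty\}$, a degree argument to exclude the multiplicity-one plane, then classification of low-complexity shrinkers), but it overclaims: as written it does not prove the conjecture for \emph{all} closed embedded surfaces, and the gap is concentrated in exactly the places where the paper itself has to restrict its conclusion.

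The concrete problems are two. First, your key topological input is false as stated: the Gauss map of a closed embedded genus-$g$ surface has degree $1-g$, so it is \emph{not} of nontrivial (even mod-$2$) degree for every $\Sigma$; for the torus the degree is $0$, and the obstruction-theoretic argument (extending the Gauss map, viewed in $\mathbb{RP}^2$, over the handlebody bounded by $\Sigma$) gives no contradiction when $g=1$. Hence the plane cannot be ruled out by this mechanism for tori, and your argument says nothing about the genus-$1$ case of the conjecture. Second, for genus $\geq 2$ your reduction to the Colding--Minicozzi classification rests on a Morse index bound for Gaussian min-max limits that is only announced/conjectural (and would in any case need to be verified in the Gaussian, non-compact setting), so that part of the argument is conditional, not a proof. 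What survives unconditionally is exactly the paper's Theorem on $2$-spheres: there every slice has genus $0$, the Simon--Smith genus bound propagates through the min-max, Brendle's classification reduces to plane, sphere, or cylinder, and the degree argument (valid since $g=0\neq 1$) excludes the plane. You should also be explicit about why higher multiplicity and the cylinder are harmless (e.g.\ either by the paper's reduction to $\lambda(\Sigma)<3/2$, or by noting that multiplicity and the cylinder only strengthen the inequality); your sketch of the plane-exclusion step is also vaguer than what is needed, since the paper must convert ``every slice near some plane'' into a continuous $\mathbb{RP}^2$-valued extension over a handlebody via a fine triangulation and geodesically convex balls, rather than directly deforming the sweepout below the width.
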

\noindent
The motivation as articulated in \cite{CIMW} for Conjecture \ref{cimw} is dynamical: starting with a closed surface, since the entropy is decreasing along the MCF and invariant under dilation and translation, by blowing up at a first singularity one should be able to prove \eqref{min}.  Necessary for carrying this out, however, is knowing that the {\em non-compact} shrinkers also have area at least that of the two-sphere, which did not follow from their arguments.  

 Our main result is a min-max proof of this conjecture for closed $2$-spheres:
\begin{theorem}\label{main}
Let $\Sigma\subset\mathbb{R}^3$ be a smooth closed embedded $2$-sphere.  Then $$\lambda(\Sigma)\geq\lambda(\mathbb{S}^2).$$
\end{theorem}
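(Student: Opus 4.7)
My plan is to adapt the Marques--Neves min-max strategy for the Willmore conjecture to the Gaussian setting.

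\textbf{Canonical family and sweep-out.} To the embedded two-sphere $\Sigma$ I would associate the four-parameter family $\Sigma_{t,s} := s(\Sigma - t)$ with $(t,s) \in \R^3 \times (0,\infty)$; by (\ref{definition of entropy}), $\sup_{(t,s)} F(\Sigma_{t,s}) = \lambda(\Sigma)$.  Since $\R^3 \times (0,\infty) \cong \R^4$, the parameter space admits a compactification to a closed $4$-ball $\overline{B^4}$ whose boundary corresponds to the following varifold limits of $\Sigma_{t,s}$: the empty varifold (of Gaussian area $0$) as $s \to 0^+$ or $|t| \to \infty$, and self-shrinking planes (of Gaussian area $1 = \lambda(\R^2)$) when $s \to \infty$ with $t$ approaching a point of $\Sigma$.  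The resulting continuous sweep-out $\Phi \colon \overline{B^4} \to \mathcal{V}(\R^3)$ has interior maximum $\lambda(\Sigma)$ and boundary values of Gaussian area bounded by $1$.

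\textbf{Extracting a self-shrinker.} Arguing by contradiction, suppose $\lambda(\Sigma) < \lambda(\mathbb{S}^2)$.  Since $1 < \lambda(\mathbb{S}^2)$, the interior max of $F \circ \Phi$ strictly exceeds its boundary max, so the sweep-out has positive min-max width $W(\Sigma) \in (1, \lambda(\Sigma)]$.  Running the Gaussian min-max theory developed in this paper, whose central input is a continuous tightening map that strictly decreases Gaussian mass for non-stationary $F$-varifolds, I would extract a stationary $F$-varifold (i.e.\ a self-shrinker) $\Gamma$ with $F(\Gamma) = W(\Sigma)$.  The four-dimensional family provides Morse index at most $4$, so by the Colding--Minicozzi index classification (or, if appropriate genus control on the limit is available, by Brendle's classification of genus-zero shrinkers), $\Gamma$ is a plane, a sphere, or a cylinder.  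Since $F(\mathbb{S}^2), F(\mathbb{S}^1 \times \R) \ge \lambda(\mathbb{S}^2)$, the only option consistent with $F(\Gamma) < \lambda(\mathbb{S}^2)$ is that $\Gamma$ is a self-shrinking plane.

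\textbf{Ruling out the plane by a degree argument.} This step is the main obstacle.  I would argue that if every min-max subsequence converges (as a Gaussian varifold, up to translation) to a self-shrinking plane, then after tightening, the values of the sweep-out at parameters near the maximum could be deformed into a tubular neighborhood of the set $\mathcal{P} \subset \mathcal{V}(\R^3)$ of self-shrinking planes.  Since $\mathcal{P} \cong \mathbb{RP}^2$ (parameterized by the unoriented normal), this deformation yields a map $\Phi' \colon \overline{B^4} \to \mathbb{RP}^2$ whose boundary restriction is determined by the limiting tangent-plane directions along the canonical family.  By the Gauss map computation for the embedded topological sphere $\Sigma$ (whose Gauss map has mod $2$ degree $1$ since $\chi(\Sigma) = 2$), the restriction $\Phi'|_{\partial \overline{B^4}}$ represents a nontrivial class in the relevant homotopy group of $\mathbb{RP}^2$, precluding extension across $\overline{B^4}$.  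This contradicts the planar alternative; therefore $\Gamma$ must be a sphere or a cylinder, and we conclude $\lambda(\Sigma) \ge F(\Gamma) \ge \lambda(\mathbb{S}^2)$, proving the theorem.
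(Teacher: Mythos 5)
Your overall architecture (canonical four-parameter family, Gaussian min-max, classification of low-complexity shrinkers, Gauss-map obstruction) matches the paper, but there is a genuine gap at the decisive step. You write that since the interior maximum of $F\circ\Phi$ strictly exceeds its boundary maximum, ``the sweep-out has positive min-max width $W(\Sigma)\in(1,\lambda(\Sigma)]$.'' That inference is false: the width is the infimum of the max over the whole saturated class, and a single sweepout whose interior max exceeds its boundary max says nothing about this infimum. Example 2 of the paper is exactly a counterexample to this reasoning: the sweepout of $\R^3$ by concentric spheres passes through the self-shrinking sphere (area $\approx 1.47$), yet the width of its saturation is $1$, because translated and dilated competitors make every slice look like an affine plane. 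Establishing $W(\Lambda_\Sigma)>1$ is the heart of the matter (Proposition \ref{big}), and note that once $W>1$ is known, the plane is excluded for free since its Gaussian area is $1<W=F(\tilde\Sigma)$; so the degree argument you place at the end, ``ruling out the plane limit,'' is doing no work where you put it. Its real job is to prove the strict lower bound $W>1$ before the min-max theorem can even be invoked (Theorem \ref{main min-max theorem} requires $W>\max_{\partial I^n}F$), and in the degenerate scenario $W=1$ you cannot extract a plane ``limit up to translation'' to contradict, because minimizing sequences can simply escape to infinity as in Example 2.

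Your implementation of the degree argument also does not go through as stated. A map $\overline{B^4}\to\mathbb{RP}^2$ is not available: on most of $\partial\overline{B^4}$ the family consists of trivial (empty) surfaces, not planes, and ``parameters near the maximum'' is not a controlled set. The paper instead argues by contradiction from $W(\Lambda_\Sigma)=1$: for a tightened minimizing sequence, every vertical path over a point of the inner region $H$ sweeps out $\R^3$ (Lemma \ref{sweep} plus the isoperimetric inequality), so by the almost-minimizing/regularity machinery some slice must come $\bF_{S^3}$-close to the space $\mathcal{P}\cong\mathbb{RP}^2$ of planes; the component of parameters whose surfaces stay far from $\mathcal{P}$ then has boundary a three-dimensional handlebody $H'(i)$ in parameter space with $\partial H'(i)\simeq\Sigma$, and a fine triangulation with nearest-point projection to $\mathcal{P}$ produces a continuous extension of the reduced Gauss map $\tilde G:\Sigma\to\mathbb{RP}^2$ to a solid handlebody, which Lemma \ref{rp2} forbids (via the $\pi_1$-lifting argument and the fact that the Gauss map has degree $1-g\neq0$; for the sphere this is degree $1$). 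Finally, your fallback on a Morse index bound of $4$ and the Colding--Minicozzi classification is not justified here: such index bounds for Gaussian min-max limits are only conjectural in this paper, and the actual proof uses the genus-$0$ control from the Simon--Smith-type theorem together with Brendle's classification, plus the reduction to $\lambda(\Sigma)<3/2$ to fix multiplicity one and exclude the cylinder.
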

\noindent

In many ways Conjecture \ref{cimw} can be thought of as a 'parabolic' analog of the Willmore conjecture where the entropy plays the role of the Willmore energy.   We will use the min-max theory applied to the Gaussian area functional.  The key discovery is that any closed embedded surface in $\mathbb{R}^3$ that is not a torus gives rise to a natural and non-trivial element in the relative homotopy group $\pi_4(\mathcal{Z}_2(\mathbb{R}^3),\{\mbox{Affine Planes}\})$ whose corresponding min-max critical point is the round sphere.  In the proof of the Willmore Conjecture, the essential discovery was similarly the existence of a non-trivial element in $\pi_5(\mathcal{Z}_2(\mathbb{S}^3),\{\mbox{Geodesic Spheres}\})$, whose corresponding critical point is the Clifford torus.  At each stage of the argument we encounter issues very similar to those handled by Marques-Neves \cite{MN12}.

\begin{remark}
Assuming a conjectural index bound for min-max limits that has recently been announcd by Marques-Neves, Theorem \ref{main} can be extended to all closed surfaces with genus not equal to $1$.  
\end{remark}
\begin{remark}
Recently Bernstein-Wang \cite{BW},\cite{BW2} have proved Conjecture \ref{cimw} using a completely different method.  They have further shown that non-compact shrinkers in $\R^3$ have Gaussian area at least that of the round two-sphere.  The point of this paper is to give a completely independent proof using entirely different methods.  This is also the first case beyond the Willmore Conjecture where the min-max method and index/genus classification theorems can give lower bounds for area of minimal surfaces.
\end{remark}
\vspace{1.5em}

Let us briefly sketch the argument for Theorem \ref{main}.  Starting out with a smooth two-sphere $\Sigma$, one can consider the canonical four parameter family of genus 0 surfaces given by $\Sigma_{s,t}=s(\Sigma-t)$ of translates and dilates of $\Sigma$.  This is a natural sweepout to consider since translates and dilates are the $4$ unstable directions that are always present for any self-shrinker.  By definition, the entropy $\lambda(\Sigma)$ is greater than or equal to the area of any surface in this family.  The entropy thus gives an upper bound for the width of this canonical homotopy class of sweepouts.   The min-max theory that we develop for the Gaussian area (Theorem \ref{main min-max theorem}) gives then a self-shrinker $\Gamma$ of area at most $\lambda(\Sigma)$  and genus $0$ (by Simon-Smith \cite{Sm82} the genus cannot grow).  By the classification of Brendle \cite{BR} this shrinker must be the plane, sphere or cylinder.  The crucial fact is that just as in the proof of the Willmore Conjecture, the boundary of our sweepout records the genus of $\Sigma$ and we can then use a topological degree argument to rule out getting the plane.   Thus we can show $\Gamma = \mathbb{S}^2$ and we obtain $F(\mathbb{S}^2)=\lambda(\mathbb{S}^2)\leq\lambda(\Sigma)$.
\\
\indent
We avoid many of the technical difficulties present in the proof of the Willmore Conjecture since we are able to use sweepouts where all surfaces have the same genus and vary smoothly (though as we will see they degenerate to planes or points as one approaches the boundary of the parameter space).   Nonetheless, carrying out the min-max construction for the Gaussian area is a subtle problem because the manifold is non-complete and the curvature blows up at infinity. We refer to Section \ref{Min-max theory for Gaussian area} for more details.

Throughout this paper, by \emph{plane} we will mean a plane passing through the origin.  An \emph{affine plane} is then a plane not necessarily passing through the origin.  (As we will see, the distinction between planes and affine planes is completely analogous to the distinction between great spheres and geodesic spheres that appears throughout the proof of the Willmore conjecture)
\\

The paper is organized as follows. In Section \ref{Min-max theory for Gaussian area}, we introduce our min-max theory for Gaussian area. Then the paper is separated into two parts. In Part \ref{entropy bound} (including \S \ref{canonical family} and \S \ref{min-max argument and proof of the main theorem}), we use the min-max theory to prove the entropy bound theorem. In Part \ref{min-max for Gaussian area} (including \S \ref{preliminaries} \S \ref{overview of the proof of min-max theorem} \S \ref{An important vector field to deform the delta-mass} \S \ref{Constructing tightening vector field} and \S \ref{Construction of tightening map}), we give the proof for the min-max theory for Gaussian area.
\\

{\bf Acknowledgements}:  We would like to thank Toby Colding and Bill Minicozzi for their encouragement and several helpful conversations. D.K. is also grateful to Fernando Marques and Jacob Bernstein for their interest in this work. X. Z. would like to thank Detang Zhou for stimulating conversations which led to this project.


\section{Min-max theory for Gaussian area}\label{Min-max theory for Gaussian area}

In this section, we introduce the min-max theory for the Gaussian area. The min-max theory was originally developed by F. Almgren \cite{AF62, AF65} and J. Pitts \cite{P81} as a Morse-theoretical method for the purpose of constructing closed embedded minimal hypersurfaces in a closed Riemannian manifold. The heuristic idea behind Almgren-Pitts' work is to associate to every non-trivial 1-cycle in the space of hypersurfaces a critical point of the area functional, i.e. a minimal hypersurface.  One advantage of the Almgren-Pitts min-max theory is that it does not depend on the topology of the ambient space, and hence works in any closed manifold. This is especially useful when the ambient manifold does not support any stable minimal hypersurfaces, where minimization methods fail. The Simon-Smith theory (c.f. \cite{Sm82, CD}), is a later variant of the min-max theory specific to three manifolds and simplifies many of the complications in Almgren-Pitts caused by geometric measure theory, and also leads to a control of the genus for the minimal surfaces obtained (c.f. \cite{Sm82, DP10, K13}). Since self-shrinkers are unstable minimal surfaces for the Gaussian metric $\frac{1}{4\pi}e^{-\frac{|x|^2}{4}}dx^2$ in $\R^3$, any variational construction of self-shrinkers must be of the min-max type. In the following, we will develop a min-max theory for the Gaussian metric using the setup of the Simon-Smith theory (following the exposition of Colding-De Lellis \cite{CD}).

There are obvious difficulties to overcome as the ambient manifold considered here is non-compact and the metric behaves badly near infinity. To give a general idea of our strategy, let us first recall the four main ingredients in the Colding-De Lellis theory (based on the work of Simon-Smith \cite{Sm82}). The first is the so-called ''tightening" process, which is a pseudo-gradient flow of the area functional on the space of varifolds (generalized hypersurfaces, c.f. \cite[\S38]{Si83}). The second is a delicate combinatorical argument which leads to the existence of an ''almost minimizing" varifold.  The third ingredient is the regularity of almost minimizing varifolds.  The fourth ingredient is to control the genus of the almost minimizing varifold.  All the arguments related to the second, third and fourth ingredient are essentially local, so one can adapt them trivially to the Gaussian space. The first ingredient however is subtle here as the space of varifolds in $\R^3$ with bounded Gaussian area is no longer compact. In particular, a sequence of surfaces may weakly converge to a limit surface together with a point mass at infinity. To overcome this difficulty, we compactify $\R^3$ by adding a point at infinity to get the three sphere $S^3$, and view all the varifolds with bounded Gaussian area as varifolds on $S^3$. Then we make use of the special structure of the Gaussian metric to design a specific pseudo-gradient flow of the Gaussian area functional $F$ (\ref{Gaussian area}) in the space of varifolds on $S^3$. Our flow will either push a varifold to be $F$-stationary in $\R^3$, or decrease the mass near infinity. After applying this flow, all the sequence of surfaces of our interests will converge to a varifold stationary under the $F$ functional in $\R^3$ with no point mass at infinity, and hence fulfill our requirement.

Our work is the first instance of a global variational theory in a non-compact incomplete manifold. Instead of working by exhaustions as in Schoen-Yau's proof of the Positive Mass Conjecture \cite{SY79}, we work with the whole non-compact space and the surfaces therein. Before our work, R. Montezuma \cite{Mo14} developed a min-max theory in certain non-compact manifolds. Unfortunately, his method cannot be adapted to our setting. Firstly, our Gaussian space has a very bad end, which does not satisfy Montezuma's technical condition \cite[$*_k$-condition on page 1]{Mo14} near infinity. In addition, Montezuma's theorem essentially works in a compact manifold by cutting out the infinite end and eventually producing closed minimal surfaces.  In our case we need to work in the whole space, and the min-max surface we produce may in general be non-compact (see Example 2).

\vspace{2em}
Now we start with the setup. Our ambient manifold will be $\R^3$  equipped with the Gaussian metric $g^G=\frac{1}{4\pi}e^{-\frac{|x|^2}{4}}dx^2$.  A two dimensional surface $\Si$ in $\R^3$ (not necessarily compact), 
which is a critical point of the Gaussian area $F$ is called a {\em Gaussian minimal surface} or {\em self-shrinker}. Denote by $\Diff$ the identity component of the diffeomorphism group of $\R^3$. Let $\Js$ be the set of smooth isotopies, i.e. $\Js$ contains $\psi\in C^{\infty}([0, 1]\times \R^3, \R^3)$, such that $\psi(0, \cdot)=id$ and $\psi(t, \cdot)\in\Diff$ for all $t$.  Denote
\begin{itemize}
\vspace{-5pt}
\addtolength{\itemsep}{-0.7em}
\setlength{\itemindent}{0em}
\item $I^n=[0, 1]^n$ by the n-dimensional closed cube;

\item $\mathring{I}^n=(0, 1)^n$ by the n-dimensional open cube;

\item $\partial I^n=I^n\backslash \mathring{I}^n$.
\vspace{-5pt}
\end{itemize}
$I^n$ will be our parameter space in the following.  Denote by $\mathcal{Z}_2(\R^3)$ the space of surfaces in $\R^3$.
\begin{definition}\label{continuous family}
A family $\{\Si_\nu\}_{\nu\in I^n}$ of (smooth, two dimensional) surfaces in $\R^3$ is said to be a {\em continuous (genus $g$) family}, if
\begin{itemize}
\vspace{-5pt}
\addtolength{\itemsep}{-0.7em}
\setlength{\itemindent}{0em}
\item $\{\Si_{\nu}\}$ is a smooth family under the locally smooth topology;
\item For $t\in (0,1)^n$, $\{\Si_{\nu}\} $ is a genus $g$ surface
\item $\sup_{\nu\in I^n}F(\Si_{\nu})<\infty$;
\item $F(\Si_{\nu})$ is a continuous function of $\nu$.
\item $\{\Si_{\nu}\}_{\nu\in\partial I^n}$ contains only affine planes or empty sets.
\end{itemize}
\end{definition}

Given a continuous family $\{\Si_{\nu}\}_{\nu\in I^n}$, we can generate new continuous families by the following procedure. Denote $id: \R^3\rightarrow\R^3$ by the identity map. Take a map $\psi\in C^{\infty}(I^n\times \R^3, \R^3)$, such that $\psi(\nu, \cdot)\in\Diff$ for each $\nu\in I^n$. 
Define $\{\Si_{\nu}^{\pr}\}$ by $\Si^{\pr}_{\nu}=\psi(\nu, \Si_{\nu})$.
\begin{remark}
In general, $\{\Si^{\pr}_{\nu}\}_{\nu\in I^n}$ might not satisfy the requirement of Definition \ref{continuous family}. However, assuming that the set $\{\nu\in I^n: \psi(\nu, \cdot)\neq id\}$ is a compact subset of $\mathring{I}^n$, then in the following two cases which will be used later, $\{\Si_{\nu}^{\pr}\}$ does satisfy Definition \ref{continuous family}.
\begin{itemize}
\vspace{-5pt}
\addtolength{\itemsep}{-0.7em}
\setlength{\itemindent}{0em}
\item $\big(\psi(\nu, \cdot)-id\big)$ all have compact support, i.e. $\cup_{\nu\in I^n} spt\big(\psi(\nu, \cdot)-id\big)$ is a compact subset of $\R^3$;
\item $\psi(\nu, \cdot)$ is the time $1$ flow generated by smooth n-parameter family of vector fields $X_{\nu}: \R^3\rightarrow\R^3$, $\nu\in I^n$ with $\max_{\nu\in I^n}\|X_{\nu}\|_{C^1}\leq C$ (c.f. Lemma \ref{vector fields with C1 bound preserve continuous family}).
\end{itemize}
\end{remark}
A continuous family $\{\Si^{\pr}_{\nu}\}$ satisfying Definition \ref{continuous family} is said to be {\em homotopic to} $\{\Si_{\nu}\}$ if $\{\Si^{\pr}_{\nu}\}$ is gotten from $\{\Si_{\nu}\}$ under the above operation. A set of $\La$ of continuous families is a {\em saturated set} of $\{\Si_{\nu}\}$ (or a {\em homotopic class} of $\{\Si_{\nu}\}$) if any $\{\Si^{\pr}_{\nu}\}\in\La$ is homotopic to $\{\Si_{\nu}\}$.
\begin{remark}
By our definition,  all our continuous families agree on $\partial I^n$.
\end{remark}

Given a family $\{\Si_{\nu}\}\in\La$, denote $\F(\{\Si_{\nu}\})$ by the maximal Gaussian area of its slices,
\begin{equation}
\F(\{\Si_{\nu}\})=\max_{\nu\in I^n}F(\Si_{\nu}).
\end{equation}
The {\em min-max value}, denoted by $W(\La)$, is the infimum of $\F$ taken over all families in $\La$,
\begin{equation}
W(\La)=\inf_{\{\Si_{\nu}\}\in \La}\big[\max_{\nu\in I^n}F(\Si_{\nu})\big].
\end{equation}
If a sequence $\{\{\Si_{\nu}\}^k\}\subset \La$ satisfy $\lim_{k\rightarrow\infty}\F(\{\Si_{\nu}\}^k)=W(\La)$, we say the sequence a {\em minimizing sequence}. Let $\{\{\Si_{\nu}\}^k\}$ be a minimizing sequence and $\{\nu_k\}$ a sequence of parameters. If $\lim_{k\rightarrow\infty}F(\Si^k_{\nu_k})=W(\La)$, then we say $\{\Si^k_{\nu_k}\}$ a {\em min-max sequence}.

The main result we need (and proved in Part \ref{min-max for Gaussian area}) is the following:
\begin{theorem}\label{main min-max theorem}
For any saturated set $\La$ of genus $g$ families, if $W(\La)>\max_{\nu\in\partial I^n}F(\Si_{\nu})$, then there is a min-max sequence of $\La$ converging in the sense of varifolds to a connected, smooth, embedded, Gaussian minimal surface with Gaussian area $W(\La)$ (counted with multiplicity) and with genus at most $g$.
\end{theorem}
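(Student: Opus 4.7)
The plan is to run the Simon--Smith/Colding--De~Lellis scheme adapted to the Gaussian metric, following the four--ingredient outline described in Section~\ref{Min-max theory for Gaussian area}. First I would compactify the ambient space by adding a point $\infty$ to obtain $S^3$, and regard every surface with finite Gaussian area as a $2$-varifold on $S^3$. On $S^3$ the space of varifolds with $F$-mass bounded by a constant is compact in the weak topology, so any minimizing sequence $\{\{\Sigma_\nu\}^k\}\subset \Lambda$ has a subsequence such that for every $\nu$ the slices $\Sigma^k_\nu$ converge to some varifold $V_\nu$ on $S^3$, possibly carrying an atomic mass at $\infty$.

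The core construction is the tightening map. I would build a continuous map $\Psi$ on the space of continuous families which (i) leaves the boundary slices $\{\Sigma_\nu\}_{\nu\in \partial I^n}$ fixed, (ii) produces a family in the same saturation class $\Lambda$, and (iii) strictly reduces $F(\Sigma_\nu)$ at every slice that is neither $F$-stationary in $\mathbb{R}^3$ nor supported at $\infty$, with quantitative improvement controlled by the $F$-first-variation. On the $\mathbb{R}^3$ part this is done by the usual Almgren/Pitts recipe: associate to a non-stationary varifold $V$ a compactly supported vector field $X_V$ with $\delta_F V(X_V)<-c$ and glue such fields continuously in $\nu$ using a partition of unity on the parameter space, then flow for a short time. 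The genuinely new difficulty, and the main obstacle, is killing any atomic component at $\infty$: the radial field $X=-\chi(|x|)x$ for a cutoff $\chi$ supported outside a large ball has $F$-first-variation dominated by $-\tfrac12\int \chi(|x|)|x|^2\,e^{-|x|^2/4}\,d\mathcal H^2 + (\text{tangential correction})$, and the Gaussian weight makes the negative term overwhelm anything supported far from the origin. This is the content of the special vector field constructed in Section~\ref{An important vector field to deform the delta-mass}. Combining compactly supported fields for interior non-stationarity with this ``sweep inward'' field yields a continuous tightening map that either makes slices $F$-stationary in $\mathbb{R}^3$ with no mass at $\infty$, or strictly decreases their $F$-mass.

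After tightening, standard combinatorial/pull-tight arguments yield a min-max sequence $\{\Sigma^k_{\nu_k}\}$ converging to an $F$-stationary varifold $V_\infty$ of total mass $W(\Lambda)$, supported entirely in $\mathbb{R}^3$ and almost minimizing in small annuli in the sense of Pitts. The hypothesis $W(\Lambda)>\max_{\nu\in\partial I^n}F(\Sigma_\nu)$ ensures $\nu_k$ stays in the interior of $I^n$, so $V_\infty$ is not one of the boundary affine planes and is genuinely a critical value. Because almost--minimality, regularity, and connectedness are purely local assertions in a smooth metric, the Colding--De~Lellis regularity and connectedness proofs go through verbatim on geodesic balls of $(\mathbb{R}^3,g^G)$: one gets that $V_\infty$ is a connected, smooth, embedded self-shrinker with integer multiplicity and Gaussian area $W(\Lambda)$.

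Finally, the genus bound follows from the Simon--Smith/De~Lellis--Pellandini/Ketover lower--semicontinuity of genus under smooth varifold convergence: the slices $\Sigma^k_{\nu_k}$ have genus at most $g$, and since convergence on any precompact subset of $\mathbb{R}^3$ is smooth away from finitely many points (possibly with multiplicity), the standard topological surgery argument shows the smooth limit surface has genus at most $g$. The only step requiring real new ideas is the tightening; everything downstream is a direct localization of known arguments once the compactification trick and the inward radial field handle the bad end of the Gaussian metric.
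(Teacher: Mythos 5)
Your overall architecture (compactify to $S^3$, pull-tight with a partition of unity, almost-minimizing via the local combinatorial argument, Colding--De Lellis regularity, Simon--Smith/Ketover genus bound via exhaustion) is the same as the paper's. But the one step you yourself identify as ``the only step requiring real new ideas'' --- the vector field that kills mass near $\infty$ --- is wrong as stated. With the Gaussian first variation $\de_G V(X)=\int\big(div_S X-\tfrac12\lan X,\vec{x}\ran\big)\,dV^G$, your inward field $X=-\chi(|x|)x$ gives $div^G_S X=-2\chi-r\chi'|\nabla_S r|^2+\tfrac12\chi|x|^2$, which is \emph{positive} (of order $+|x|^2/2$) far out, not $-\tfrac12\chi|x|^2$ as you claim: sweeping distant mass inward \emph{increases} Gaussian area, because the exponential growth of the weight toward the origin dominates the quadratic loss of Euclidean area. (Concretely, for the sphere of radius $R$ one has $F=R^2e^{-R^2/4}$, maximized at $R=2$, so contracting a large sphere increases $F$.) The correct mechanism is the opposite: push mass \emph{outward} and let the Gaussian decay kill it, which is exactly what the paper's field $X=\phi(r/\rho)\,\vec{x}/r^2$ does, with $div^G_S X=\tfrac{2|\nabla^\perp r|^2}{r^2}-\tfrac12\to-\tfrac12$ at infinity.

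Even after fixing the sign, the particular normalization matters and your linear-growth cutoff field does not suffice. An outward field $\chi(|x|)x$ has $div^G_S X\approx -|x|^2/2$, which is unbounded, so it does not extend to a Lipschitz (or even bounded) function on $G_2(S^3)$; then $V\mapsto\de_G V(X)$ is not continuous in the $\bF_{S^3}$ metric, the first variation of a varifold with atomic mass at $\infty$ is not even finite, and the field violates the uniform $C^1$ bound needed so that the generated isotopies keep the deformed family inside the saturated set $\La$ (Lemma \ref{vector fields with C1 bound preserve continuous family}). The $1/r$ decay in $\vec{x}/r^2$ is chosen precisely so that $\|X\|_{C^1}\le 1$, $div^G_S X$ is Lipschitz on $G_2(S^3)$, and the push-forward extends to varifolds with mass at $\infty$ with Jacobian $e^{-t/2}$ there, which is what makes the quantitative tightening (uniform $c_j$, flow times $T_j$, continuity of $V\mapsto H_V$) go through. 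Two smaller points: connectedness of the limit is not ``verbatim'' Colding--De Lellis but follows from a Frankel-type theorem for positive Bakry--\'Emery Ricci curvature (Remark \ref{remark for regularity for a.m. surface}), and since the limit shrinker may be non-compact the genus bound must be run on an exhaustion $B_{R_i}(0)$, as in Section \ref{overview of the proof of min-max theorem}.
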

\noindent
Let us first consider several instances of Theorem \ref{main min-max theorem} which will be useful later.  Denote by $\mathcal{Z}_2(\mathbb{R}^3)$ the set of closed embedded but possibly trivial surfaces in $\mathbb{R}^3$.
\\
\\
\noindent
\textbf{Example 1:} Consider the one parameter sweepout of $\mathbb{R}^3$ by parallel affine planes: $$\Phi:[0,1]\rightarrow\mathcal{Z}_2(\mathbb{R}^3)$$ defined by $$\Phi(t):=\{(x,y,z)\in\mathbb{R}^3\:|\: z=\tan(\pi (t-1/2)\}$$  Of course $\Phi(0)=0$ and $\Phi(1)=0$.  Denote by $\Lambda_\Phi$ the collection of sweepouts that is a saturation of this sweepout.  We claim 
\begin{equation}\label{or}
W(\Lambda_{\Phi})=1.
\end{equation}  Moreover the width is achieved by the self-shrinking plane $\{z=0\}$ and the sweepout $\Phi$ is therefore optimal. To see \eqref{or}, observe that  by the definition of width we have $W(\Lambda_\Phi)\leq 1$.  Moreover, if $0<W(\Lambda_\Phi)<1$, then the Min-Max Theorem would produce a self-shrinker with entropy smaller than $1$,  an impossibility.  So it remains to show $0<W(\Lambda_\Phi)$.  We can rule this out using the isoperimetric inequality in Gaussian space, which says that affine planes are the isoperimetric surfaces:
\begin{lemma}(Isoperimetric Theorem in Gaussian Space \cite{B},\cite{ST})\label{iso}
For a Borel set in $\mathbb{R}^n$ of Gaussian volume $V$, $\partial V$ has Gaussian measure at least that of the affine plane bounding a volume $V$.
\end{lemma}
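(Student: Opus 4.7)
The plan is to derive the Gaussian isoperimetric inequality from the classical spherical isoperimetric inequality by a limiting argument, following Sudakov-Tsirelson and Borell. The starting point is Poincar\'e's observation that the standard Gaussian measure $\gamma_n$ on $\mathbb{R}^n$ is the weak limit, as $N \to \infty$, of the pushforward under the orthogonal projection $\pi_n \colon \mathbb{R}^N \to \mathbb{R}^n$ of the normalized surface measure $\sigma_N$ on the sphere $S^{N-1}(\sqrt{N})$. This makes the Gaussian space accessible as a ``dimension-free'' limit of spheres, where isoperimetry is already known.

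First I would recall the L\'evy-Schmidt spherical isoperimetric inequality: on $S^{N-1}(\sqrt{N})$, among Borel sets of prescribed $\sigma_N$-measure, geodesic balls minimize the $\sigma_N$-measure of their $\varepsilon$-enlargements. Given a Borel set $E \subset \mathbb{R}^n$ with $\gamma_n(E) = V$, I would lift it to the cylinder $\widetilde{E}_N = \pi_n^{-1}(E) \cap S^{N-1}(\sqrt{N})$. By Poincar\'e's lemma, $\sigma_N(\widetilde{E}_N) \to V$, and the spherical cap centered at the north pole with the same $\sigma_N$-measure projects, in the limit, onto an affine half-space $H \subset \mathbb{R}^n$ with $\gamma_n(H) = V$. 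Applying spherical isoperimetry to $\widetilde{E}_N$ and passing to the limit gives
\[
\gamma_n(E_\varepsilon) \geq \gamma_n(H_\varepsilon) \quad \text{for every } \varepsilon > 0,
\]
where $E_\varepsilon$ denotes the Euclidean $\varepsilon$-neighborhood. Differentiating at $\varepsilon = 0^+$ in the sense of Minkowski content converts this into the perimeter inequality stated in the lemma.

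The step I expect to be the trickiest is making the limit fully rigorous: one must control $\varepsilon$-enlargements uniformly in the ambient dimension $N$ while ensuring that the distortion introduced by $\pi_n$ (whose image metric on $S^{N-1}(\sqrt{N})$ degenerates as $N \to \infty$) does not spoil the inequality. It is cleanest to establish the enlargement inequality above first, and only afterwards differentiate to recover the perimeter statement; for Borel sets with irregular boundary one would work with De Giorgi's functional perimeter in Gaussian space and approximate $E$ by sets with smooth boundary, using lower semicontinuity of Gaussian perimeter under $L^1$-convergence of characteristic functions. A completely different route that sidesteps the limit altogether is Ehrhard symmetrization, in which one replaces each vertical slice of $E$ by a half-line of equal one-dimensional Gaussian measure and verifies directly that this operation preserves $\gamma_n$-volume while non-increasing Gaussian perimeter, with half-spaces being the only fixed sets.
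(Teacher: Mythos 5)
The paper does not prove this lemma at all: it is quoted as a classical result with citations to Borell \cite{B} and Sudakov--Tsirel'son \cite{ST}, and your sketch is precisely their original argument (Poincar\'e's limit of uniform measures on $S^{N-1}(\sqrt{N})$ combined with L\'evy--Schmidt spherical isoperimetry, then differentiation of the enlargement inequality), so you are reproducing the proof of the cited sources rather than diverging from anything in the paper. Your outline is correct in substance, with the genuinely delicate points (uniform control of enlargements as $N\to\infty$ and passing from enlargements to perimeter for rough Borel sets) correctly identified as the places needing care.
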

\noindent
Any element of the homotopy class $\Lambda_{\Phi}$ must contain a surface $\Sigma$ that bounds a region $R$ of volume $1/2$.  By the Isoperimetric Theorem \ref{iso}, we get $F(\Sigma)\geq 1$.  Thus we have shown \eqref{or}.
\\
\\
The folllowing example will also be useful later on:
\\
\\
\noindent
\textbf{Example 2:} Consider the sweepout of $\mathbb{R}^3$ given by $2$-spheres:
$$\Phi(t):=\{(x,y,z)\in\mathbb{R}^3\:|\:x^2+y^2+z^2 = \tan(\pi t/2)\}.$$
Let $\Lambda_{\Phi}$ be the homotopy class generated by this sweepout. We want to show that in this case too we have $W(\Lambda_\Phi)=1$ and thus the sweepout by spheres, although it contains a self-shrinker, is inefficient. One way to prove this would be to argue that the min-max limit produced has Morse index $1$, and so must be a plane (where the one negative eigenfunction is translation normal to the plane).  Instead we will argue as above.  Indeed, by the isoperimetric argument we know that $0<W(\Lambda_\Phi)$.   So again $W(\Lambda_\Phi)\geq 1$.  To see that equality is achieved, consider for $\tau>0$ the new translated and dilated family $\Phi'(t)=\tau(\Phi(t)-(1,0,0))$ . As $\tau$ approaches infinity, in any fixed ball about the origin, the sweepout surfaces $\Phi'(t)$ converge to a foliation by affine planes, and thus the maximal area of a slice approaches $1$.  This confirms that $W(\Lambda_\Phi)=1$.
\begin{remark}
Note that the optimal sweepout (Example 1) for the saturated family of Example 2 is not contained in the saturation of Example 2.  
\end{remark}

\part{Entropy bound}\label{entropy bound}

\section{Canonical Family}\label{canonical family}
Given a smooth closed embedded surface $\Sigma$ of genus $g$ in $\mathbb{R}^3$ we will construct a $4$-parameter continuous family of surfaces associated to it. 
 For any $t\in\mathbb{R}^3$ and $s\in\mathbb{R}_{\geq 0}$ we define a surface:
\begin{equation}
\Sigma_{t,s}:= s(\Sigma-t).
\end{equation}
The surface $\Sigma_{t,s}$ corresponds to translating the point $t$ to the origin and dilating by a factor $s$.   Thus associated to $\Sigma$ we have a $4$-parameter sweepout:
\begin{equation}
\Pi_{\Sigma}:\mathbb{R}^3\times\mathbb{R}_{\geq 0}\rightarrow\mathcal{Z}_2(\mathbb{R}^3)
\end{equation}
\begin{equation}
\Pi_{\Sigma}(t,s):=\Sigma_{t,s}
\end{equation}
We want to first understand our sweepout as we approach the boundary of $\mathbb{R}^3\times\mathbb{R}_{\geq 0}$.   For any $t\in\Sigma$, we have
\begin{equation}
\lim_{s\rightarrow\infty}\Sigma_{t,s}=T_t\Sigma(0).
\end{equation}
Here $T_t\Sigma(0)$ denotes the tangent plane of $\Sigma$ translated to pass through the origin.
For any $t\in\R^3\setminus\Sigma$ we have:
\begin{equation}
\lim_{s\rightarrow\infty}\Sigma_{t,s}=0.
\end{equation}
Also for any $t$, 
\begin{equation}
\lim_{s\rightarrow 0}\Sigma_{t,s}=0.
\end{equation}
Also for any fixed $s$, by the compactness of $\Sigma$ we have
\begin{equation}
\lim_{|t|\rightarrow\infty}\Sigma_{t,s}=0.
\end{equation}
All of the surfaces in our sweepout have the same genus and vary smoothly, though toward the boundary (in parameter space) they begin to degenerate to points or planes.  Notice also that for our initial sweepout, since entropy is the supremum over all centers and scales, the entropy controls all the Gaussian areas of the sweepout:
\begin{equation}\label{areabound}
\sup_{t,s} F(\Sigma_{t,s})\leq\lambda(\Sigma).
\end{equation}
This inequality is fundamental in the proof of Theorem \ref{main} (and is analogous to the fact that the Willmore energy controls the areas of the canonical five parameter sweepout discovered by Marques-Neves).

From $\Sigma$ we have constructed a sweepout $\Pi_{\Sigma}$ where at the "boundary" of $\mathbb{R}^3\times\mathbb{R}_{\geq 0}$ the sweepout  is either an oriented plane through the origin or the zero surface.  We get a plane precisely when restricted to $\Sigma\times\{\infty\}$.  This plane coincides with the Gauss map of $\Sigma$.  Precisely, 
\begin{equation}
\Pi_\Sigma(\cdot ,\infty):\Sigma\rightarrow\mathbb{S}^2 \text{  is the Gauss map.}
\end{equation}
\noindent
The degree of the Gauss map is $1-g$.  Thus if $\Sigma$ is not diffeomorphic to a two-torus, then the Gauss map has non-zero degree.  Just as in the proof of the Willmore conjecture, it is extremely important that our sweepout at the boundary encodes the geometry of the surface $\Sigma$. This will be essential in the degree argument to rule out our min-max limit becoming a plane.  

\subsection{Boundary blow-up}
At first glance, our sweepout seems to be discontinuous at the top face $\mathbb{R}^3\times\{\infty\}$ because it consists of either planes or the zero surface.  It turns out however that depending on the angle of approach to a point in  $\Sigma\times\{\infty\}$ one actually gets all the affine planes extrapolating between these two extremes.  We must do a blow-up argument as in the proof of the Willmore Conjecture to capture all the limits and make our canonical family continuous.  In the end we will produce a new family from the original one where in a tubular neighborhood around $\Sigma\times\{\infty\}$ in parameter space, the surfaces die off via affine planes approaching infinity (in Gaussian space these planes have area approaching zero).  This will give a good canonical family to which the Min-Max theorem can be applied.  
\\
\indent
To perform the blowup, we first observe that if there is a balance between the scaling factor and distance to $t\in\Sigma$ as the parameters approach the boundary of the sweepout, and we approach $t$ at a constant angle, then we get an affine plane:
\begin{lemma}\label{silly}
If  $(a_i,s_i)\rightarrow (t,\infty)$ with $t\in\Sigma$ and for some $C>0$ we have  $|a_i-t|s_i\rightarrow C$, and  $a_i-t=-|a_i-t|v_i$  where $v_i\rightarrow v$ for some $v\in\mathbb{S}^2$ then
\begin{equation}
\lim_{i\rightarrow\infty}\Sigma_{a_i,s_i}=T_t(\Sigma)(0)+Cv
\end{equation}
\end{lemma}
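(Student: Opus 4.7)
The plan is to reduce the problem to the standard rescaling of a smooth surface at one of its points, combined with a translation coming from the offset $a_i - t$. Set $\epsilon_i = |a_i - t|$, so the hypotheses become $\epsilon_i \to 0$, $s_i \to \infty$, $s_i\epsilon_i \to C$, and $v_i \to v$, with $a_i = t - \epsilon_i v_i$. The first step is to split the rescaling map into two pieces:
\begin{equation*}
s_i(x - a_i) = s_i(x - t) + s_i(t - a_i) = s_i(x - t) + (s_i \epsilon_i)\, v_i.
\end{equation*}
The second summand is a translation independent of $x$ that converges to $Cv$, while the first is the standard blow-up of $\Sigma$ at the point $t \in \Sigma$.

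Next I would analyze the blow-up $s_i(\Sigma - t)$. Since $\Sigma$ is smoothly embedded and $t \in \Sigma$, a neighborhood of $t$ in $\Sigma$ can be written as the graph of a smooth function $h : U \to \mathbb{R}$ over a neighborhood $U$ of the origin in $T_t\Sigma$, with $h(0) = 0$ and $Dh(0) = 0$. Writing points of $\Sigma$ near $t$ as $x = t + u + h(u)\mathbf{n}$ for $\mathbf{n}$ a unit normal to $\Sigma$ at $t$, we obtain
\begin{equation*}
s_i(x - t) = s_i u + s_i h(u)\mathbf{n}.
\end{equation*}
Substituting $w = s_i u$ gives $s_i h(w/s_i)\mathbf{n}$, and since $h(u) = O(|u|^2)$ this tends to zero uniformly on compact subsets of $T_t\Sigma$. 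Hence, on any compact set $K \subset \mathbb{R}^3$, the portion of $s_i(\Sigma - t)$ lying in $K$ converges smoothly to the plane $T_t\Sigma(0)$. Meanwhile, any point of $\Sigma$ at a fixed positive distance from $t$ is sent outside $K$ by the rescaling once $i$ is large enough, so such points contribute nothing in the limit.

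Combining both observations, for any compact $K \subset \mathbb{R}^3$,
\begin{equation*}
\Sigma_{a_i, s_i} \cap K = \bigl(s_i(\Sigma - t) + s_i \epsilon_i v_i\bigr) \cap K \longrightarrow \bigl(T_t\Sigma(0) + Cv\bigr) \cap K
\end{equation*}
in the smooth topology, which immediately gives varifold convergence in $\mathbb{R}^3$. The main potential subtlety is controlling mass at infinity for the Gaussian area, but this is handled automatically by the exponential weight $e^{-|x|^2/4}$: the portion of $s_i(\Sigma - t)$ lying outside a large ball has arbitrarily small Gaussian area uniformly in $i$, since the limiting affine plane $T_t\Sigma(0) + Cv$ has finite Gaussian area and the rescaled tails of $\Sigma$ are pushed off to infinity. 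This yields the claimed limit $T_t\Sigma(0) + Cv$.
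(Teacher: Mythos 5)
Your proof is correct and follows essentially the same route as the paper: the paper's argument is exactly the one-line decomposition $s_i(\Sigma-a_i)=s_i(\Sigma-t)+s_i|a_i-t|v_i$, with the blow-up converging to $T_t\Sigma(0)$ and the translation converging to $Cv$. You have merely filled in the standard graph-coordinate justification of the blow-up limit, which the paper takes for granted.
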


\noindent
\emph{Proof:}  We simply compute 
\begin{equation}
\lim_{i\rightarrow\infty} s_i(\Sigma-a_i)=\lim_{i\rightarrow\infty}s_i(\Sigma-t+|a_i-t|v_i)=T_t\Sigma(0)+\lim_{i\rightarrow\infty}s_i(|a_i-t|v_i)=T_t\Sigma(0) + Cv
\end{equation}
\qed
\\
\indent
We will now explain how to do the blowup at the boundary of our sweepout.  It is convenient first to reparameterize the parameter space of the scaling parameter by an explicit homeomorphism:
\begin{equation}
h:[0,\infty)\rightarrow [0,1) \mbox{ where   }  h(t)=(2/{\pi})\tan^{-1}(t).
\end{equation}
Thus our sweepout $\Pi_\Sigma$ can now be considered as maps from $\mathbb{R}^3\times [0,1)$ to $\mathcal{Z}_2(\mathbb{R}^3)$ by setting $\Pi_\Sigma(p,t):=\Pi_\Sigma(p,h^{-1}(t))$.  The goal of this section is then to extend our sweepout $\Pi_{\Sigma}$ continuously to $\mathbb{R}^3\times [0,1]$.  

First let us endow $\mathbb{R}^3\times [0,1]$ with the product metric and consider $\Omega_{\epsilon}$ defined to be the $\epsilon$-tubular neighborhood of $\Sigma\times\{1\}$ inside $\mathbb{R}^3\times [0,1]$.  In other words, 
$$\Omega_{\epsilon}=\{x\in\mathbb{R}^3\times [0,1]: |x-(p,1)| < \epsilon \mbox{ for some } p\in\Sigma\}.$$   
The following lemma explains what the end result of our  blow-up procedure will be.  The sweepout is untouched away from $\Sigma\times\{1\}$ but in a neighborhood $\Omega_\epsilon$ of $\Sigma\times\{1\}$ it consists of affine planes that converge to planes as points in $\Omega_\epsilon$ converge to $\Sigma\times\{1\}$.  Also it is crucial that we keeep the Gaussian areas in our sweepout still bounded by the entropy $\lambda(\Sigma)$.  In the following lemma, let $R$ denote the nearest point projection onto $\Sigma$ in $\mathbb{R}^3$.  Denote by $T_\epsilon(\Sigma)$ the tubular neighborhood about $\Sigma$ in $\mathbb{R}^3$.  For suitably small $\epsilon>0$, $R$ maps $T_\epsilon(\Sigma)$ to $\Sigma$.

\begin{lemma}\label{boundary blow-up lemma}(Boundary Blow-Up Lemma)
Given $$\Pi_{\Sigma}:\mathbb{R}^3\times [0,1)\rightarrow\mathcal{Z}_2(\mathbb{R}^3)$$ that is obtained from reparameterizing the canonical family associated to $\Sigma$ as above, for any $\epsilon>0$ small enough we can produce a new sweepout  $$\bar{\Pi}_{\Sigma}:\mathbb{R}^3\times [0,1]\rightarrow\mathcal{Z}_2(\mathbb{R}^3)$$ such that the following properties hold:
\begin{itemize}
\addtolength{\itemsep}{-0.5em}
\setlength{\itemindent}{0em}
\item $\{\bar{\Pi}_\Sigma(x): x\in\R^3\times[0, 1]\}$ is a continuous family in the sense of Definition \ref{continuous family},
\item $\bar{\Pi}_\Sigma(x) = \Pi_\Sigma(x)$ for $x\in\mathbb{R}^3\times[0,1)\setminus\Omega_{2\epsilon}$,
\item For $x\in\bar{\Omega}_\epsilon$, $\bar{\Pi}_\Sigma(x)$ is an affine plane,

\item For $(p,1)\in\Sigma\times\{1\}$, $\bar{\Pi}_\Sigma(p,1)$ is $T_p(\Sigma)(0)$ (i.e. the tangent plane $T_p(\Sigma)$ translated to the origin),
\item For $p\in\Sigma$, the surfaces associated to the line segment $R^{-1}(p)\times\{1\}$ restricted to $T_\epsilon(\Sigma)\times\{1\}$ consist of all affine planes parallel to $T_p\Sigma(0)$,
\item $\sup_{x\in\mathbb{R}^3\times [0,1]} F(\bar{\Pi}_\Sigma(x))\leq\lambda(\Sigma)$.
\end{itemize}
\end{lemma}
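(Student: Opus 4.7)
The plan is to perform a blowup construction on a tubular neighborhood $\Omega_{2\epsilon}$ of $\Sigma\times\{1\}$, replacing the singular limiting behavior of $\Pi_\Sigma$ with an explicit affine-plane family, while using only translations and dilations of $\Sigma$ elsewhere, so that the entropy bound is automatic. For $\epsilon>0$ small, the nearest-point projection $R:T_{3\epsilon}(\Sigma)\to\Sigma$ together with the signed normal coordinate $r(x)=\langle x-R(x),n_{R(x)}\rangle$ gives a diffeomorphism $T_{3\epsilon}(\Sigma)\cong\Sigma\times(-3\epsilon,3\epsilon)$. A point of $\Omega_{2\epsilon}$ may then be written as $(p+rn_p,\tau)$ with $\rho:=\sqrt{r^2+(1-\tau)^2}<2\epsilon$, and $\Sigma\times\{1\}$ is identified with the locus $\rho=0$.

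First I would define $\bar\Pi_\Sigma$ on $\bar\Omega_\epsilon$ by an explicit assignment of affine planes parallel to $T_p\Sigma$. Choose a continuous function $H$ on the closed half-disk $\{(r,\tau):r^2+(1-\tau)^2\leq\epsilon^2,\ \tau\leq 1\}$ with values in $\mathbb{R}\cup\{\pm\infty\}$ such that $H(0,1)=0$, the restriction $r\mapsto H(r,1)$ is a continuous odd bijection from $(-\epsilon,\epsilon)$ onto $\mathbb{R}$ with $H(\pm\epsilon,1)=\mp\infty$, and $H$ is finite away from the two corners; a convenient choice is $H(r,1)=-\tan(\pi r/(2\epsilon))$, extended continuously to the interior of the half-disk. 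Then set $\bar\Pi_\Sigma(p+rn_p,\tau):=T_p\Sigma(0)+H(r,\tau)\,n_p$ inside $\bar\Omega_\epsilon$, with an infinite value of $H$ interpreted as the empty surface. The required behavior at $(p,1)$ and along $R^{-1}(p)\times\{1\}\cap T_\epsilon(\Sigma)\times\{1\}$ is then immediate by construction.

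Next, on the annular region $\Omega_{2\epsilon}\setminus\bar\Omega_\epsilon$ (where $\epsilon\leq\rho\leq 2\epsilon$) I would interpolate by prescribing a path $(a(x),s(x))$ in parameter space and setting $\bar\Pi_\Sigma(x)=s(x)(\Sigma-a(x))$. At the outer boundary $\rho=2\epsilon$ I take $(a,s)=(p+rn_p,h^{-1}(\tau))$, so that $\bar\Pi_\Sigma$ matches $\Pi_\Sigma$ there. As $\rho\to\epsilon^+$ I want $(a(x),s(x))$ to be chosen so that $a(x)\to p$ along the direction $-\mathrm{sgn}(H(r,\tau))\,n_p$ and $s(x)\,|a(x)-p|\to |H(r,\tau)|$; Lemma \ref{silly} then forces $s(x)(\Sigma-a(x))\to T_p\Sigma(0)+H(r,\tau)n_p$, which is precisely the affine plane assigned on $\partial\Omega_\epsilon$ in the previous step. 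Concretely one can keep $a(x)=p+\tilde r(x)n_p$ moving along the normal direction and let $s(x)$ increase monotonically to $+\infty$ as $\rho\searrow\epsilon$, with the product $s(x)\tilde r(x)$ prescribed by a smooth cutoff so that the continuity requirements at both boundaries are met.

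The final step is verification. The Gaussian area bound is automatic: in $\bar\Omega_\epsilon$ each slice is an affine plane, with $F\leq 1\leq\lambda(\Sigma)$; in the annulus and outside $\Omega_{2\epsilon}$ each slice is a genuine translate-dilate $s(\Sigma-a)$ of $\Sigma$, so $F\leq\lambda(\Sigma)$ directly from the definition \eqref{definition of entropy} of entropy. Continuity of $\bar\Pi_\Sigma$ inside $\bar\Omega_\epsilon$ and outside $\bar\Omega_{2\epsilon}$ is clear, and continuity across $\partial\Omega_\epsilon$ is exactly the content of Lemma \ref{silly} applied to the path just constructed. The main obstacle in fleshing out the details is synchronizing the interpolation $(a(x),s(x))$ on the annulus so as to be simultaneously continuous at both $\rho=\epsilon$ and $\rho=2\epsilon$ and compatible with the angular prescription of the height function $H$; this is a blowup-replacement bookkeeping step directly analogous to the corresponding construction in the Marques--Neves canonical family on $\mathbb{S}^3$.
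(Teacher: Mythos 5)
Your proposal is correct and follows essentially the same route as the paper: you identify the limiting affine planes via Lemma \ref{silly} (the balance $s\,|a-p|\to C$ along normal directions), fill $\bar{\Omega}_\epsilon$ with the parallel affine planes $T_p\Sigma(0)+H\,n_p$, and interpolate on $\Omega_{2\epsilon}\setminus\bar{\Omega}_\epsilon$ using genuine translate-dilates, so the entropy bound is automatic. Your explicit height function $H$ and path $(a(x),s(x))$ are just a coordinate version of the paper's blow-up map $B$ reparameterizing the foliating curves $\rho_C(\tau)=C\cot(\tfrac{\pi}{2}\tau)$, so the two arguments coincide in substance.
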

\noindent
\emph{Proof:}
First we will parameterize $\Omega_{2\epsilon}$ sitting inside $\mathbb{R}^3\times [0,1]$.  The set $\Omega_{2\epsilon}$ is diffeomorphic to $\Sigma\times D_+$ where $D_+$ is a half-disk. Fix $p\in\Sigma$.  Our parameters $(\tau, \rho)$ live on a small half disk $D_+$ of radius $2\epsilon$  in $\mathbb{R}^2$ centered around the point $(1,0)$ in the region in $\mathbb{R}^2$ where $\tau<1$.  The $\rho$ parameter ($-2\epsilon\leq\rho\leq 2\epsilon$) is the signed distance in $\mathbb{R}^3$ from a point to $p$ (where the negative values are taken outside $\Sigma$).  The $\tau$ parameter is the scaling parameter in the vicinity of $1$ (so $1-2\epsilon\leq\tau\leq 1$.  For given $p\in\Sigma$, denote $\Sigma_{\tau, \rho}=\bar{\Pi}_{\Sigma}(p+\rho n(p), \tau)$ (where $n(p)$ is the outward-pointing unit normal vector in $\mathbb{R}^3$ to $\Sigma$ at $p$). By Lemma \ref{silly} we see that if for some constant $C$,  the equality $\rho(t)\tan(\frac{\pi}{2}\tau(t))=C$ holds as  $\rho(t)\rightarrow 0$ and $\tau(t)\rightarrow 1$ we get a well-defined limit surface in our sweepout: 
$$\lim_{\tau\rightarrow 1, \rho\rightarrow 0} \Sigma_{\tau, \rho}= T_p(\Sigma)(0)+Cn(p).$$   Thus by varying $C$ among all real numbers we obtain a family of functions $\rho_C(\tau)$ given by $\rho_C(\tau)=C cot(\frac{\pi}{2}\tau)$ that foliates the set $D_+$ and so that all members of this family pass through two points on the boundary of the half-disk: they each pass through the point $(1,0)$ and through one (varying in $C$) point along the curved half-circle part of the boundary of $D_+$.  In other words, approaching $(p,1)$ along any such curve gives us a unique limit surface and the union of such curves foliates the parameter space $D_+$. \\
\indent
Now we can follow Marques-Neves \cite{MN12} rather closely to do the blowup which captures this one-parameter family of limits.  We first construct a blowup map $B$ which is a continuous map (not one-to-one though) from $\mathbb{R}^3\times [0,1]$ to itself which takes $\mathbb{R}^3\times [0,1]\setminus\Omega_{\epsilon}$ to all of $\mathbb{R}^3\times [0,1]$. The map is constructed so that if a point $x$ hits $\partial\Omega_{\epsilon}$ at $(\tau(p), \rho(p))$ then $B(x)$ maps to the limit achieved as we approach $\Sigma$ along the direction given by the unique foliating curve intersecting $(\tau(p), \rho(p))$ in $D_+$.  Precisely we construct a continuous map $B:\mathbb{R}^3\times [0,1]\rightarrow\mathbb{R}^3\times [0,1]$ such that
\begin{itemize}
\addtolength{\itemsep}{-0.5em}
\setlength{\itemindent}{0em}
\item $B$ is the identity map in $(\mathbb{R}^3\times [0,1))\setminus\bar{\Omega}_{2\epsilon}$
\item $B$ maps $\Omega_{2\epsilon}\setminus\bar{\Omega}_\epsilon$ homeomorphically onto $\Omega_{2\epsilon}$ 
by reparameterizing the curves $\rho_C(\tau)$.

\end{itemize}
We then define a new family $\bar{\Pi}_\Sigma$ defined on all of $\mathbb{R}^3\times [0,1]$ so that for $v\in\mathbb{R}^3\times [0,1)\setminus\bar{\Omega}_\epsilon$, we set
\begin{equation}
\bar{\Pi}_\Sigma(v) = \Pi_{\Sigma}(B(v)).
\end{equation}
We then further extend $\bar{\Pi}_\Sigma$ in the region $\Omega_\epsilon$ to be constant along the line segments starting at any point of the form $v=(p,y)\in\partial\Omega_{\epsilon}$  and ending at $(p,1)$.

Our new canonical family is now extended to all of $\mathbb{R}^3\times [0,1]$ and for $p\in\Sigma$ we still have that $\bar{\Pi}_\Sigma(p,1)= T_p(\Sigma)(0)$. Also it is easy to see that our new family $\bar{\Pi}_\Sigma$ is a continuous family in the sense of Definition \ref{continuous family} (although the parameter space is non-compact, we can still compactify it in a straightforward way). If we consider a tubular neighborhood of radius $\epsilon$ around $\Sigma$ (in $\mathbb{R}^3$) and pick a point $p\in\Sigma$ and move $x$ from $p$ normally in $\mathbb{R}^3$, then $\bar{\Pi}_{\Sigma}(x,1)$ varies through all possible affine planes parallel to $T_p(\Sigma)(0)$.\qed
\indent


\section{Min-max argument: Proof of the Main Theorem}\label{min-max argument and proof of the main theorem}
\noindent
\emph{Proof of Theorem \ref{main}:   }
Let $\Sigma$ be a closed two-sphere in $\R^3$.  We can first of all assume that $\lambda(\Sigma)<3/2$ (otherwise the Main Theorem is trivial since $3/2\geq\lambda(\mathbb{S}^2)$).
We will run a min-max argument for the Gaussian area functional on all sweepouts of $\mathbb{R}^3$ in the saturation of our initial sweepout $\bar{\Pi}_\Sigma$ (the canonical family made continuous in the previous section).  Denote by $\Lambda_{\Sigma}$ the collection of all sweepouts obtained by saturating $\bar{\Pi}_\Sigma$.  It follows from the definition of width that the width of $\Lambda_\Sigma$ is at most $\lambda(\Sigma)$.  Since the areas of the surface on the boundary of sweepouts in $\Lambda_\Sigma$ are at most 1, if we can show 
\begin{equation}\label{lowerboundwidth}
W(\Lambda_\Sigma)>1,
\end{equation} then the Min-Max Theorem for Gaussian Area \ref{main min-max theorem} will produce a self-shrinker, $\tilde{\Sigma}$  realizing the width.  So by \eqref{areabound} we obtain
\begin{equation}\label{2}
1<F(\tilde{\Sigma})\leq\lambda(\Sigma)<3/2.
\end{equation}
But for a self-shrinker, the entropy is realized by the Gaussian area so in fact \eqref{2} implies
\begin{equation}\label{done}
1<\lambda(\tilde{\Sigma})\leq\lambda(\Sigma)<3/2.
\end{equation}
 By Theorem \ref{main min-max theorem}, the genus of $\tilde{\Sigma}$ is $0$.  Thus by Brendle \cite{BR}, $\tilde{\Sigma}$ is either a sphere, cylinder or plane.   In each case the multiplicity must be one (by the bound of $3/2$ on $\lambda(\Sigma)$).  The cylinder is ruled out because it has entropy bigger than $3/2$.  The plane is ruled out because its entropy is $1$.  Thus we must have $\tilde{\Sigma}=\mathbb{S}^2$ which yields the inequality $\lambda(\Sigma)\geq\lambda(\mathbb{S}^2),$ and we are done.  It remains to show \eqref{lowerboundwidth}.  

\begin{proposition}\label{big}
If the genus of $\Sigma$ is not $1$, then $W(\Lambda_{\Sigma}) > 1$.
\end{proposition}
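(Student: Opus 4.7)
The plan is a topological degree argument in the spirit of Marques--Neves' proof of the Willmore conjecture, exploiting that the Gauss map of $\Sigma$ has nonzero degree $1-g$ when $g\neq 1$. I would argue by contradiction: if $W(\Lambda_\Sigma)\le 1$, then for every $\epsilon>0$ one can pick a sweepout $\Phi_\epsilon\in\Lambda_\Sigma$ with $\sup_x F(\Phi_\epsilon(x))\le 1+\epsilon$. Since $\Phi_\epsilon$ is homotopic to $\bar\Pi_\Sigma$ via isotopies supported in a compact subset of the open interior of the parameter space $P=\R^3\times[0,1]$, the restriction of $\Phi_\epsilon$ to $\partial P$ and to spatial infinity coincides with that of $\bar\Pi_\Sigma$.

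I would introduce the Gaussian enclosed-volume function $V:P\to[0,1]$, with $V(x)$ equal to the Gaussian measure of the compact region bounded by the closed embedded surface $\Phi_\epsilon(x)$. Because the family is smooth, $V$ is smooth, and from the boundary data of $\bar\Pi_\Sigma$ one reads off: $V=0$ on the bottom face, at spatial infinity, and on the top face outside $\Sigma$ and away from the blow-up tube; $V=1$ on the top face inside $\Sigma$ and away from the tube; and across the blow-up tube $V$ interpolates continuously through $(0,1)$, hitting $1/2$ exactly on $\Sigma\times\{1\}$. By Sard's theorem, choose a regular value $v_0\in(0,1/2)$ of $V$ so close to $1/2$ that $1+\epsilon-f(v_0)<\eta$, where $f(v_0)=e^{-d(v_0)^2/4}$ is the Gaussian area of a half-space bounding Gaussian volume $v_0$ and $\eta>0$ is a small threshold fixed below. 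Then $M_0:=V^{-1}(v_0)\subset P$ is a compact smooth $3$-manifold-with-boundary (compact since $V=0$ at spatial infinity), whose boundary meets $\partial P$ only in the outside half of the blow-up tube on the top face, along the parallel surface $\Sigma_{v_0}=\{p+\rho_0 n_\Sigma(p):p\in\Sigma\}$ canonically diffeomorphic to $\Sigma$ via the normal exponential map.

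The heart of the argument is to construct a continuous map $n:M_0\to S^2$ recording the outward unit normal of the best-approximating half-space. For every $x\in M_0$, $\Phi_\epsilon(x)$ bounds Gaussian volume exactly $v_0$ and has Gaussian area at most $1+\epsilon$, so its Gaussian isoperimetric deficit is at most $\eta$. For $\eta$ sufficiently small, a quantitative Gaussian isoperimetric inequality (Cianchi--Fusco--Maggi--Pratelli, Barchiesi--Brancolini--Julin, Eldan, Mossel--Neeman) forces the enclosed region to be close in Gaussian symmetric difference to a unique half-space $H_{n(x),d(v_0)}$, yielding the desired continuous $n(x)\in S^2$. On $\Sigma_{v_0}$, the sweepout is by the Boundary Blow-Up Lemma together with Lemma \ref{silly} an affine plane parallel to $T_p\Sigma(0)$, and the outward normal of its enclosed half-space is precisely $n_\Sigma(p)$; hence $n|_{\Sigma_{v_0}}$ is the Gauss map of $\Sigma$ under the identification $\Sigma_{v_0}\cong\Sigma$, of degree $1-g$.

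Finally, applying Stokes' theorem to the pullback by $n$ of the standard area form $\omega$ on $S^2$ (which satisfies $d\omega=0$) gives
\[
4\pi(1-g) \;=\; \deg(n|_{\Sigma_{v_0}})\cdot 4\pi \;=\; \int_{\Sigma_{v_0}} n^*\omega \;=\; \int_{\partial M_0} n^*\omega \;=\; \int_{M_0} d(n^*\omega) \;=\; 0,
\]
contradicting $g\neq 1$ and forcing $W(\Lambda_\Sigma)>1$. The hardest step will be verifying that $n$ is genuinely continuous and single-valued on all of $M_0$; this requires a sharp quantitative Gaussian isoperimetric inequality together with a careful coupling of $v_0$ to $\epsilon$ so that the isoperimetric deficit is uniformly less than $\eta$ across $M_0$.
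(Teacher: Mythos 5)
Your route is genuinely different from the paper's, and in outline it is viable. The paper also argues by contradiction, but it detects near-planar slices dynamically: using Lemma \ref{sweep} and the Example 1 analysis together with the min-max machinery (Proposition \ref{pull tight}, Proposition \ref{existence of a.m. sequence}, Theorem \ref{regularity for a.m. surface}) it shows every one-parameter sub-sweepout $s\mapsto\Phi_i(t,s)$ must contain a slice varifold-close to a plane; it then takes the connected component $A(i)$ of parameters whose slices stay far from planes, approximates its boundary by a handlebody $H'(i)$, and extends the reduced Gauss map over $H'(i)$ into $\mathbb{RP}^2$ by a fine triangulation and geodesic convexity, contradicting Lemma \ref{rp2} (a $\pi_1$/lifting argument that genuinely uses that $H'(i)$ is a handlebody). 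You instead pin the near-planar slices with the enclosed Gaussian volume: on the level set $M_0=V^{-1}(v_0)$ the isoperimetric deficit is automatically small, stability of the Gaussian isoperimetric inequality produces an oriented normal $n:M_0\rightarrow \mathbb{S}^2$ restricting to the Gauss map on $\partial M_0$, and the contradiction is the elementary fact that a map of degree $1-g\neq 0$ on the boundary of a compact oriented $3$-manifold cannot extend over it. What this buys: you avoid invoking the min-max regularity theory inside this proposition, the enclosed region orients the normal so you work in $\mathbb{S}^2$ and need no lifting lemma, and you never need $M_0$ to be a handlebody. What it costs: you import Gaussian isoperimetric stability, whereas the paper only uses the sharp inequality (Lemma \ref{iso}); note, though, that the qualitative stability statement (compactness of sets of finite Gaussian perimeter plus uniqueness of half-spaces in the equality case) suffices for your purposes.

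Several steps must not be left as assertions. (i) $V$ need not be smooth: the blown-up family of Lemma \ref{boundary blow-up lemma} is only claimed to be a continuous family, and competitors modify it by isotopies, so "Sard plus transversality to the top face" requires either a regularization of $V$ or a reformulation of the level-set step homologically; relatedly, $\partial M_0$ need not be a graph over $\Sigma$, but since competitors agree with $\bar{\Pi}_\Sigma$ on $\partial P$ (and, as in the Remark after Definition \ref{continuous family}, near it), the top-face level set is homologous to $\Sigma$ inside the tube, the nearest-point projection to $\Sigma$ has degree one, and the boundary degree is still $\pm(1-g)$. (ii) Inside the blow-up tube at interior parameters a competitor's slice can be a non-compact isotoped plane, so "the compact region bounded by" must be replaced by a continuously chosen side of finite Gaussian measure; this is possible because the parameter space is simply connected and the choice is determined near $\partial P$ via Lemma \ref{silly}. (iii) The continuity and single-valuedness of $n$, which you rightly flag as the hard point, has a clean fix: take $n(x)$ to be (minus) the direction of the Gaussian barycenter of the enclosed region, which is continuous in $x$, is non-vanishing on $M_0$ by stability, and equals $n_\Sigma(p)$ at the boundary planes; alternatively use the same geodesically-convex selection the paper uses on $\mathcal{P}$. (iv) Since $n$ is only continuous, replace the Stokes computation by the statement that $[\partial M_0]=0$ in $H_2(M_0;\mathbb{Z})$, or smooth $n$ first. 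With these repairs your argument closes and constitutes an independent proof of Proposition \ref{big}.
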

\begin{remark}
Since Proposition \ref{big} holds for all closed surfaces with genus not equal to $1$, one can apply the argument of Theorem \ref{main} to any closed surface with genus not $1$. If one knew that the shrinker $\tilde{\Sigma}$ that arises in the proof had index at most $4$ (since it arises from a $4$ parameter sweepout) one can use Colding-Minicozzi's classification of such shrinkers instead of Brendle's result for genus $0$, to obtain $\lambda(\Sigma)\geq\lambda(\mathbb{S}^2)$ for such surfaces.  Recently F. Marques and A. Neves have announced upper index bounds for min-max limits in the smooth setting.  It is very likely their argument applies in the Gaussian min-max setting too.  In their proof of the Willmore conjecture, Marques-Neves \cite{MN12} get around the issue of index bounds for their 5 parameter sweepout by first considering the minimal surface of {\it smallest area} and its canonical family.  Unfortunately this trick does not work here as the smallest area self-shrinker may well be non-compact.  It is not so clear how to adapt the degree argument when the surface is non-compact.
\end{remark}
\begin{remark}
It is a curious fact that the degree argument fails when the genus is $1$.  In the Willmore conjecture, the degree argument fails in the case where the genus is zero -- however in that setting all (even immersed) minimal two-spheres were classified earlier by F. Almgren \cite{AF66} and known to be the equatorial two-spheres.  There is as of yet however no such classification for genus $1$ self-shrinkers.  We know so far only of the rotationally symmetric Angenent torus and the genus $1$ surface with two ends discovered numerically by Chopp \cite{C} and described further in Ilmanen \cite{I}.
\end{remark}
\noindent
\subsection{Proof of Proposition \ref{big}}

Denote by $H$ the component of $\mathbb{R}^3\setminus\Sigma$ which is an open handlebody.  We will argue by contradiction and show that if Proposition \ref{big} were false, the Gauss map $G$ defined on $\Sigma$ would extend to a continuous map defined on $H$.   But the Gauss map on a genus $g$ surface has degree $1-g$, and it follows from basic topology that no such extension of $G$ to the handlebody $H$ can exist if $g\neq 1$.  We shall need the following extension of this, where we consider our Gauss map as a map into the projective plane $\mathbb{RP}^2$ rather than $\mathbb{S}^2$:
\begin{lemma}\label{rp2}
Let $H$ be a closed handlebody in $\mathbb{R}^3$ with boundary a surface $\Sigma$ of genus $g$.  For $g\neq 1$, the (reduced) Gauss map $\tilde{G}:\Sigma\rightarrow\mathbb{RP}^2$ cannot extend to a continuous map defined on all of $H$.
\end{lemma}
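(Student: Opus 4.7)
The plan is to argue by contradiction using covering space theory. Suppose $\tilde{G}$ extends to a continuous map $F:H\to\mathbb{RP}^2$. Pulling back the orientation double cover $p:S^2\to\mathbb{RP}^2$ under $F$ produces a double cover $q:\tilde{H}\to H$, classified by some $\alpha\in H^1(H;\mathbb{Z}/2)$. Its restriction to $\Sigma=\partial H$ is the pullback $\tilde{G}^*(p)$, which is trivial because $\tilde{G}$ already lifts to the honest Gauss map $G:\Sigma\to S^2$. Hence $i^*\alpha=0$ in $H^1(\Sigma;\mathbb{Z}/2)$, where $i:\Sigma\hookrightarrow H$ is the inclusion.

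The crucial topological input is then that $i^*:H^1(H;\mathbb{Z}/2)\to H^1(\Sigma;\mathbb{Z}/2)$ is injective. This follows from Lefschetz duality for the compact $3$-manifold with boundary $(H,\Sigma)$: one has $H^1(H,\Sigma;\mathbb{Z}/2)\cong H_2(H;\mathbb{Z}/2)$, and the latter vanishes since the handlebody $H$ deformation retracts onto a wedge of $g$ circles. The long exact sequence of the pair then yields the injectivity of $i^*$. Consequently $\alpha=0$, so $q$ is a trivial cover and $F$ lifts to a continuous $\tilde{F}:H\to S^2$. Selecting the sheet of the lift for which $\tilde{F}|_\Sigma=G$, we obtain a continuous extension of the Gauss map $G$ to all of $H$.

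Computing the class $G_*[\Sigma]\in H_2(S^2;\mathbb{Z})$ in two ways now produces the contradiction. On the one hand, $G=\tilde{F}\circ i$ factors through $H_2(H;\mathbb{Z})=0$ (using again that $H$ retracts onto a $1$-complex), so $G_*[\Sigma]=0$. On the other hand, by the Gauss-Bonnet theorem $G_*[\Sigma]=(1-g)[S^2]$, which forces $g=1$ and contradicts the hypothesis $g\neq 1$. The step I expect to require the most care is the injectivity of $i^*$ together with the clean identification of double covers of $H$ and of $\Sigma$ with classes in $H^1(-;\mathbb{Z}/2)$; once that cornerstone is in place the remainder of the argument is formal and involves no input from the Gaussian metric or the min-max construction.
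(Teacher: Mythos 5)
Your proof is correct and follows essentially the same route as the paper: assume the extension $F:H\to\mathbb{RP}^2$ exists, show it must lift to $S^2$ agreeing with the Gauss map $G$ on $\Sigma$, and then contradict the facts that $\deg G = 1-g\neq 0$ while $[\Sigma]$ bounds in $H$ (so any map extending over $H$ has degree $0$). The only cosmetic difference is how the lifting obstruction is killed: you use Lefschetz duality and the long exact sequence to get injectivity of $i^*:H^1(H;\mathbb{Z}/2)\to H^1(\Sigma;\mathbb{Z}/2)$, whereas the paper uses the surjectivity of $i_*:\pi_1(\Sigma)\to\pi_1(H)$ for a handlebody; these are equivalent inputs and both are fine.
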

\noindent
\emph{Proof:}  Since the map $\tilde{G}:\Sigma\rightarrow\mathbb{RP}^2$ factors through $\mathbb{S}^2$ and $\pi_1(\mathbb{S}^2)$ is trivial, the induced map  $\tilde{G}_*:\pi_1(\Sigma)\rightarrow\pi_1(\mathbb{RP}^2)$ is trivial.  There also exists a natural surjective map $i_*:\pi_1(\Sigma)\rightarrow\pi_1(H)$.  
 If  there existed a map $E:H\rightarrow\mathbb{RP}^2$ which is an extension of $\tilde{G}$ to $H$, then $\tilde{G}_*=E_*\circ i_*=0$, and since $i_*$ is surjective, this implies $E_*=0$.  Since the induced map $E_*$ on $\pi_1$ from $H$ to $\mathbb{RP}^2$ vanishes, this means that $E$ lifts to a map from $H$ to $\mathbb{S}^2$ agreeing with $\tilde{G}$ on $\Sigma$.  But this is impossible.\qed
\\

We will also need the following simple observation in the degree argument. If we consider rescalings about points "inside" of a genus g handlebody then we sweep out all of the ambient space:
\begin{lemma}\label{sweep} For any fixed $t\in H$, the one-parameter sweepout by dilates of $\Sigma$, $\{s(\Sigma-t)\}_{s\in[0,1]}$ sweeps out all of $\mathbb{R}^3$ (i.e. the width of the homotopy class of any saturation of such sweepouts is greater than $0$).
\end{lemma}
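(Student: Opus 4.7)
The plan is to apply the Gaussian isoperimetric inequality (Lemma \ref{iso}) to every family in the saturation $\Lambda$ of $\{s(\Sigma-t)\}_{s\in[0,1]}$, imitating the isoperimetric arguments in Examples 1 and 2. First I would reparametrize the dilation factor so that $s\in[0,1]$ exhausts the full range $s\in[0,\infty]$ (e.g., via $s\mapsto\tan(\pi s/2)$), with $s=0$ giving the point at the origin and $s=1$ giving the empty slice at infinity, and verify that the reparametrized family satisfies Definition \ref{continuous family}.

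Next I would track the Gaussian volume $V(s)$ of the bounded component $R(s)$ of $\R^3\setminus s(\Sigma-t)$, which for the original family equals the Gaussian volume of $s(H-t)$. Because $t$ lies in the open handlebody $H$, every $p\in\R^3$ satisfies $p/s+t\to t\in H$ as $s\to\infty$, so $s(H-t)$ exhausts $\R^3$; dominated convergence gives $V(s)\to 2\sqrt{\pi}$, the total Gaussian volume of $\R^3$, while $V(0)=0$. Now let $\{\Sigma'_s\}$ be any family in the saturation $\Lambda$. By the saturation construction, the generating isotopies have support away from $\partial I^1=\{0,1\}$, so $\Sigma'_s=s(\Sigma-t)$ for $s$ near $0$ and $1$; each interior slice is an embedded $2$-sphere bounding a unique topological ball $R'(s)$ by Schoenflies, and the Gaussian volume $V'(s)$ of $R'(s)$ is continuous on $[0,1]$ with $V'(0)=0$ and $V'(1)=2\sqrt{\pi}$. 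The intermediate value theorem produces $s_\ast\in(0,1)$ with $V'(s_\ast)=\sqrt{\pi}=\tfrac12\mathrm{Vol}_G(\R^3)$.

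Finally, Lemma \ref{iso} yields that the boundary of a set of Gaussian volume $\sqrt{\pi}$ has Gaussian area at least that of the isoperimetric extremal, namely the affine plane through the origin, of area exactly $1$. Hence $F(\Sigma'_{s_\ast})\geq 1$, and taking the infimum over $\Lambda$ gives $W(\Lambda)\geq 1>0$. The main obstacle is the careful bookkeeping around the degenerate boundary slices: ensuring the enclosed-volume function $V'$ really attains the full range $[0,2\sqrt{\pi}]$ for every saturated family. This is where the endpoint-fixing convention of the saturation (the generating isotopies are the identity near $\partial I^1$) together with the open/bounded structure of $H-t$ is essential; it guarantees that the "sweeps out all of $\R^3$" property is a genuine topological invariant of the homotopy class.
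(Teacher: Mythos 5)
Your proposal is correct and is essentially the paper's own proof: the paper simply cites ``the isoperimetric argument from Example 1,'' i.e.\ exactly your argument that some slice of any family in the saturation must enclose half the total Gaussian volume (the boundary slices are fixed by the saturation, so the enclosed-volume function runs from $0$ to the full volume and the intermediate value theorem applies), whence Lemma \ref{iso} gives a slice of Gaussian area at least $1$ and so $W(\Lambda)\geq 1>0$. One cosmetic remark: since the lemma is also used for surfaces of arbitrary genus in Proposition \ref{big}, you should replace the appeal to Schoenflies by the fact that every slice is an ambient-diffeomorphic image of $s(\Sigma-t)$ and hence bounds a region diffeomorphic to $s(H-t)$ (or just invoke Jordan--Brouwer separation), which is all the volume bookkeeping requires.
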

\begin{proof}
This is a straightforward consequence of the isoperimetric argument from Example 1.
\end{proof}
\noindent
\emph{Proof of Proposition \ref{big}:}   We first explain the idea.  We argue by contradiction, so assume $W(\Lambda_\Sigma)=1$.  This implies that we have a sequence of sweepouts $\Phi_i$ in the saturated family $\Lambda_\Sigma$ with maximal Gaussian areas approaching 1 from above.  For any $t\in H$, the one parameter family of surfaces (in $s$) $[0,1]\rightarrow\Phi_i(t,s)$ also sweeps out $\mathbb{R}^3$ by Lemma \ref{sweep} and since the maximal areas are approaching $1$, there must be some $s$ so that $\Phi_i(t,s)$ is very close to a signed plane.  Thus for each $t\in H$ we essentially can produce a signed plane.  The choice of plane will depend continuously on $t$ thus giving us a continuous map from $H$ to $\mathbb{S}^2$ that extends the Gauss map $G$ defined on the boundary.   This is impossible when $g\neq 1$ by Lemma \ref{rp2}. \\
\indent We now give the detailed argument.  Assume toward a contradiction that we have a sequence of sweepouts $\Phi_i$ with maximal Gaussian areas approaching $1$. Moreover, we can assume that $\{\Phi_i\}$ is a tightened sequence as in Lemma \ref{pull tight}.  Denote by $\delta$ the small parameter used in the boundary blowup argument.  Note that the surfaces associated to parameters in $\R^3\times\{0\}$ and $(\R^3\setminus T_\delta(\Sigma))\times\{1\}$ are the trivial surfaces.  Denote by $\Sigma^+_\epsilon$ the component of the boundary of the tubular neighborhood of radius $\epsilon$ about $\Sigma$ in $\R^3$ that is inside $H$, and by $\Sigma^-_\epsilon$ the other component.  Also denote by $H^+_\epsilon$ the handlebody bounded by $\Sigma^+_{\epsilon}$ and let $H^-_\epsilon$ be $\R^3$ minus the handlebody bounded by $\Sigma^-_\epsilon$. It is convenient in the following to put a rotationally symmetric metric $dist$ on the space of $2$-varifolds in $\R^3$ with bounded Gaussian mass (the precise such metric will be constructed in Section \ref{notation}, c.f. the $\bF_{S^3}$ metric on $\V_2^G(\R^3)$). Denote by $\mathcal{P}$ the space of planes in $\R^3$.  \\
\indent  For each $i$ let $\bar{A}(i)$ be the set of $x\in \R^3\times [0,1]$ with the property that $dist(\Phi_i(x),\mathcal{P})\geq\epsilon$.  Note that for $\epsilon$ sufficiently small, $\bar{A}(i)$ contains $\R^3\times\{0\}$ and intersects the top face precisely in $(H^+_{\epsilon'}\cup H^-_{\epsilon'}) \times\{1\}$ for some $\epsilon'=\epsilon'(\epsilon)$ (where $\epsilon'\rightarrow 0$ as $\epsilon\rightarrow 0$). Fix $\epsilon$ so small that $\epsilon'\leq\delta$, where $\delta$ is again the parameter used in the boundary blowup argument.  Let $A(i)\subset\bar{A}(i)$ be the connected component containing the part of the top face containing $H^+_{\epsilon'}\times\{1\}$.  \\
\indent Our first claim is that for $i$ large enough, $A(i)$ can never intersect the bottom face $\R^3\times\{0\}$.   Indeed, if the claim were false, we would have a sequence of continuous paths $\gamma_i:[0,1]\rightarrow A(i)$ that begin in $H^+_{\epsilon'}\times\{1\}$ and end in $\R^3\times\{0\}$.  There are two cases to consider, depending on whether $\gamma_i$ begins in $H^+_\delta$ or $H^+_{\epsilon'}\setminus H^+_\delta$.  In the first case, since we are assuming $$\lim_{i\rightarrow\infty}\sup_x F(\Phi_i(x))=1$$ and since by Lemma \ref{sweep} each path $\gamma_i$ sweeps out  $\mathbb{R}^3$, we know that $\{\ga_i\}$ is also a tightened sequence of the homotopy class described in Example 1 of \S \ref{Min-max theory for Gaussian area} in the sense of Lemma \ref{pull tight}. Applying Proposition \ref{existence of a.m. sequence} and Theorem \ref{regularity for a.m. surface} and the discussion in Example 1 of \S \ref{Min-max theory for Gaussian area} leads to the fact that for $i$ large there must exist a $t_i$ so that $\Phi(\gamma(t_i))$ is within $\epsilon/2$ of the space of planes $\mathcal{P}$, contradicting the definition of $A(i)$.  In the second case where $\gamma_i$ begins in $(H^+_{\epsilon'}\setminus H^+_\delta)\times\{1\}$, the surfaces do not sweep-out all of $\R^3$ but rather the side of a half-space of larger area, and thus some slice must also approach a self-shrinking plane in this case as well.  Thus the claim is established.  For any fixed $i$, it also cannot happen that the projection of  $A(i)$ onto the $\R^3$ factor is unbounded, because as $|t|\rightarrow\infty$, the surfaces in our sweepout must converge to trivial surfaces in order to be in the saturation $\Lambda_\Sigma$ of the canonical family $\bar{\Pi}_\Sigma$.

Therefore, since $A(i)$ only intersects the top face, we can approximate $\partial A(i)$ by a smooth handlebody $H'(i)\subset\R^3\times [0,1]$ with boundary $\Sigma^+_{\epsilon'}\times\{1\}$ so that all surfaces associated to $H'(i)$ (for $i$ large) are within $2\epsilon$ and $\epsilon/2$ of $\mathcal{P}$ in the metric $dist$.    We now construct the desired extension of $\tilde{G}$ to $H'(i)$.  Let $\eta$ be chosen so that all balls of radius $\eta$ in $\mathbb{RP}^2$ are geodesically convex (i.e., there exists a unique and minimizing geodesic between any two points).  First observe (c.f. Section 9.10 in \cite{MN12}) that we can find a $C>0$ so that
\begin{equation}\label{cont}
\mbox{If } P_1,P_2\in\mathcal{P}\mbox{  satisfy  } dist(P_1,P_2) < C\eta, \mbox {  then  }  dist_{\mathbb{RP}^2}(P_1,P_2)\leq\eta.
\end{equation}

Given two surfaces, $S_1$ and $S_2$ satisfying $dist(S_1,S_2)\leq\epsilon$ and $dist(S_1,\mathcal{P})\leq 2\epsilon$ and \\$dist(S_2,\mathcal{P})\leq 2\epsilon$, it follows by the triangle inequality that any choice of nearest point projections in $\mathcal{P}$, $P_1$ for $S_1$ and $P_2$ for $S_2$ are within $5\epsilon$ of each other in the metric $dist$.  Let $\epsilon$ now be chosen so small so that $5\epsilon\leq C\eta$. We obtain from \eqref{cont} that $dist_{\mathbb{RP}^2}(P_1,P_2)\leq \eta$.  Thus while nearby surfaces in our sweepout may have multiple nearest point projections to $\mathcal{P}$, they can all be chosen to lie in a geodesically convex neighborhood in $\mathbb{RP}^2$.  

We now explain how to build the map $\tilde{G}$ from $H'(i)$ to $\mathbb{RP}^2$ extending the Gauss map.  Recall that every surface corresponding to points in $H'(i)$ lies in a $2\epsilon$ neighborhood of $\mathcal{P}$ (for $i$ large enough).  By continuity we can triangulate the handlebody $H'(i)$ so finely so that the surfaces corresponding to any two adjacent vertices of the triangulation are within $\epsilon$ in the metric $dist$.  For each vertex in the triangulation, define $\tilde{G}$ to be some choice of nearest point projection to $\mathcal{P}$.  For vertices in the interior of the triangulation, it does not matter which nearest point projection one chooses.  For vertices at the boundary points $x\times\{1\}$ of $H'(i)$, choose the point in $\mathbb{RP}^2$ obtained by first retracting $x\in \Sigma^+_{\epsilon'}$ to $R(x)\in \Sigma$  in $\mathbb{R}^3$ via nearest point projection and setting $\tilde{G}$ at $x\times\{1\}$ to be the Gauss map of $\Sigma$ at $R(x)$.

By construction, for any two vertices $v_1$ and $v_2$ in $H'(i)$, the corresponding planes $\tilde{G}(v_1)$ and $\tilde{G}(v_2)$ are contained in a geodesically convex ball in $\mathbb{RP}^2$.  We can thus extend $\tilde{G}$ along the edge $e_{12}$ in $H'(i)$ joining $v_1$ and $v_2$ via the unique minimizing geodesic in $\mathbb{RP}^2$ joining $v_1$ and $v_2$ (contained in this geodesically convex ball).    To extend to the $2$-skeleton, observe that for any three adjacent vertices $v_1$, $v_2$ and $v_3$, we have that  $\tilde{G}(v_2)$, and $\tilde{G}(v_3)$ are both within $\eta$ of $\tilde{G}(v_1)$.  Thus we can extend $\tilde{G}$ to the face $\mathcal{F}$ in $H'(i)$ determined by $v_1$, $v_2$ and $v_3$ by the interior of the corresponding triangle in the geodesic ball of $\mathbb{RP}^2$.  The same process can be repeated over the $3$-cells.  Thus we obtain iteratively a map $\tilde{G}:H'(i)\rightarrow\mathbb{RP}^2$ extending the Gauss map, contradicting Lemma \ref{rp2}.\qed


\part{Min-max for Gaussian area}\label{min-max for Gaussian area}


In the remaining sections, we prove the Min-max Theorem \ref{main min-max theorem} for the Gaussian Area, which has been the main ingredient in our arguments.

\begin{proof}
It is a direct corollary of Proposition \ref{pull tight}, Proposition \ref{existence of a.m. sequence}, Theorem \ref{regularity for a.m. surface} and Remark \ref{remark for regularity for a.m. surface}.
\end{proof}



\section{Preliminaries}\label{preliminaries}

\subsection{Notation}\label{notation}
We first list a few notations and definitions used in the following. For concepts in geometric measure theory, we mainly refer to \cite{Si83}.
\begin{itemize}
\vspace{-5pt}
\addtolength{\itemsep}{-0.7em}
\setlength{\itemindent}{0em}

\item $S^3$ and $\R^3$ denote the 3-dimensional standard sphere and Euclidean space respectively. Sometime we will view $S^3$ as the one point compactification $\R^3\cup\{\infty\}$. We will also identify $\R^3$ with $S^3\backslash\{\infty\}$.

\item $ds_0^2$ and $dx^2$ denote the round metric on $S^3$ and Euclidean metric on $\R^3$ respectively.

\item $G_2(S^3)$ and $G_2(\R^3)$ denote the Grassmannian bundle of un-oriented 2-planes over the tangent bundle $TS^3$ of $S^3$ or $T\R^3$ of $\R^3$ respectively (c.f. \cite[\S 38]{Si83}).

\item $Lip\big(G_2(S^3)\big)$ denotes the space of Lipschitz functions on $G_2(S^3)$ with respect to the induced metric by $ds_0^2$.

\item $\V_2(S^3)$ denotes the space of 2-varifolds on $S^3$, i.e. Radon measures on $G_2(S^3)$.

\item $(\R^3, g^G)$ denotes the Gaussian space, where $g^G=\frac{1}{4\pi}e^{-\frac{|x|^2}{4}}dx^2$.

\item $\X(\R^3)$ denotes the space of vector fields in $\R^3$. $\X_c(\R^3)$ denotes the subset of vector fields in $\X(\R^3)$ with compact support.

\item $\V_2^G(\R^3)$, or equivalently $\V_2(\R^3, g^G)$ denotes the space of $2$-varifolds in $\R^3$, with
$$\int_{\R^3}\frac{1}{4\pi}e^{-\frac{|x|^2}{4}}d\|V\|<\infty.$$

\item Given $V\in \V_2^G(\R^3)$, $V^G$ denotes $\Gau V$. We also view $V^G$ as the extension of $\Gau V$ to $\V_2(S^3)$ by defining $\|V^G\|(\{\infty\})=0$. Thus, $\V_2^G(\R^3)\subset \V_2(S^3)$.

\item $\bF_{S^3}$ denotes the $\bF$-metric on $\V_2(S^3)$ \cite[\S 2.1(19)]{P81}, i.e. given $V, W\in \V_2(S^3)$,
\begin{displaymath}
\begin{split}
\bF_{S^3}(V, W)=\sup\{ & \int_{G_2(S^3)}f(x, S)dV(x, S)-\int_{G_2(S^3)}f(x, S)dW(x, S),\\
                                      & f\in C^0\big(G_2(S^3)\big), |f|\leq 1, Lip(f)\leq 1\},
\end{split}
\end{displaymath}
where $Lip(f)$ is the Lipschitz constant of $f$ with respect to the induced metric on $G_2(S^3)$ by $ds_0^2$.

\item $U_r(V)$ denotes the ball in $\V_2(S^3)$ with respect to $\bF_{S^3}$ metric with center $V\in\V_2(S^3)$ and radius $r>0$.

\end{itemize}


\subsection{First variation of 2-varifolds in $\V_2(\R^3, g^G)$}

Given a $C^1$ map $f: \R^3\rightarrow \R^3$, the {\em Jacobian} of $f$ with respect to the Gaussian metric $g^G=\Gau dx^2$ is given by
\begin{equation}\label{Jacobian in Gaussian metric}
J^G f(x, S)=Jf(x, S)\frac{e^{-|f(x)|^2/4}}{e^{-|x|^2/4}}, \quad (x, S)\in G_2(\R^3),
\end{equation}
where $Jf(x, S)$ is the Jacobian of $f$ with respect to the Euclidean metric $dx^2$.  Given $V\in\V_2(\R^3, g^G)$, the {\em push-forward} $f_{\#}(V^G)$ of $V^G=\Gau V$ under $f$ (c.f. \cite[\S 39]{Si83}) is given by:
\begin{equation}\label{push-forward in Gaussian metric}
f_{\#}(V^G)(A)=\int_{F^{-1}(A)}J^G f(x, S)\Gau dV(x, S),
\end{equation}
where $A\subset G_2(\R^3)$, and $F: G_2(\R^3)\rightarrow G_2(\R^3)$ is given by $F(x, S)=\big(f(x), df_x(S)\big)$, $(x, S)\in G_2(\R^3)$.

Given $X\in \X_c(\R^3)$, let $f_t: \R^3\rightarrow \R^3$ be the flow given by $X$, i.e. $\frac{d}{dt}f_t(p)=X(f_t(p))$ and $f_0=id$. Given $V\in \V_2(\R^3, g^G)$, the {\em first variation formula} of $V^G=\Gau V$ is
\begin{equation}\label{1st variation 1}
\begin{split}
\de_G V^G(X) & := \frac{d}{dt}\bigg{|}_{t=0}\|(f_t)_{\#}(V^G)\|(\R^3)\\
                   & =\frac{d}{dt}\bigg{|}_{t=0} \int_{G_2(\R^3)} J^G f_t(x, S) \Gau dV(x, S)\\
                   & =\frac{d}{dt}\bigg{|}_{t=0} \int_{G_2(\R^3)} Jf_t(x, S) \frac{1}{4\pi}e^{-\frac{|f_t(x)|^2}{4}}dV(x, S)\\
                   & =\int_{G_2(\R^3)}\big(div_S X-\frac{\lan X, \vec{x}\ran}{2}\big) dV^G(x, S).
\end{split}
\end{equation}
Here $\vec{x}$ is the position vector of $x$ in $\R^3$, and $div_S X$ is the divergence of the vector field $X$ on a given 2-plane $S$ with respect to the Euclidean metric $dx^2$, i.e. $div_S X=\sum_{i=1}^2\lan \nabla_{e_i}X, e_i\ran$, where $\{e_1, e_2\}$ is an orthonormal basis of $S$ under $dx^2$.

In fact, we can also get (\ref{1st variation 1}) by using the conformally changed metric $g^G=\Gau dx^2$. By basic first variation formula for sub-manifolds (c.f. \cite[Chap. 1, \$ 1.3]{CM11}),
\begin{equation}\label{1st variation 2}
\begin{split}
\de_G V^G(X) & =\frac{d}{dt}\bigg{|}_{t=0} \int_{G_2(\R^3)} J^G f_t(x, S) \Gau dV(x, S)\\
                        & =\int_{G_2(\R^3)}div^G_S X dV^G(x, S).
\end{split}
\end{equation}
Here $div^G_S X$ is the divergence of the vector field $X$ on a given 2-plane $S$ with respect to the Gaussian metric $\Gau dx^2$. It is easily seen that (c.f. \cite[\$ 1.159]{Be})
$$div^G_S X=div_S X-\frac{\lan X, \vec{x}\ran}{2}.$$

\begin{definition}\label{definition of F-stationary}
$V^G$ is called $F$-stationary, if $\de_G V^G(X)=0$ for any $X\in \X_c(\R^3)$.
\end{definition}



Given $V\in \V_2(S^3)$, the restriction of $V$ to $G_2(\R^3)$, i.e. $V\lc G_2(\R^3)$, is a 2-varifold in $\V_2(\R^3)$. We will use the first variation of $V\lc G_2(\R^3)$ under the Gaussian metric $g^G$. We abuse the notion of first variation, and write $\de_G\big(V\lc G_2(\R^3)\big)$ as $\de_G V$. By (\ref{1st variation 1})(\ref{1st variation 2}), 
\begin{equation}\label{1st variation 3}
\begin{split}
\de_G V(X) & =\de_G\big(V\lc G_2(\R^3)\big)(X)\\
                   & =\int_{G_2(\R^3)}\big(div_S X-\frac{\lan X, \vec{x}\ran}{2}\big) dV(x, S)\\
                   & =\int_{G_2(\R^3)}div^G_S X dV(x, S).
\end{split}
\end{equation}


\section{Overview of the proof of the min-max theorem}\label{overview of the proof of min-max theorem}

Given a saturated set $\La$ of n-parameter continuous families of surfaces, we will outline the proof of Theorem \ref{main min-max theorem} in this section. The proof consists of three parts.

\subsection{Pull-tight} 
In general, given a minimizing sequence $\{\{\Si_{\nu}\}^k\}$, viewing each slice $\Si^k_{\nu}$ as a varifold in $\V^G_2(\R^3)$ by multiplying with the Gaussian weight $\frac{1}{4\pi}e^{-\frac{|x|^2}{4}}$, it is easy to find a min-max sequence which converges to a $F$-stationary varifold $V$ under the varifold norm $\bF_{S^3}$ on $S^3$, and $\|V\|(S^3)=W(\La)$. However we do not want $\|V\|(\{\infty\})\neq 0$, and it is not necessarily true that every min-max sequence $\{\Si^k_{\nu_k}\}$ converges to an $F$-stationary varifold. We will deal with this difficulty by a carefully designed ''pull-tight" argument (the original argument on a compact manifold is due to F. Almgren \cite{AF65} and J. Pitts \cite{P81}). Our version uses the framework of Colding-De Lellis \cite[\S 4]{CD}. As the restriction of our families to the boundary $\partial I^n$ are nontrivial surfaces, we will actually use a multi-parameter version similar to the compact case in \cite[\S 15]{MN12}.
\begin{proposition}\label{pull tight}
If $W(\La)>\max_{\nu\in \partial I^n}F(\Si_{\nu})$, then there exists a minimizing sequence $\{\{\Si_{\nu}\}^k\}\subset\La$ such that every min-max sequence $\{\Si^k_{\nu_k}\}$ converges to a $F$-stationary varifold $V$ under the $\bF_{S^3}$-norm with $\|V\|(\R^3)=W(\La)$.
\end{proposition}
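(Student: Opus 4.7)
The plan is to adapt the classical Almgren--Pitts pull-tight argument (in the form of Colding--De Lellis \cite{CD} and the multi-parameter version of Marques--Neves \cite{MN12}) to the non-compact Gaussian setting. The central new ingredient to produce is a continuous homotopy $H:[0,1]\times \V_2(S^3)\to \V_2(S^3)$ with $H(0,\cdot)=\mathrm{id}$. Writing $\V^{\ast}$ for the set of $F$-stationary varifolds on $\R^3$ having no mass at $\{\infty\}$, the map $H$ must fix $\V^{\ast}$, satisfy $\|H(1,V)\|(S^3)<\|V\|(S^3)$ whenever $V\notin \V^{\ast}$, and enjoy a quantitative mass-drop estimate controlled by $\bF_{S^3}(V,\V^{\ast})$. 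Once $H$ is in hand, the proposition follows by applying $H$ slicewise to an arbitrary minimizing sequence, using a cutoff near $\partial I^n$ supplied by the strict gap hypothesis $W(\La)>\max_{\partial I^n}F(\Si_\nu)$.

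For the setup, since every family $\{\Si_\nu\}\in \La$ has uniformly bounded Gaussian area, the associated varifolds $V^G\in \V_2(S^3)$ lie in an $\bF_{S^3}$-compact subset. Hence any min-max subsequence $\{\Si^k_{\nu_k}\}$ has an $\bF_{S^3}$-convergent subsequence, with limit $V$ satisfying $\|V\|(S^3)\leq W(\La)$; the obstruction is that $V$ may fail to lie in $\V^{\ast}$, either by not being $F$-stationary on $\R^3$ or by carrying a positive Dirac at $\{\infty\}$. To build the tightening map, for each $V\notin \V^{\ast}$ I would pick a compactly supported vector field $X_V\in \X_c(\R^3)$ of one of two types: if $V$ is not $F$-stationary, choose $X_V$ so that $\de_G V(X_V)<0$ (as in the compact theory); if instead $\|V^G\|(\{\infty\})>0$ (or more generally if too much $V^G$-mass sits outside a very large ball), take $X_V$ to be an inward radial push on that far region, obtaining strict mass loss from the exponential decay of $\Gau$. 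A locally finite partition of unity on $\V_2(S^3)\setminus \V^{\ast}$ glues these local choices into a continuous selection $V\mapsto X_V$, and $H(t,V)$ is defined as the time-$t$ push-forward of $V$ under the flow of a scaled $X_V$. Standard computations then yield a continuous estimate
\begin{equation*}
\|V\|(S^3)-\|H(1,V)\|(S^3)\;\geq\; g\bigl(\bF_{S^3}(V,\V^{\ast})\bigr),
\end{equation*}
for some continuous $g:[0,\infty)\to[0,\infty)$ with $g(r)>0$ for $r>0$.

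To finish, given any minimizing sequence $\{\{\Si_\nu\}^k\}\subset \La$, I would form the new family $\{\ti\Si^k_\nu\}$ by replacing $\Si^k_\nu$ with $H(\eta_k(\nu),\cdot)$ applied to $\Si^k_\nu$, where $\eta_k:I^n\to[0,1]$ is a continuous cutoff vanishing in a neighborhood of $\partial I^n$ and equal to $1$ on $\{\nu:F(\Si^k_\nu)\geq W(\La)-\ep_k\}$ for some $\ep_k\downarrow 0$. The strict boundary gap is used precisely here: for $\ep_k$ small, the support of $\eta_k$ avoids $\partial I^n$, so by the first bullet of the remark after Definition \ref{continuous family} the new family still lies in $\La$. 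The quantitative estimate above then forces every min-max subsequence of $\{\{\ti\Si_\nu\}^k\}$ to converge in $\bF_{S^3}$ to an element of $\V^{\ast}$ of full mass $W(\La)$ on $\R^3$; otherwise the maximal slice mass would drop by a definite amount, contradicting minimality.

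\textbf{The hard part} will be the construction of the tightening flow, specifically arranging a single continuous selection $V\mapsto X_V$ that detects both failure modes simultaneously. In the compact Almgren--Pitts setting the only failure is non-stationarity; here a varifold can satisfy $\de_G V(X)=0$ for every $X\in \X_c(\R^3)$ and still fail to be in $\V^{\ast}$ because of a Dirac mass at $\{\infty\}$, so any workable tightening must exploit the exponential decay of $\Gau$ to move mass back from infinity. Achieving continuity in $\bF_{S^3}$ is subtle, because $\bF_{S^3}$ is comparatively insensitive to how mass is arranged near $\{\infty\}$ beyond its total amount, whereas the correct far-field push depends on this arrangement. This is exactly what the subsequent sections on the tightening vector field and the tightening map are designed to accomplish.
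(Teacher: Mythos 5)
Your overall architecture matches the paper's: compactify to $S^3$, build a continuous mass-decreasing deformation via vector fields selected by a partition of unity in the $\bF_{S^3}$-metric, apply it slicewise with the boundary gap guaranteeing the boundary slices are untouched, and conclude by the quantitative mass-drop estimate. But the concrete mechanism you propose for the failure mode you yourself flag as the hard part would not work, and this is a genuine gap rather than a deferrable detail. If $V$ carries a Dirac mass at $\{\infty\}$ while $V\lc G_2(\R^3)$ is already $F$-stationary, then by definition $\de_G V(X)=0$ for \emph{every} $X\in\X_c(\R^3)$, and moreover the flow of a compactly supported field leaves the point mass at $\{\infty\}$ completely untouched; so no choice of compactly supported $X_V$ — radial or otherwise — can produce the definite first-variation bound $\de_G V(X_V)\leq -c_j<0$ that the partition-of-unity/continuity argument needs. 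In addition, the sign of your proposed push is backwards: since $\de_G V(X)=\int\big(div_S X-\tfrac{1}{2}\lan X,\vec{x}\ran\big)dV^G$, for mass at large radius the weight term dominates and an \emph{inward} radial push makes this positive (moving mass toward the origin increases the Gaussian weight faster than it shrinks area); the mass-destroying direction is outward. The paper's resolution is precisely a non-compactly-supported field, $X=\phi(r/\rho)\,\vec{x}/r^2$, whose Gaussian divergence extends to a Lipschitz function on $G_2(S^3)$ with limit $-\tfrac12$ at infinity (Lemmas \ref{C1 bound for X}, \ref{Lipschitz bound}), together with an explicitly defined push-forward of $S^3$-varifolds under its flow whose Jacobian at $\{\infty\}$ is $e^{-t/2}$ (Definitions \ref{push forward of 2-varifolds on S3 by radial vector field}, \ref{push forward of 2-varifolds on S3}); this yields $\de_G V(X_V)\leq -\tfrac18\cdot\tfrac{1}{2\cdot 2^j}$ when the mass at infinity is large (Lemma \ref{vf to deform mass}, Case 2).

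A secondary consequence of the same error: because the correct tightening fields are not compactly supported, you cannot invoke the compact-support clause of the remark after Definition \ref{continuous family} to keep the deformed families in $\La$; the paper instead uses the second clause (flows of $n$-parameter families of vector fields with uniform $C^1$ bounds, Lemma \ref{vector fields with C1 bound preserve continuous family}) after mollifying $\nu\mapsto T_\nu H_\nu$ so that it is smooth in $(\nu,x)$ and vanishes near $\partial I^n$ — a smoothing step your sketch omits but which is needed for membership in the saturated set. Your parameter-space cutoff $\eta_k$ is a reasonable variant of the paper's device of including the boundary slices $B$ in $A_0$ (so $H_V=0$ there), but it does not repair the central problem above: without the radial field acting at $\{\infty\}$, the tightened min-max limits can still be $F$-stationary varifolds with $\|V\|(\{\infty\})>0$, and the conclusion $\|V\|(\R^3)=W(\La)$ fails.
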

We call such a varifold $V$ a {\em min-max varifold}.
\begin{remark}
Compared to the arguments in \cite{AF65, P81, CD}, the main difficulty in our case is due to the fact that the underlying space $\R^3$ is non-compact. To overcome this issue, we view all the Gaussian weighted varifolds as varifolds defined on $S^3=\R^3\cup\{\infty\}$. Another difficulty is that the limit of a sequence of such varifolds might have a point mass at $\infty$. We will deal with this by a specially designed tightening process in the following sections \S \ref{An important vector field to deform the delta-mass} to \S \ref{Construction of tightening map}. The final proof will be given in \S \ref{Deforming smooth families by the tightening map}. 
\end{remark}

\subsection{Almost minimizing}
The regularity of the min-max varifold follows from the concept of "almost minimizing surfaces", or a.m. surfaces, developed by F. Almgren \cite{AF65}. We will use the version by Colding-De Lellis \cite{CD}. Denote $C$ by a fixed integer.

\begin{definition}
(\cite[Definition 3.2]{CD}) Given $\ep>0$, an open set $U\subset \R^3$, and a surface $\Si$, we say that $\Si$ is $\ep$-a.m. in $U$ if there {\em DOES NOT} exist any isotopy $\psi$ supported in $U$ such that
$$F(\psi(t, \Si))\leq F(\Si)+\ep/C,\textrm{for all } t;$$
$$F(\psi(1, \Si))\leq F(\Si)-\ep.$$
A sequence of surfaces $\{\Si^n\}$ is said to be a.m. in $U$ if each $\Si^n$ is $\ep_n$-a.m. in $U$ for some sequence $\ep_n\rightarrow 0$.
\end{definition}

Using the combinatorial arguments of F. Almgren \cite{AF65} and J. Pitts \cite{P81}, L. Simon and F. Smith (see Colding-De Lellis \cite{CD}) proved that one could always find at least one min-max sequence that is almost minimizing. Since the proof is essentially local, we can adapt them here in a straightforward way. The appendix of \cite{CGK} provides a multi-parameter version of these results, yielding:
\begin{proposition}\label{existence of a.m. sequence}
If $W(\La)>\max_{\nu\in \partial I^n}F(\Si_{\nu})$, then there exists a min-max sequence $\{\Si^j\}$ and a function $r: (\R^3, g^G)\rightarrow \R_+$ such that
\begin{itemize}
\vspace{-5pt}
\addtolength{\itemsep}{-0.7em}
\setlength{\itemindent}{0em}
\item $\{\Si^j\}$ is a.m. in every annulus $An$ (w.r.t. the Gaussian metric $g^G$) centered at $x$ and with outer radius at most $r(x)$;
\item $\frac{1}{4\pi}e^{-\frac{|x|^2}{4}}\Si^j$ converges to a $F$-stationary varifold $V$ in $S^3$ with $\|V\|(\{\infty\})=0$.
\end{itemize}
\end{proposition}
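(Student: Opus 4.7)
The plan is to combine Proposition \ref{pull tight} with the standard Almgren--Pitts combinatorial scheme, adapted to our $n$-parameter continuous families via the appendix of \cite{CGK} and to the Gaussian weighted functional. The result then follows by choosing a careful min-max sequence drawn from a tightened minimizing sequence.

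First I would start from a tightened minimizing sequence $\{\{\Si_\nu\}^k\}\subset\La$ produced by Proposition \ref{pull tight}. For this sequence, every min-max sequence already converges in the $\bF_{S^3}$ norm to an $F$-stationary varifold of total mass $W(\La)$, so in particular with no mass at $\{\infty\}$. Hence the second bullet of the conclusion is automatic for any min-max sequence extracted from this family; the only real work is to pick one that is also almost minimizing on small annuli around every point of $\R^3$.

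For the almost-minimizing property I would run the contradiction scheme of Almgren--Pitts in the Colding--De Lellis formulation (c.f. \cite[\S 5--6]{CD}), replacing the classical $1$-parameter combinatorial lemma by the $n$-parameter version in the appendix of \cite{CGK}. Suppose no min-max subsequence of $\{\{\Si_\nu\}^k\}$ were a.m.\ on annuli of arbitrarily small scale at every $x\in\R^3$. Then for each large $k$ and each parameter $\nu$ with $F(\Si_\nu^k)$ close to $W(\La)$, one could cover a compact exhaustion of $\R^3$ by annuli on each of which $\Si_\nu^k$ admits an isotopy lowering its Gaussian area by at least $\ep_k$ while never increasing it by more than $\ep_k/C$. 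The $3^n$-coloring combinatorial lemma of \cite{CGK} then patches these compactly supported local isotopies into a single smooth $n$-parameter family of diffeomorphisms, which applied to $\{\Si_\nu^k\}$ produces a homotopic family in $\La$ with maximal Gaussian area strictly below $\F(\{\Si_\nu\}^k)-\eta$ for some fixed $\eta>0$, contradicting that $\{\{\Si_\nu\}^k\}$ is minimizing. The function $r$ emerges from this argument as the scale below which the a.m.\ property holds at each point, exactly as in \cite[Proposition 5.3]{CD}.

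The main obstacle is making the multi-parameter combinatorics compatible with the two features special to our setting: the nontrivial boundary behavior of the families on $\partial I^n$, and the non-compactness of the ambient $(\R^3, g^G)$. The hypothesis $W(\La)>\max_{\nu\in\partial I^n} F(\Si_\nu)$ is what handles the first: near $\partial I^n$ slices have area bounded strictly below the width, so the combinatorial patching can be localized to an open neighborhood of the set where $F(\Si_\nu^k)$ is near $W(\La)$, leaving the boundary slices untouched and keeping the modified family in $\La$ (this is exactly analogous to \cite[\S 15]{MN12}). The non-compactness issue is in fact mild at this stage, because the Almgren--Pitts isotopies and the combinatorial patching all produce diffeomorphisms with compact support in $\R^3$, and on each compact set the Gaussian weight $\frac{1}{4\pi}e^{-|x|^2/4}$ is smooth and bounded away from $0$ and $\infty$; thus the local deformation estimates of \cite{CD} transfer verbatim with the weight absorbed into the constants. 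The fact that the limit varifold $V$ carries no mass at $\{\infty\}$ is not established by the combinatorial argument but rather inherited from the pull-tight step of Proposition \ref{pull tight}, which is where all the genuinely non-compact analysis was concentrated.
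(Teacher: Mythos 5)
Your proposal matches the paper's treatment: the paper likewise obtains this proposition by citing the Almgren--Pitts combinatorial argument in the Colding--De Lellis formulation, invoking the multi-parameter version from the appendix of \cite{CGK}, observing that the argument is purely local (so compactly supported isotopies make the Gaussian weight harmless), and inheriting the $F$-stationarity and absence of mass at $\{\infty\}$ from the tightened sequence of Proposition \ref{pull tight}. Your sketch is, if anything, slightly more explicit than the paper's own two-line justification, and the ingredients you identify (localization away from $\partial I^n$ via the width hypothesis, as in \cite[\S 15]{MN12}) are exactly the ones the paper relies on.
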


Now consider $\R^3$ with the conformally changed metric $g^G$. Our definition of a.m. sequence is then the same as in \cite{CD}. In fact, it was shown that the varifold limit of an a.m. sequence has smooth support. The proof is purely local, and is applicable to our case.
\begin{theorem}\label{regularity for a.m. surface}
\cite[Theorem 7.1]{CD} The support of $V$ is a smooth, embedded $F$-minimal surface (i.e. self-shrinker) $\Sigma$.  Thus $V=m\Sigma$ for some positive integer $m$.
\end{theorem}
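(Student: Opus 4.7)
The strategy is to adapt the local proof of \cite[Theorem 7.1]{CD} to the Gaussian setting by working with the conformally changed metric $g^G = \frac{1}{4\pi} e^{-|x|^2/4}\, dx^2$ on $\R^3$. Under $g^G$, $F$-area is precisely the Riemannian area and $F$-stationary varifolds become stationary in the usual sense, while $F$-minimality becomes ordinary minimality. Since $g^G$ is smooth and everywhere positive, on any relatively compact set $K \subset \R^3$ it has uniformly bounded geometry, so the standard local regularity theorems (Allard regularity, Schoen--Simon curvature estimates for stable minimal surfaces, removable singularity results) apply verbatim. The strategy is thus to verify that every step of the Simon--Smith / Colding--De Lellis replacement argument is genuinely local, and then read off embedded smoothness from it.

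The first step is to fix $x_0 \in \mathrm{spt}(V) \setminus \{\infty\}$ (using $\|V\|(\{\infty\}) = 0$ from Proposition \ref{existence of a.m. sequence}) and pass to a $g^G$-geodesic ball $B_{r(x_0)}(x_0)$ in which $\{\Sigma^j\}$ is a.m.\ in every annulus centered at $x_0$. Inside such an annulus $\mathrm{An}$, the $\ep_j$-a.m.\ property implies that one can construct a \emph{replacement} surface $\Sigma^{j,*}$, isotopic to $\Sigma^j$ within $\mathrm{An}$, equal to $\Sigma^j$ outside $\mathrm{An}$, and minimizing $F$-area in a suitable restricted isotopy class. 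Taking a varifold limit yields a replacement varifold $V^*$ that agrees with $V$ off $\mathrm{An}$ and is locally area-minimizing (hence stable, hence smooth with bounded second fundamental form) inside $\mathrm{An}$. All of the compactness and regularity inputs (Schoen--Simon for stable surfaces, Allard for stationary limits, the genus control of Simon--Smith) transfer to $g^G$ on $\overline{B_{r(x_0)}(x_0)}$ without modification.

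The second step is to upgrade smoothness of replacements to smoothness of $V$ itself at $x_0$. Following \cite[\S 7]{CD}, I would build replacements $V^*$ and $V^{**}$ on two overlapping annuli about $x_0$ whose union contains a punctured $g^G$-ball $B \setminus \{x_0\}$. Each replacement has smooth embedded self-shrinker support in its annulus, and on the overlap the supports must coincide by the strong maximum principle / unique continuation for the minimal surface system in the metric $g^G$. Concatenating them yields a smooth embedded self-shrinker $\Sigma$ in $B \setminus \{x_0\}$ that agrees with $\mathrm{spt}(V)$; the standard removable singularity argument (using the density upper bound that comes with stationarity and the Allard regularity theorem applied in $g^G$) then extends $\Sigma$ smoothly across $x_0$. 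Covering $\mathrm{spt}(V)$ by such balls gives globally smooth embedded $\mathrm{spt}(V)$, and the constancy theorem applied on each connected component yields $V = m\,\Sigma$ for a positive integer $m$.

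The main obstacle that must be flagged is not technical regularity but the fact that $g^G$ is incomplete and its curvature blows up as $|x| \to \infty$. A purely global argument could break down, so I would be careful to do every replacement and every curvature estimate inside a fixed compact subset of $\R^3$ on which $g^G$ is uniformly equivalent to the Euclidean metric and has bounded derivatives. Since Proposition \ref{existence of a.m. sequence} already guarantees $\|V\|(\{\infty\}) = 0$, every point of $\mathrm{spt}(V)$ lies in such a compact region, and the local character of the Colding--De Lellis argument makes the adaptation essentially automatic.
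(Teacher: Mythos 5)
Your proposal is correct and follows essentially the same route as the paper: the paper simply passes to the conformally changed metric $g^G$, observes that the a.m.\ notion then coincides with that of Colding--De Lellis, and invokes \cite[Theorem 7.1]{CD} on the grounds that its replacement/regularity argument is purely local, which is exactly the scheme you spell out (with $\|V\|(\{\infty\})=0$ guaranteeing every point of $\mathrm{spt}(V)$ sits in a compact region of $(\R^3,g^G)$ with bounded geometry). Your extra detail on replacements, Schoen--Simon, removable singularities, and the constancy theorem is a faithful expansion of what the citation encapsulates, so no changes are needed.
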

\begin{remark}\label{remark for regularity for a.m. surface}
As the smooth metric measure space $(\R^3, dx^2, \frac{1}{4\pi}e^{-\frac{|x|^2}{4}}dvol)$ has positive Bakry-Emery Ricci tensor, we know that any two Gaussian minimal surfaces must intersect by a Frankel-type theorem \cite[Theorem 7.4]{WW}. So the support of the min-max varifold in Theorem \ref{regularity for a.m. surface} must be connected.
\end{remark}

\subsection{Genus control}

The min-max surface $\Sigma$ may {\it a priori} have infinite topology, so it is most convenient to consider exhaustions of the Gaussian space.  Fix a sequence of positive numbers $R_i\rightarrow\infty$ so that $\Sigma$ is transverse to $\partial B_{R_i}(0)$ and thus intersects $\partial B_{R_i}(0)$ in a union of $e_i$ circles and has genus $g_i$ in $B_{R_i}(0)$.  For each $i>0$, we can then find $2g_i$ curves $\{\gamma_j\}_{j=1}^{2g_i}$ on $\Sigma\cap B_{R_i}(0)$ meeting at one point so that $\Sigma\setminus\cup_{j=1}^{2g_i}\gamma_j$ is a planar domain with $e_i$ ends.  The lifting argument from \cite{K13} implies that $g_im\leq g$, (where $m$ is the integer multiplicity such that $V=m\Sigma$).   Since this holds for all positive $i$, we see that the genus  $h$ of $\Sigma$ is finite and moreoever that $h$ is at most $g$ (in fact $hm\leq g$).


\section{An important vector field to deform the delta-mass}\label{An important vector field to deform the delta-mass}

One important issue to carry out the ``tightening" is to deform the delta-mass at $\{\infty\}$ in a continuous way. We will use the flow of a certain vector field to do the deformation. The vector field we will use is
$$X(x)=\frac{\vec{x}}{r^2},$$
where $r=|x|$ is the radial distance function of $(\R^3, dx^2)$. Now we collect a few properties of this vector field.

\subsection{A basic calculation of radial vector field}

First, we calculate:
\begin{equation}
\begin{split}
div^G_S X & =div_S\frac{\vec{x}}{r^2}-\frac{\lan\vec{x}/r^2, \vec{x}\ran}{2} =\frac{2}{r^2}-\frac{2}{r^3}\lan \vec{x}, \nabla_S r\ran-\frac{1}{2}\\
                  & =\frac{2}{r^2}-\frac{2}{r^2}|\nabla_S r|^2-\frac{1}{2}\\
                  & =\frac{2|\nabla^{\perp}r|^2}{r^2}-\frac{1}{2}.
\end{split}
\end{equation}
Here $\nabla_S r$ is the projection of the gradient $\nabla r$ onto the 2-plane S, and $\nabla^{\perp}r$ is the projection of $\nabla r$ to the orthogonal complement of $S$ (with respect to the Euclidean metric $dx^2$).

The calculation directly implies that,
\begin{lemma}
$div^G_S X(x)$ is a bounded function near $\infty$ in $G_2(\R^3)$, and can be extended to a $C^0$ function in $G_2(S^3)$ away from $0$ by letting it equal to $-\frac{1}{2}$ at $\infty$.
\end{lemma}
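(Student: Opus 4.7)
\emph{Proof proposal.} The formula derived immediately before the lemma gives the pointwise identity
\begin{equation*}
\textrm{div}^G_S X(x) \;=\; \frac{2|\nabla^{\perp}r|^2}{r^2} - \frac{1}{2},
\end{equation*}
valid at every $(x,S)\in G_2(\R^3)$ with $x\neq 0$. The plan is to read both claims directly off this expression, using only the trivial estimate $|\nabla^{\perp}r|\leq|\nabla r|=1$.

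First, for the boundedness claim near infinity, observe that the orthogonal projection $\nabla^{\perp}r$ of $\nabla r$ onto $S^{\perp}$ satisfies $|\nabla^{\perp}r|^2\leq 1$ uniformly in $S$, so
\begin{equation*}
\Bigl|\textrm{div}^G_S X(x) + \tfrac12\Bigr| \;\leq\; \frac{2}{r^2}
\end{equation*}
for all $(x,S)\in G_2(\R^3)\setminus\{0\}$. Thus $\textrm{div}^G_S X$ is bounded (in fact tends uniformly to $-1/2$) on the complement of any neighborhood of $0$ in $G_2(\R^3)$.

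Second, for the continuous extension to $G_2(S^3)$ away from $0$: recall that in the one-point compactification, the fibre of $G_2(S^3)$ over $\infty$ consists of the $2$-planes in $T_\infty S^3$, and a sequence $(x_n,S_n)\in G_2(\R^3)$ converges to $(\infty,S_\infty)\in G_2(S^3)$ precisely when $|x_n|\to\infty$ (the limit of the plane data in the Grassmannian fibre over $\infty$ being automatic once we extend by the constant value $-1/2$). Since the uniform bound above gives
\begin{equation*}
\textrm{div}^G_{S_n} X(x_n) \;\longrightarrow\; -\tfrac12 \qquad\text{whenever } |x_n|\to\infty,
\end{equation*}
independently of the behaviour of $S_n$, the extension of $\textrm{div}^G_S X$ to $G_2(S^3)\setminus\{0\}$ defined by the value $-1/2$ on the fibre over $\infty$ is continuous there. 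Away from the point at infinity and from $0$, continuity is immediate from the formula, since $r$ and $\nabla^{\perp}r$ depend smoothly on $(x,S)$.

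There is no real obstacle here: once the explicit formula for $\textrm{div}^G_S X$ is in hand (as it is, from the preceding display), both conclusions are a one-line consequence of the trivial estimate $|\nabla^{\perp}r|\leq 1$ and of the fact that the error term decays like $r^{-2}$, which is what produces the uniform, direction-independent limit needed for continuity on $G_2(S^3)$.
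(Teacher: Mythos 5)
Your proposal is correct and follows essentially the same route as the paper: the paper simply states that the preceding computation $div^G_S X = \frac{2|\nabla^{\perp}r|^2}{r^2}-\frac{1}{2}$ "directly implies" the lemma, and your argument just makes explicit the trivial estimate $|\nabla^{\perp}r|\leq 1$, the resulting uniform $O(r^{-2})$ decay to $-\tfrac12$, and the continuity of the constant extension over the fibre at $\infty$. Nothing further is needed.
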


\subsection{Radial vector field with compact support}

We usually need to multiply $X$ with a cutoff function to make it supported near $\infty$. Given a number $\rho>0$, let $\ga(r)=\phi(\frac{r}{\rho})$, where $\phi: \R\rightarrow \R_+$ is a smooth function such that
\begin{equation}
\left. \phi(s)\Bigg\{ \begin{array}{ll}
=1, \quad \textrm{ if $s\geq 2$}\\
=0, \quad \textrm{ if $s\leq 1$}\\
\geq 0, \quad \textrm{ if $1\leq s\leq 2$}
\end{array}\right. \textrm{ and } 0\leq \phi^{\pr}(s) \leq 1+\ep,
\end{equation}
for some small $\ep>0$.

Let
\begin{equation}\label{radial vector field}
X(x)=\phi(\frac{r}{\rho})\frac{\vec{x}}{r^2}.
\end{equation}
Then the gradient of $X$ is given by
$$DX=\phi^{\prime}(\frac{r}{\rho})\frac{Dr}{\rho}\otimes \frac{x}{r^2}+\phi(\frac{r}{\rho})\big(\frac{Dx}{r^2}-\frac{2Dr\otimes x}{r^3}\big).$$
Using the definition of $\phi$, it is easily seen that
$$\|DX\|_{L^{\infty}}\leq (4+\ep)\frac{1}{\rho^2}.$$
\begin{lemma}\label{C1 bound for X}
For $\rho>3$, $\|DX\|_{L^{\infty}}\leq 1$.
\end{lemma}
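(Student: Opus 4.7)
The plan is to observe that Lemma \ref{C1 bound for X} is an immediate numerical consequence of the $L^\infty$-bound $\|DX\|_{L^\infty} \leq (4+\ep)/\rho^2$ that the paper has just derived from the explicit formula
\[
DX = \phi'\!\big(\tfrac{r}{\rho}\big)\frac{Dr}{\rho}\otimes \frac{x}{r^2} + \phi\!\big(\tfrac{r}{\rho}\big)\Big(\frac{Dx}{r^2} - \frac{2\,Dr\otimes x}{r^3}\Big).
\]
So my goal is simply to (i) confirm this bound by a direct term-by-term estimate (the proof is one line of bookkeeping but worth rechecking to make sure the numerical constant $4$ is sharp enough), and then (ii) plug in $\rho > 3$.

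For step (i), I would record that $X$ is supported in $\{r \geq \rho\}$ because $\phi(r/\rho)=0$ for $r\leq\rho$, so every estimate may assume $r \geq \rho$. Then I bound each of the three terms in $DX$ in operator norm:
\begin{itemize}
\item The first term has norm $|\phi'(r/\rho)| \cdot |Dr|/\rho \cdot |x|/r^2 \leq (1+\ep)/\rho \cdot 1/r \leq (1+\ep)/\rho^2$;
\item The second term has norm $|\phi(r/\rho)|/r^2 \leq 1/\rho^2$;
\item The third term has norm $2 |\phi(r/\rho)| \cdot |x|/r^3 \leq 2/\rho^2$.
\end{itemize}
Summing gives $\|DX\|_{L^\infty} \leq (4+\ep)/\rho^2$, confirming the quoted bound.

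For step (ii), I choose $\ep$ in the construction of the cutoff $\phi$ small (for instance $\ep \leq 5$, which is compatible with the requirement ``$\ep > 0$ small'' made just before (\ref{radial vector field})). Then for $\rho > 3$ we have $\rho^2 > 9 \geq 4+\ep$, hence $\|DX\|_{L^\infty} \leq (4+\ep)/\rho^2 < 1$, which is the claim.

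There is really no significant obstacle here; the lemma is purely a bookkeeping statement designed to be quoted later when one wants to apply results like Lemma \ref{vector fields with C1 bound preserve continuous family} (which requires a uniform $C^1$-bound on the vector fields generating the isotopies). The only subtlety worth flagging is consistency: one must make sure the smallness of $\ep$ used here is compatible with the choice made in defining $\phi$ earlier in the section, which it is, since $\ep$ is only required to be ``small'' and no lower bound has been imposed on how small.
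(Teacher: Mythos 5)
Your proposal is correct and follows exactly the paper's route: the paper derives the same bound $\|DX\|_{L^{\infty}}\leq (4+\ep)/\rho^2$ from the displayed formula for $DX$, and the lemma is precisely the observation that $\rho>3$ (with $\ep$ small) makes this at most $1$. Your term-by-term verification of the constant $4+\ep$ and the consistency remark about $\ep$ match the intended one-line argument.
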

Moreover
\begin{equation}\label{divergence of radial vector field}
\begin{split}
div^G_S X & =\phi(\frac{r}{\rho})div^G_S(\frac{\vec{x}}{r^2})+\lan \nabla_S\phi(\frac{r}{\rho}), \frac{\vec{x}}{r^2}\ran\\
                  & =\phi(\frac{r}{\rho})\big(\frac{2|\nabla^{\perp}r|^2}{r^2}-\frac{1}{2}\big)+\frac{1}{\rho}\phi^{\pr}(\frac{r}{\rho})\frac{\lan \nabla_S r, \vec{x}\ran}{r^2}\\
                  & =\phi(\frac{r}{\rho})\big(\frac{2|\nabla^{\perp}r|^2}{r^2}-\frac{1}{2}\big)+\frac{r}{\rho}\phi^{\pr}(\frac{r}{\rho})\frac{|\nabla_S r|^2}{r^2}.
\end{split}
\end{equation}
By our choice of $\phi$, we know that
$$0\leq \frac{r}{\rho}\phi^{\pr}(\frac{r}{\rho})\leq \frac{r}{\rho}(1+\ep)\leq 2(1+\ep), \textrm{as $\phi^{\pr}\neq 0$ only when $1\leq \frac{r}{\rho}\leq 2$}.$$
Therefore the asymptotic behavior of $div^G_S X$ near $\infty$ is as follows:
\begin{equation}\label{asymptotics for divergence of radial vf}
\left. div^G_S X\ \Bigg\{ \begin{array}{ll}
=\frac{2|\nabla^{\perp}r|^2}{r^2}-\frac{1}{2}, \quad \textrm{ if $r\geq 2\rho$}\\
=0, \quad\quad\quad\quad\quad \textrm{ if $r\leq \rho$}\\
\leq  \phi(\frac{r}{\rho})\big(\frac{2|\nabla^{\perp}r|^2}{r^2}-\frac{1}{2}\big)+2(1+\ep)\frac{|\nabla_S r|^2}{r^2}, \quad \textrm{ if $\rho\leq r\leq 2\rho$}.
\end{array}\right.
\end{equation}

\subsection{Lipschitz bound for $div^G_S X$}

In this section, we will show that $div^G_S X$ extends to a Lipschitz function on $G_2(S^3)$ with respect to the round metric $ds^2_0$. Here we use the {\em stereographic projection}
\begin{equation}\label{stereographic projection}
(x, y, z)\rightarrow \big(\frac{2x}{1+r^2}, \frac{2y}{1+r^2}, \frac{2z}{1+r^2}, \frac{r^2-1}{r^2+1}\big)
\end{equation}
to identify $\R^3$ with $S^3\backslash\{\textrm{north pole}\}$. Under this map,
$$ds^2_0=\frac{4}{(1+r^2)^2}dx^2.$$
\begin{lemma}
lf $f$ is a function defined on $\R^3$, then 
\begin{equation}\label{conformal change of gradient}
|\nabla_0 f|^2_{ds^2_0}=\frac{(1+r^2)^2}{4}|\nabla f|^2_{dx^2},
\end{equation}
where $\nabla_0$ is the connection of $ds^2_0$, and $f$ is viewed as a function on $S^3$ on the left hand side.
\end{lemma}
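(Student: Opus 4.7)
The plan is to use the standard conformal-change formula for gradients. Recall that for any two conformally related metrics $\tilde g = \lambda^2 g$ on a manifold, with $\lambda$ a positive function, the differential $df$ is a metric-independent $1$-form, while the gradient depends on the metric through the musical isomorphism. Since $ds_0^2 = \lambda^2\, dx^2$ with $\lambda^2 = \tfrac{4}{(1+r^2)^2}$, the first step is to recall that the gradient under $ds_0^2$ is given by raising indices with the inverse metric, which introduces a factor of $\lambda^{-2}$ relative to the Euclidean gradient.

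More concretely, I will characterize $\nabla_0 f$ by the defining relation $ds_0^2(\nabla_0 f, Y) = df(Y) = dx^2(\nabla f, Y)$ for every tangent vector $Y$. Since $ds_0^2(\nabla_0 f, Y) = \lambda^2\, dx^2(\nabla_0 f, Y)$, this forces the pointwise identity
\begin{equation*}
\nabla_0 f = \lambda^{-2} \nabla f = \frac{(1+r^2)^2}{4}\nabla f.
\end{equation*}
Taking the $ds_0^2$-norm squared of both sides and using $ds_0^2(\cdot,\cdot) = \lambda^2\, dx^2(\cdot,\cdot)$, I get
\begin{equation*}
|\nabla_0 f|^2_{ds_0^2} = \lambda^2 \cdot \lambda^{-4}\, |\nabla f|^2_{dx^2} = \lambda^{-2}\, |\nabla f|^2_{dx^2} = \frac{(1+r^2)^2}{4}|\nabla f|^2_{dx^2},
\end{equation*}
which is precisely the claim.

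There is essentially no obstacle here; the statement is a one-line consequence of the transformation law of the musical isomorphism under a conformal change. The only thing to be mindful of is a bookkeeping check: the exponent of $\lambda$ in $|\nabla_0 f|^2_{ds_0^2}$ is $-2$ (not $-4$), because one factor of $\lambda^{-2}$ from each of the two gradients is partially cancelled by one factor of $\lambda^{2}$ from the metric inner product. This identity will then be applied in the subsequent Lipschitz estimate by translating Euclidean gradient bounds for $\mathrm{div}^G_S X$ (computed in the previous subsection) into bounds with respect to the round metric $ds_0^2$, with the conformal factor $\tfrac{(1+r^2)^2}{4}$ compensating for the decay of $\mathrm{div}^G_S X$ near infinity on $S^3$.
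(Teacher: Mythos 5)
Your proposal is correct: the identity $\nabla_0 f = \lambda^{-2}\nabla f$ with $\lambda^2 = \tfrac{4}{(1+r^2)^2}$, followed by taking the $ds_0^2$-norm, gives exactly \eqref{conformal change of gradient}, and the exponent bookkeeping ($\lambda^{-2}$, not $\lambda^{-4}$) is handled correctly. The paper states this lemma without proof as a standard conformal-change fact, and your computation is precisely the intended argument.
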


Denote $f(x, S)=div^G_S X$ (\ref{divergence of radial vector field}).
\begin{lemma}\label{Lipschitz bound}
$f(x, S)$ extends to a Lipschitz function on $G_2(S^3)$ with respect to the round metric $ds^2_0$.
\end{lemma}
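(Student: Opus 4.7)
Away from the point at infinity the function $f$ is smooth and compactly supported in $G_2(\{r\ge\rho\})$, so the only issue is to extend $f$ Lipschitz-continuously across $\infty\in S^3$. I will (i) identify the limiting value of $f$ at $\infty$, (ii) estimate the Euclidean gradient of $f$, and (iii) convert to the round metric using the conformal factor in \eqref{conformal change of gradient}.

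\emph{Step 1 (identify the value at $\infty$).} For $r\ge 2\rho$, (\ref{asymptotics for divergence of radial vf}) gives
\[
f(x,S)=\frac{2|\nabla^{\perp}r|^2}{r^2}-\frac12.
\]
Writing $S^\perp$ as the span of a unit vector $\nu$ and using $\nabla r=\vec x/r$, we have $|\nabla^{\perp}r|^2=\langle \vec x/r,\nu\rangle^2$, hence
\[
f(x,S)=\frac{2\langle\vec x,\nu\rangle^2}{r^4}-\frac12.
\]
The Cauchy--Schwarz bound $|\langle\vec x,\nu\rangle|\le r$ shows $|f(x,S)+\tfrac12|\le 2/r^2$, uniformly in $S$. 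Thus setting $f(\infty,S):=-\tfrac12$ produces a continuous extension to $G_2(S^3)$.

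\emph{Step 2 (bound the Euclidean gradient).} Differentiating the explicit expression above in the base variable $x$ gives, for $r\ge 2\rho$,
\[
|\nabla_x f(x,S)|_{dx^2}\le \frac{C}{r^3},
\]
while differentiating in the Grassmannian fiber variable $\nu$ (along directions $\mu\perp\nu$ on the unit sphere) yields $\partial_\mu f=4\langle\vec x,\nu\rangle\langle\vec x,\mu\rangle/r^4$, so
\[
|\nabla_{Gr}f(x,S)|\le \frac{C}{r^2}.
\]
The Grassmannian fiber metric is scale invariant, so this bound is insensitive to the conformal change on the base. In the transition region $\rho\le r\le 2\rho$ the function $f$ is smooth and compactly supported, giving a uniform Lipschitz bound there.

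\emph{Step 3 (convert to the round metric).} Applying (\ref{conformal change of gradient}) to the base part,
\[
|\nabla_0 f|^2_{ds_0^2}=\frac{(1+r^2)^2}{4}|\nabla_x f|^2_{dx^2}\le \frac{(1+r^2)^2}{4}\cdot\frac{C^2}{r^6}\longrightarrow 0 \quad\text{as }r\to\infty.
\]
Combined with the fiber bound from Step~2, this shows that the total gradient of $f$ on $G_2(S^3)$ (with respect to the Sasaki-type metric induced by $ds_0^2$) is uniformly bounded, and in fact vanishes at $\infty$. Integrating along minimizing geodesics of $G_2(S^3)$ yields the global Lipschitz estimate, proving the lemma.

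\emph{Main obstacle.} The only delicate point is to verify that the decay $|\nabla_x f|_{dx^2}=O(r^{-3})$ is fast enough to counteract the conformal blow-up $(1+r^2)^2/4=O(r^4)$ of the round metric near the north pole. The exact cancellation produces a $1/r^2$ decay, which is what makes the Lipschitz extension possible; a slower Euclidean decay rate would have been insufficient.
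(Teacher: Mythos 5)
Your proof is correct and follows essentially the same route as the paper: bound the fiber derivative uniformly (the fiber metrics agree since the conformal change preserves angles), show the Euclidean base gradient of $\mathrm{div}^G_S X$ decays like $r^{-3}$ away from the cutoff region, and use the conformal factor $(1+r^2)^2/4$ from \eqref{conformal change of gradient} to conclude that the round-metric gradient stays bounded (indeed decays like $r^{-1}$), with the transition annulus $\rho\le r\le 2\rho$ handled by smoothness on a compact set. The only cosmetic difference is that you compute $|\nabla^{\perp}r|^2=\langle\vec x,\nu\rangle^2/r^2$ explicitly instead of invoking the asymptotics of $\nabla\nabla r$ as the paper does.
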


\begin{remark}
This fact shows that the first variation $\de_G V(X)$ (see \ref{1st variation 4}) depends continuously on $V$ under the $\bF_{S^3}$ norm.
\end{remark}

\begin{proof}
It is easy to see that the Lipschitz constant of $f(x, S)$ with respect to the $S$-variable is uniformly bounded on $\R^3$. As the fiber part of $G_2(\R^3)$ and $G_2(S^3)$ are isometric, to show that $f(x, S)$ extends to a Lipschitz function on $G_2(S^3)$, we only need to show that $|\nabla_0 f(x, S_0)|^2_{ds^2_0}$ is uniformly bounded for a fixed 2-plane $S_0$. Here we use (\ref{conformal change of gradient}) to connect $|\nabla_0 f(x, S_0)|^2_{ds^2_0}$ to $|\nabla f(x, S_0)|^2_{dx^2}$. In fact,
\begin{displaymath}
\begin{split}
\nabla f &= \phi(\frac{r}{\rho})\big(2\nabla \frac{|\nabla^{\perp} r|^2}{r^2}\big)+\phi^{\pr}(\frac{r}{\rho})\big(\frac{2|\nabla^{\perp} r|^2}{r^2}-\frac{1}{2}\big)\frac{\nabla r}{\rho}+\phi^{\pr}(\frac{r}{\rho})\frac{|\nabla_S r|^2}{r^2}\frac{\nabla r}{\rho}\\
             &+\frac{r}{\rho}\phi^{\pr\pr}(\frac{r}{\rho})\frac{|\nabla_S r|^2}{r^2}\frac{\nabla r}{\rho}+\frac{r}{\rho}\phi^{\pr}(\frac{r}{\rho})\big(\nabla \frac{|\nabla_S r|^2}{r^2}\big).
\end{split}
\end{displaymath}
In the above formula, terms with compact support in $\R^3$, i.e. those containing $\phi^{\pr}$ or $\phi^{\pr\pr}$, are all bounded with respect to the round metric $ds^2_0$. Therefore, we only need to take care of the first term, where
$$\nabla \frac{|\nabla^{\perp} r|^2}{r^2}=-2|\nabla^{\perp} r|^2\frac{\nabla r}{r^3}+\frac{2}{r^2}\lan \nabla^{\perp} r, \nabla\nabla^{\perp} r\ran.$$
Here $\nabla\nabla r$ is asymptotic to $\frac{1}{r}$ when $r\rightarrow\infty$, so
$$\big|\nabla \frac{|\nabla^{\perp} r|^2}{r^2}\big|^2_{dx^2}\textrm{ is asymptotic to }\frac{1}{r^6} \textrm{ as } r\rightarrow \infty.$$
Using (\ref{conformal change of gradient}), $|\nabla _0 f|^2_{ds^2_0}$ is asymptotic to $\frac{(1+r^2)^2}{r^6}$ as $r\rightarrow \infty$. So $|\nabla _0 f|^2_{ds^2_0}$ is uniformly bounded on $\R^3$, and the proof is finished.
\end{proof}

\subsection{Push-forward of 2-varifolds in $\V_2^G(\R^3)$ by radial vector field}

In this part, we give an explicit expression for the push-forward of 2-varifolds in $\V_2(\R^3)$ under the flow of the radial vector field $X=\frac{\vec{x}}{r^2}$. Let
\begin{equation}\label{flow of radial vf}
f_t: x\rightarrow \sqrt{1+\frac{2t}{r^2}}x
\end{equation}
be the flow associated with $X$.
\begin{lemma}
The Jacobian of $f_t$ under the Gaussian metric $g^G$ is given by
\begin{equation}\label{Jacobian of radial flow 1}
J^G f_t(x, S)=\Big(1+\big[(1+\frac{2t}{r^2})^2-1\big]|\nabla^{\perp}r|^2\Big)^{1/2}e^{-\frac{t}{2}},
\end{equation}
where $\nabla^{\perp}r$ is the projection of $\nabla r$ onto the orthogonal complement of $S$ under the Euclidean metric $dx^2$. Therefore, given $V\in \V_2^G(\R^3)$, then for any $A\subset\subset G_2(\R^3\backslash\{0\})$,
\begin{equation}\label{push forward of radial flow 1}
(f_t)_{\#}(V^G)(A)=\int_{F_t^{-1}(A)}\Big(1+\big[(1+\frac{2t}{r^2})^2-1\big]|\nabla^{\perp}r|^2\Big)^{1/2}e^{-\frac{t}{2}}dV^G(x, S),
\end{equation}
where $F_t(x, S)=\big(f_t(x), d(f_t)_xS\big)$.
\end{lemma}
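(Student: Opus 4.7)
The plan is to reduce both formulas to a direct computation of the differential $df_t$, since the push-forward formula (\ref{push forward of radial flow 1}) follows immediately from the Jacobian formula (\ref{Jacobian of radial flow 1}) via the general identity (\ref{push-forward in Gaussian metric}). First I would check that $f_t(x) = \sqrt{1 + 2t/r^2}\,x$ really is the flow of $X = \vec{x}/r^2$ by differentiating in $t$; a convenient byproduct is the identity $|f_t(x)|^2 = r^2 + 2t$, which supplies the Gaussian conformal factor essentially for free at the end.

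Next I would compute the Euclidean Jacobian $Jf_t(x, S)$. Setting $s = s(r,t) = \sqrt{1 + 2t/r^2}$ and $\hat{x} = \vec{x}/r$, a short differentiation gives
\begin{equation*}
df_t(v) = s\, v + \Bigl(\tfrac{1}{s} - s\Bigr)(v \cdot \hat{x})\,\hat{x},
\end{equation*}
so $df_t$ acts by $1/s$ on $\hat{x}$ and by $s$ on any direction tangent to the sphere of radius $r$. I would then fix an orthonormal basis $\{v, w\}$ of $S$ together with a unit normal $n \in S^\perp$ chosen so that $\hat{x} = a v + b w + c n$ with $a^2 + b^2 = |\nabla_S r|^2$ and $c^2 = |\nabla^\perp r|^2$. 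Expanding $df_t(v) \wedge df_t(w)$ in the orthonormal basis $\{v \wedge w,\, n \wedge w,\, v \wedge n\}$ of the relevant subspace of $\Lambda^2$, and using $a^2 + b^2 + c^2 = 1$, the mixed terms rearrange so that
\begin{equation*}
Jf_t(x, S)^2 = 1 + (s^4 - 1)\,|\nabla^\perp r|^2,
\end{equation*}
which is exactly the Euclidean factor in (\ref{Jacobian of radial flow 1}).

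Finally I would convert to the Gaussian Jacobian using (\ref{Jacobian in Gaussian metric}). Since $|f_t(x)|^2 - |x|^2 = 2t$, the conformal ratio $e^{-|f_t(x)|^2/4}/e^{-|x|^2/4}$ is exactly $e^{-t/2}$, and multiplying produces the claimed expression for $J^G f_t(x, S)$. Plugging this into the push-forward definition (\ref{push-forward in Gaussian metric}) yields (\ref{push forward of radial flow 1}) with no further work.

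The main obstacle, modest as it is, is the algebraic bookkeeping in the wedge-product expansion. One has to verify that the contributions from the radial correction $(\tfrac{1}{s} - s)(v \cdot \hat{x})\hat{x}$ at each of the two wedge factors combine with the leading $s^2(v \wedge w)$ term so that the final Jacobian depends only on $|\nabla^\perp r|^2$, and not on the more detailed decomposition of $\hat{x}$ within $S$. Once the adapted frame $\{v, w, n\}$ is in place this is a direct calculation, and simple sanity checks (purely radial planes give $Jf_t = 1$; purely sphere-tangent planes give $Jf_t = s^2$) confirm the answer.
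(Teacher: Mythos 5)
Your proposal is correct and follows essentially the same route as the paper: compute $df_t$ explicitly, extract the Euclidean Jacobian $Jf_t(x,S)=\sqrt{1+[(1+\frac{2t}{r^2})^2-1]|\nabla^{\perp}r|^2}$, multiply by the conformal ratio $e^{-t/2}$ coming from $|f_t(x)|^2=r^2+2t$, and then read off the push-forward formula from (\ref{push-forward in Gaussian metric}). The only cosmetic difference is that you evaluate the Jacobian as $|df_t(v)\wedge df_t(w)|$ in an adapted frame, whereas the paper's appendix computes $\sqrt{\det[(df_t)^*(df_t)]}$ using the rank-one determinant identity; both yield the same expression.
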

\begin{remark}\label{remark of push forward 1}
In the following, we will use the flows $\ti{f}_t$ defined by vector fields which are equal to $\frac{\vec{x}}{r^2}$ near $\infty$, e.g. (\ref{radial vector field}). The Jacobian $J^G\ti{f}_t$ and the push-forward formula are the same as those of $f_t$ around $\infty$.
\end{remark}
\begin{proof}
First, the Jacobian $Jf_t(x, S)$ of $f_t$ with respect to the Euclidean metric $dx^2$ is given by (see \S \ref{calculation of Jacobian of ft}):
$$Jf_t(x, S)=\Big(1+\big[(1+\frac{2t}{r^2})^2-1\big]|\nabla^{\perp}r|^2\Big)^{1/2}.$$
Then by (\ref{Jacobian in Gaussian metric}),
$$J^G f_t(x, S)=Jf_t(x, S)\frac{e^{-\big|\sqrt{1+\frac{2t}{r^2}}x\big|^2/4}}{e^{-|x|^2/4}}=Jf_t(x, S)e^{-\frac{t}{2}}.$$
The lemma then follows from (\ref{push-forward in Gaussian metric}).
\end{proof}

\subsection{Push-forward of 2-varifolds in $\V_2(S^3)$ by radial vector field}

Using the push-forward formula (\ref{push forward of radial flow 1}), we can define the ``push-forward" of 2-varifolds in $\V_2(S^3)$ by $f_t$ as follows. In fact, the integrand in (\ref{push forward of radial flow 1}) has a limit as $r \rightarrow\infty$, i.e.
$$\lim_{r\rightarrow\infty}\Big(1+\big[(1+\frac{2t}{r^2})^2-1\big]|\nabla^{\perp}r|^2\Big)^{1/2}e^{-\frac{t}{2}}=e^{-\frac{t}{2}}.$$
So the integrand extends to a continuous function around $\infty$ in $G_2(S^3)$ by identifying $S^3\cong \R^3\cup\{\infty\}$. Also, we can extend the tangential map $F_t: G_2(\R^3)\rightarrow G_2(\R^3)$ by letting $F_t(\infty, S)=(\infty, S)$, and $F_t$ is continuous from $G_2(S^3)$ to $G_2(S^3)$ by (\ref{differential of ft}). Therefore, we have
\begin{definition}\label{push forward of 2-varifolds on S3 by radial vector field}
Given $V\in \V_2(S^3)$, the {\em ``push-forward"} of $V$ under $f_t$ (\ref{flow of radial vf}) is defined as:
\begin{equation}\label{push forward of radial flow 2}
(f_t)_{\#}V(A)=\int_{F_t^{-1}(A)}\Big(1+\big[(1+\frac{2t}{r^2})^2-1\big]|\nabla^{\perp}r|^2\Big)^{1/2}e^{-\frac{t}{2}}dV(x, S),
\end{equation}
for any $A\subset\subset G_2\big((\R^3\cup\{\infty\})\backslash\{0\}\big)$.
\end{definition}

As in Remark \ref{remark of push forward 1}, we also need to define the push-forward of varifolds in $\V_2(S^3)$ by flows $\ti{f}_t$ defined by vector fields which are equal to $\frac{\vec{x}}{r^2}$ near $\infty$. As $\ti{f}_t=f_t$ near $\infty$, the Jacobian $J^G\ti{f}_t(x, S)$ is equal to the Jacobian $J^G f_t(x, S)$ around $\infty$, so $J^G\ti{f}_t(x, S)$ also extends to a continuous function on $G_2(S^3)=G_2(\R^3\cup\{\infty\})$. Also the tangent map $\ti{F}_t$ extends to a continuous map from $G_2(S^3)\rightarrow G_2(S^3)$ as $F_t$. We can define the push-forward $(\ti{f}_t)_{\#}V$ in the same way, i.e.
\begin{definition}\label{push forward of 2-varifolds on S3}
For any $V\in \V_2(S^3)$ and $A\subset G_2(S^3)$,
\begin{equation}\label{push forward of radial flow 3}
(\ti{f}_t)_{\#}V(A)=\int_{\ti{F}_t^{-1}(A)}J^G\ti{f}_t(x, S)dV(x, S).
\end{equation}
\end{definition}

Also we have the following corollary,
\begin{lemma}\label{continuity of push forward by radial flow}
The map $t\rightarrow (\ti{f}_t)_{\#}V$ is continuous from $\R^+$ to $\V_2(S^3)$.
\end{lemma}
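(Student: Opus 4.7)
The plan is to deduce continuity in the $\bF_{S^3}$ metric from a uniform change-of-variables estimate, using the fact that $G_2(S^3)$ is compact so that pointwise continuity of the integrand in $t$ upgrades to uniform continuity in $(x,S)$.

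First I would record the continuity properties of the geometric data. By the construction preceding Definition \ref{push forward of 2-varifolds on S3}, the tangent map $\ti F_t : G_2(S^3) \to G_2(S^3)$ and the weighted Jacobian $J^G \ti f_t$ both extend continuously to the one-point compactification (using equation (\ref{Jacobian of radial flow 1}) and the asymptotic $J^G \ti f_t \to e^{-t/2}$ as $r \to \infty$). The maps $(t,x,S) \mapsto \ti F_t(x,S)$ and $(t,x,S) \mapsto J^G \ti f_t(x,S)$ are jointly continuous on $[0,\infty) \times G_2(S^3)$; since $G_2(S^3)$ is compact, fixing $t_0$ and restricting $t$ to a small closed neighborhood yields uniform continuity: for every $\ep>0$ there is $\de>0$ such that whenever $|t-t_0|<\de$,
\begin{equation*}
\sup_{(x,S)} d_{G_2(S^3)}\bigl(\ti F_t(x,S),\ti F_{t_0}(x,S)\bigr) < \ep, \qquad \sup_{(x,S)} |J^G \ti f_t - J^G \ti f_{t_0}| < \ep.
\end{equation*}
Moreover $J^G \ti f_t$ is bounded on $G_2(S^3)$ by a constant $M$ uniform in $t$ near $t_0$.

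Next I would estimate the $\bF_{S^3}$-distance by testing against $f \in C^0(G_2(S^3))$ with $|f|\leq 1$ and $\mathrm{Lip}(f)\leq 1$. Using Definition \ref{push forward of 2-varifolds on S3}, we have
\begin{equation*}
\int f\, d(\ti f_t)_\# V - \int f\, d(\ti f_{t_0})_\# V = \int_{G_2(S^3)} \Bigl[f(\ti F_t) J^G \ti f_t - f(\ti F_{t_0}) J^G \ti f_{t_0}\Bigr] dV(x,S).
\end{equation*}
Adding and subtracting $f(\ti F_{t_0}) J^G \ti f_t$ and applying the triangle inequality together with the Lipschitz property of $f$, the integrand is bounded pointwise by
\begin{equation*}
|J^G \ti f_t|\cdot d_{G_2(S^3)}(\ti F_t,\ti F_{t_0}) + |f(\ti F_{t_0})|\cdot |J^G \ti f_t - J^G \ti f_{t_0}| \leq (M+1)\ep,
\end{equation*}
uniformly over $(x,S)\in G_2(S^3)$ and uniformly over the admissible class of $f$.

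Integrating against $dV$ and taking the supremum over $f$ gives
\begin{equation*}
\bF_{S^3}\bigl((\ti f_t)_\# V, (\ti f_{t_0})_\# V\bigr) \leq (M+1)\ep\cdot \|V\|\bigl(G_2(S^3)\bigr),
\end{equation*}
which tends to $0$ as $t\to t_0$ since $\|V\|(G_2(S^3))<\infty$. No real obstacle arises here: the only point requiring care is the continuous extension of $J^G \ti f_t$ and $\ti F_t$ to the point at infinity, and that has already been established. The remainder is a routine dominated-convergence-type argument made uniform by compactness of the target.
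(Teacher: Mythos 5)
Your proposal is correct and follows essentially the same route as the paper: both arguments reduce the claim to sup-norm continuity in $t$ of $J^G\ti{f}_t$ and $\ti{F}_t$ on the compact space $G_2(S^3)$ and then pass to the pushforward, with you spelling out the final $\bF_{S^3}$ test-function estimate that the paper leaves implicit. The one point you assert rather than prove — joint continuity of $(t,x,S)\mapsto J^G\ti{f}_t(x,S)$ and $(t,x,S)\mapsto\ti{F}_t(x,S)$ up to the point at infinity — is exactly what the paper justifies by splitting into a compact piece of $\R^3$ (ODE theory) and a neighborhood of $\infty$ where $\ti{f}_t=f_t$ is given explicitly by (\ref{flow of radial vf}), (\ref{Jacobian of radial flow 1}) and (\ref{differential of ft}); this is a routine addition, not a gap.
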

\begin{proof}
We only need to show that the maps $t\rightarrow J^G\ti{f}_t(x, S)$ and $t\rightarrow \ti{F}_t$ are continuous from $\R^+$ to $C^0\big(G_2(S^3)\big)$ and $C^0\big(G_2(S^3), G_2(S^3)\big)$ under the $L^{\infty}$-norms respectively.

Given any compact subset $K$ of $\R^3$, the map $t\rightarrow J^G\ti{f}_t(x, S)$ is continuous under the $C^k\big(G_2(K)\big)$-norm ($k\geq 0$) by usual ODE theory. Given a compact neighborhood $K^{\pr}$ of $\infty$ (in $S^3$) where $\ti{f}_t=f_t$, by (\ref{Jacobian of radial flow 1}), the map $t\rightarrow J^G\ti{f}_t(x, S)$ is continuous under $L^{\infty}(G_2(K^{\pr}))$-norm. The continuity of $t\rightarrow J^G\ti{f}_t(x, S)$ follows by combining the continuity on $K$ and $K^{\pr}$.

Note that $\ti{F}_t(x, S)=(\ti{f}_t(x), d(\ti{f}_t)_x S)$. The continuity of the map $t\rightarrow \ti{f}_t(x)$ from $\R^+$ to $C^0(S^3, S^3)$ follows from the same argument as above. The continuity of the map $t\rightarrow d(\ti{f}_t)_x(S)$ from $\R^+$ to $C^0\big(G_2(S^3), G_2(S^3)\big)$ also follows from similar argument as above but using (\ref{differential of ft}) in place of (\ref{Jacobian of radial flow 1}). 
\end{proof}

We have defined the first variation for 2-varifolds in $\V_2(S^3)$ with respect to $g^G$ by restricting to $\R^3=S^3\backslash\{\infty\}$ (\ref{1st variation 3}). For the special radial vector fields  $X$ (\ref{radial vector field}), using this notion of ``push-forward" in (\ref{push forward of radial flow 3}), we can define the {\em first variation formula} of 2-varifold $V\in \V_2(S^3)$ with respect to $g^G$ on $S^3$.
\begin{equation}\label{1st variation 4}
\begin{split}
\de_G V(X) &:= \frac{d}{dt}\bigg{|}_{t=0}\|(\ti{f}_t)_{\#}V\|(\R^3\cup \{\infty\})\\
                   & = \frac{d}{dt}\bigg{|}_{t=0}\int_{G_2(\R^3\cup\{\infty\})}J^G\ti{f}_t(x, S)dV(x, S)\\
                   & = \int_{G_2(\R^3\cup\{\infty\})}div^G_S X(x) dV(x, S)\\
                   & =\int_{G_2(\R^3\cup\{\infty\})}\Big[\phi(\frac{r}{\rho})\big(\frac{2|\nabla^{\perp}r|^2}{r^2}-\frac{1}{2}\big)+\frac{r}{\rho}\phi^{\pr}(\frac{r}{\rho})\frac{|\nabla_S r|^2}{r^2}\Big]dV(x, S).
 \end{split}
\end{equation}
Here $\lim_{r\rightarrow\infty}div^G_S X(x)=-\frac{1}{2}$ by (\ref{asymptotics for divergence of radial vf}).


\section{Constructing tightening vector field}\label{Constructing tightening vector field}

Given a continuous family $\{\Si_{\nu}\}_{\nu\in I^n}$ and the associated saturated set $\La$ with min-max value $W(\La)$, let $L=2W(\La)>0$. Consider the set of 2-varifolds in $\V_2(S^3)$ with bounded mass: $A=\{V\in \V_2(S^3):\ \|V\|(S^3)\leq L\}$. Let $B=\{\Si^G_{\nu}: \nu\in\partial I^n\}$ ($\Si^G_{\nu}$ denotes the weighted varifold, c.f. \S \ref{notation}). Denote
$$\bar{A}_0=\big\{V\in A: \de_G V=0 \textrm{ on } \R^3\cong S^3\backslash \{\infty\}, \textrm{ and }\|V\|(\{\infty\})=0\big\}.$$
Let $A_0=\bar{A}_0\cup B$. Consider the concentric annuli around $A_0$ under the $\bF_{S^3}$-metric, i.e.
$$A_1=\big\{V\in A: \bF_{S^3}(V, A_0)\geq 1\big\}$$
$$A_j=\big\{V\in A: \frac{1}{2^j}\leq \bF_{S^3}(V, A_0)\leq \frac{1}{2^{j-1}}\big\}, \quad j\in\N, j\geq 2.$$

\begin{lemma}\label{A0 is compact}
$A_0$ is a compact subset of $A$ under the $\bF_{S^3}$-metric.
\end{lemma}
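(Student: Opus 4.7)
The plan is to combine the compactness of the ambient mass-bounded set $A$ with the closedness of $A_0$. Since $G_2(S^3)$ is compact, a Banach--Alaoglu-type argument applied to Radon measures of total mass at most $L$ makes $A$ itself $\bF_{S^3}$-compact, so it suffices to show $A_0$ is closed. The subset $B$ is the image of the compact parameter set $\partial I^n$ under the continuous map $\nu \mapsto \Si_\nu^G$ (continuity is built into Definition \ref{continuous family}), hence $B$ is already compact. The real task is therefore to show that if $V_k \in \bar{A}_0$ and $V_k \to V$ in $\bF_{S^3}$, then $V \in \bar{A}_0$; a mixed sequence in $A_0$ can be split into subsequences lying in $\bar{A}_0$ and in $B$ and handled separately.

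First I would verify the $F$-stationarity of the limit $V$. For any $X \in \X_c(\R^3)$ the function $div^G_S X$ is continuous and compactly supported on $G_2(\R^3)$, hence extends by zero to a continuous function on the compact space $G_2(S^3)$. Since $\bF_{S^3}$ metrizes weak-$*$ convergence on mass-bounded subsets, $\int div^G_S X\, dV_k \to \int div^G_S X\, dV$, so by \eqref{1st variation 3} we obtain $\de_G V(X) = \lim_k \de_G V_k(X) = 0$.

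The main work is ruling out escape of mass to infinity, namely $\|V\|(\{\infty\}) = 0$. The idea is to test against the non-compactly supported radial vector field $X_\rho = \phi(r/\rho)\vec{x}/r^2$ of \eqref{radial vector field}, whose Gaussian divergence, by Lemma \ref{Lipschitz bound}, extends to a Lipschitz function on $G_2(S^3)$ that takes the nonzero value $-\tfrac{1}{2}$ on the entire fiber over $\{\infty\}$. I would first show $\de_G V_k(X_\rho) = 0$ by a standard cutoff: let $\chi_R$ be a smooth bump equal to $1$ on $B_R$, supported in $B_{2R}$, with $|\nabla \chi_R| \leq C/R$. Then $\chi_R X_\rho$ has compact support, and $F$-stationarity of $V_k$ yields
\begin{equation*}
0 = \int \chi_R\, div^G_S X_\rho\, dV_k + \int \lan \nabla_S \chi_R,\, X_\rho \ran\, dV_k.
\end{equation*}
The cross term is bounded by $(C/R)\cdot(1/R)\cdot\|V_k\|(S^3) \leq CL/R^2$ (using $|X_\rho| \leq 1/R$ on $B_{2R}\setminus B_R$ once $R$ is large), and dominated convergence forces the main term to $\int div^G_S X_\rho\, dV_k = \de_G V_k(X_\rho)$ (using $\|V_k\|(\{\infty\}) = 0$), so $\de_G V_k(X_\rho) = 0$. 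Passing to the $\bF_{S^3}$-limit, using the Lipschitz extension from Lemma \ref{Lipschitz bound}, gives $\de_G V(X_\rho) = 0$. Splitting \eqref{1st variation 4} into the part on $G_2(\R^3)$ and the fiber over $\infty$, where by \eqref{asymptotics for divergence of radial vf} the integrand equals $-\tfrac{1}{2}$, produces
\begin{equation*}
0 = \de_G V(X_\rho) = \int_{G_2(\R^3)} div^G_S X_\rho\, dV \; - \; \tfrac{1}{2} \|V\|(\{\infty\}).
\end{equation*}
Letting $\rho \to \infty$, the first term vanishes, because its integrand is uniformly bounded, supported on $\{r \geq \rho\}$, and $\|V\|(\R^3) \leq L$ is finite; hence $\|V\|(\{\infty\}) = 0$.

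The principal technical obstacle is this last maneuver of detecting the point-mass at infinity by a single Lipschitz-divergence test; everything else is standard varifold topology. The success of this final step is precisely what Section \ref{An important vector field to deform the delta-mass} has been engineered to supply, and it is the same mechanism that will drive the pull-tight argument in Proposition \ref{pull tight}.
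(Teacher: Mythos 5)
Your proof is correct, but it takes a genuinely different route from the paper's. You reduce compactness to closedness (using the ambient weak-$*$ compactness of the mass-bounded set $A$ and the compactness of $B$), and you rule out a point mass at $\infty$ in the limit by a soft duality argument: you first upgrade the $F$-stationarity of each $V_k\in\bar{A}_0$ to the non-compactly supported radial field $X_\rho=\phi(r/\rho)\vec{x}/r^2$ via a cutoff (the error term is $O(L/R^2)$ since $|X_\rho|\lesssim 1/R$ on the annulus), then pass to the $\bF_{S^3}$-limit using the Lipschitz extension of $div^G_S X_\rho$ to $G_2(S^3)$ from Lemma \ref{Lipschitz bound}, split off the fiber over $\infty$ where the integrand equals $-\tfrac12$, and let $\rho\to\infty$ to force $\|V\|(\{\infty\})=0$. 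The paper instead proves a \emph{uniform, quantitative} decay estimate for every $V\in\bar{A}_0$: writing $V=\frac{1}{4\pi}e^{-|x|^2/4}V'$ with $V'$ $F$-stationary, it compares $F_{(0,R^2)}(V')$ with $F_{(0,1)}(V')$ using Theorem \ref{entropy achieve at (0 1)} (whose proof rests on the polynomial volume growth of Proposition \ref{polynomial volume growth}) to get $\|V\|(B_R^c)\leq L\,R^2e^{-\frac{R^2}{4}(1-\frac{1}{R^2})}$, i.e.\ uniform tightness of the whole family, from which compactness follows. The trade-off: your argument is more elementary and self-contained in that it bypasses the appendix entirely, reusing only the radial vector field computations already needed for the tightening map; the paper's argument yields a stronger statement of independent interest (a uniform exponential mass decay for all $F$-stationary varifolds of bounded Gaussian mass), not merely sequential closedness. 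One small point to make explicit if you write this up: the continuity of $\nu\mapsto\Si_\nu^G$ into $(\V_2(S^3),\bF_{S^3})$, which you invoke for compactness of $B$, requires the short argument the paper gives later (locally smooth convergence plus convergence of total Gaussian area implies varifold convergence), not just Definition \ref{continuous family} verbatim.
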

\begin{proof}
We only need to show that $\bar{A}_0$ is compact. To prove this, we only need to show that every $V\in \bar{A}_0$ has uniformly small mass near $\infty\in S^3$. Denote $V=\frac{1}{4\pi}e^{-\frac{|x|^2}{4}}V^{\pr}$, where $V^{\pr}\in \V_2^G(\R^3)$; then $V^{\pr}$ is $F$-stationary by the definition of $\bar{A}_0$.

Letting $(x_0, t_0)=(0, R^2)$ for $R\gg 1$ in (\ref{Gaussian volume at (x t)}), and using Theorem \ref{entropy achieve at (0 1)}, we have
$$F_{(0, R^2)}(V^{\pr})\leq \la(V^{\pr})=F_{(0, 1)}(V^{\pr})=\int_{\R^3}\frac{1}{4\pi}e^{-\frac{|x|^2}{4}}d\|V^{\pr}\|=\|V\|(S^3).$$
On the other hand,
\begin{displaymath}
\begin{split}
F_{(0, R^2)}(V^{\pr}) & = \int_{\R^3}\frac{1}{4\pi R^2}e^{-\frac{|x|^2}{4R^2}}d\|V^{\pr}\| = \int_{\R^3}\frac{1}{4\pi R^2}e^{-\frac{|x|^2}{4R^2}}\cdot 4\pi e^{\frac{|x|^2}{4}}d\|V\|\\
                                  & \geq \int_{|x|\geq R}\frac{1}{R^2}e^{(1-\frac{1}{R^2})\frac{|x|^2}{4}}d\|V\|\\
                                  & \geq \frac{1}{R^2}e^{(1-\frac{1}{R^2})\frac{R^2}{4}}\|V\|(B_R^c),
\end{split}
\end{displaymath}
where $B_R^c$ is the complement of the ball $B_R$ centered at $0$ of radius $R$.

Combining the above, we have
$$\|V\|(B_R^c)\leq \|V\|(S^3)R^2e^{-\frac{R^2}{4}(1-\frac{1}{R^2})},$$
which shows that $V$ has uniformly small mass near $\infty$, and hence finish the proof.
\end{proof}

In the next lemma, we show that for any 2-varifold in $A_j$, we can find a vector field, along which the first variation is bounded from above by a fixed negative number depending only on $j$.
\begin{lemma}\label{vf to deform mass}
For any $V\in A_j$, there exists $X_V\in\X(\R^3)$, such that either $X_V\in\X_c(\R^3)$, or $X_V=\frac{\vec{x}}{r^2}$ near $\infty$, and
\begin{equation}\label{1st variation upper bounded}
\|X_V\|_{C^1(\R^3)}\leq 1, \quad \de_G V(X_V)\leq-c_j<0,
\end{equation}
for some $c_j$ depending only on $j$, and also $div^G_S X_V\in Lip\big(G_2(S^3)\big)$.
\end{lemma}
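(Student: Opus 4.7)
The plan is to produce $X_V$ by a dichotomy based on whether $V$ carries mass at $\infty$, and then to upgrade pointwise strict negativity of $\de_G V(X_V)$ to a uniform bound $c_j > 0$ via compactness of $A_j$ in the $\bF_{S^3}$ topology.

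For a fixed $V \in A_j$ I would first split into two cases. \textbf{Case A:} $\|V\|(\{\infty\}) = m > 0$. I would take $X_\rho = \phi(r/\rho)\vec{x}/r^2$ as in (\ref{radial vector field}) with $\rho > 3$, so that $\|X_\rho\|_{C^1}$ is controlled by Lemma \ref{C1 bound for X} and $div^G_S X_\rho \in Lip(G_2(S^3))$ by Lemma \ref{Lipschitz bound}. Using (\ref{1st variation 4}) and the asymptotics (\ref{asymptotics for divergence of radial vf}), $div^G_S X_\rho$ is supported in $\{r \ge \rho\} \cup \{\infty\}$ on $G_2(S^3)$, uniformly bounded in $\rho$, and extends continuously to the constant $-1/2$ at $\infty$. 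The $G_2(\R^3)$-contribution is bounded in absolute value by a constant times $\|V\|(\R^3 \setminus B_\rho)$, which tends to $0$ as $\rho \to \infty$ since $V$ is a finite Radon measure on $G_2(S^3)$; the contribution at $\{\infty\}$ equals $-m/2$. Hence $\de_G V(X_\rho) \to -m/2$, and choosing $\rho$ large gives $\de_G V(X_\rho) \le -m/4 < 0$. Note $X_\rho$ agrees with $\vec{x}/r^2$ on $\{r \ge 2\rho\}$. \textbf{Case B:} $\|V\|(\{\infty\}) = 0$. Since $V \in A_j$ with $j \ge 1$, $V \notin \bar{A}_0$, so $V$ viewed as a varifold on $\R^3$ is not $F$-stationary. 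Hence there exists $X_0 \in \X_c(\R^3)$ with $\de_G V(X_0) \ne 0$; after replacing $X_0$ by $-X_0$ and rescaling so $\|X_0\|_{C^1} \le 1$, we have $\de_G V(X_0) < 0$. For compactly supported $X_0$, $div^G_S X_0$ is smooth with compact support, hence extends as a Lipschitz function on $G_2(S^3)$ by setting it equal to $0$ at $\infty$.

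To pass from pointwise negativity to uniformity in $V \in A_j$, I would use compactness. The set $A$ is $\bF_{S^3}$-compact by weak-$*$ compactness of Radon measures on $G_2(S^3)$ of bounded total mass, and $A_j$ is closed in $A$, hence compact. Because $div^G_S X_V \in Lip(G_2(S^3))$ in both cases, the map $W \mapsto \de_G W(X_V)$ is continuous on $A$ in the $\bF_{S^3}$ metric. Hence there is an $\bF_{S^3}$-neighborhood $U_V$ of $V$ on which $\de_G W(X_V) \le \tfrac{1}{2}\de_G V(X_V) < 0$. Extracting a finite subcover $\{U_{V_i}\}_{i=1}^N$ of the compact set $A_j$ and setting $c_j = \min_i \tfrac{1}{2}|\de_G V_i(X_{V_i})|$, $X_V := X_{V_i}$ for any $V \in U_{V_i}$, yields the uniform bound $\de_G V(X_V) \le -c_j$.

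The main obstacle is Case A: the radial vector field is not compactly supported, so the first variation (\ref{1st variation 4}) must genuinely be interpreted as an integral on $G_2(S^3)$ rather than on $G_2(\R^3)$ alone. The critical geometric input is that $div^G_S(\vec{x}/r^2)$ has a finite pointwise limit $-1/2$ at $\infty$, coming from the $-\lan X, \vec{x}\ran/2$ term in the Gaussian first variation (\ref{1st variation 1}). This is precisely what allows the radial flow to ``push mass away from $\infty$'' at a rate proportional to $\|V\|(\{\infty\})$, making the radial vector field $\vec{x}/r^2$ the right tool for the only obstruction to compactness in this non-compact setting.
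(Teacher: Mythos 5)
Your proposal is correct and follows essentially the same route as the paper: a radial vector field $\phi(r/\rho)\vec{x}/r^2$ to decrease the mass at $\{\infty\}$, a compactly supported field obtained from non-$F$-stationarity on $\R^3$ otherwise, and then compactness of $A_j$ together with the Lipschitz continuity of $div^G_S X_V$ (hence of $W\mapsto \de_G W(X_V)$ in the $\bF_{S^3}$-metric) to extract the uniform constant $c_j$. The only difference is cosmetic: you split at $\|V\|(\{\infty\})=0$ versus $\|V\|(\{\infty\})>0$ and let the finite-subcover step supply all uniformity, whereas the paper splits quantitatively at $\|V\|(\{\infty\})=\frac{1}{2\cdot 2^j}$ so that the radial-field case already gives a bound depending only on $j$, with compactness invoked in the same way for the remaining case.
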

\begin{proof}
We separate the discussion into two cases:
\begin{itemize}
\vspace{-5pt}
\setlength{\itemindent}{3em}
\addtolength{\itemsep}{-0.7em}
\item[{\bf Case 1:}] $\|V\|(\{\infty\})<\frac{1}{2\cdot 2^j}$;
\item[{\bf Case 2:}] $\|V\|(\{\infty\})\geq \frac{1}{2\cdot 2^j}$.
\end{itemize}
\noindent{\bf Part \Rom{1}:} If $\|V\|(\{\infty\})<\frac{1}{2\cdot 2^j}$, we claim that $\bF_{S^3}(V\lc G_2(S^3\backslash\{\infty\}), A_0)>\frac{1}{2\cdot 2^j}$.

Let us first check the claim. Given $f\in Lip\big(G_2(S^3)\big)$, $|f|\leq 1$, $Lip(f)\leq 1$, then for any $W\in A_0$, $\|W\|(\{\infty\})=0$, and we have
\begin{displaymath}
\begin{split}
\bigg| \int_{G_2(S^3)} & f(x, S)dV(x, S) -\int_{G_2(S^3)}f(x, S)dW(x, S) \bigg| \\
                                    & \leq \bigg| \int_{G_2(S^3\backslash\{\infty\})}f(x, S)dV-\int_{G_2(S^3\backslash\{\infty\})}f(x, S)dW \bigg| +\bigg| \int_{G_2(\{\infty\})}f(\infty, S)dV\bigg|\\
                                    & < \bigg| \int_{G_2(S^3\backslash\{\infty\})}f(x, S)dV-\int_{G_2(S^3\backslash\{\infty\})}f(x, S)dW \bigg| +\frac{1}{2\cdot 2^j}.
\end{split}
\end{displaymath}
By taking a supreme of the above inequality over all $f\in Lip\big(G_2(S^3)\big)$, $|f|\leq 1$, $Lip(f)\leq 1$,
$$\frac{1}{2^j}\leq \bF_{S^3}(V, W)< \bF_{S^3}(V\lc G_2(\R^3), W)+\frac{1}{2\cdot 2^j}.$$
The claim then follows from the above inequality.

By the claim, $V\lc G_2(\R^3)$ is not $F$-stationary, so there exists $X_V\in\X_c(\R^3)$, $\|X_V\|_{C^1(\R^3)}\leq 1$, such that
$$\de_GV(X_V)<0.$$
Moreover, $div^G_SX_V$ is a Lipschitz function on $G_2(S^3)=G_2(\R^3\cup\{\infty\})$ as $X_V$ has compact support in $\R^3$.

\vspace{5pt}
\noindent{\bf Part \Rom{2}:} If $\|V\|(\{\infty\})\geq \frac{1}{2\cdot 2^j}$, then we can find $\rho>0$ large enough, such that $\|V\|\big(B_{2\rho}\backslash B_{\rho}\big)\leq \frac{1}{100\cdot 2\cdot 2^j}$, where $B_{\rho}$ is the ball of $\R^3$ centered at $0$ with radius $\rho$ under the Euclidean metric $dx^2$. Consider $X_V(x)=\phi(\frac{r}{\rho})\frac{\vec{x}}{r^2}$ defined in (\ref{radial vector field}) for the chosen $\rho$. Then by (\ref{1st variation 4}) and (\ref{asymptotics for divergence of radial vf}),
\begin{displaymath}
\begin{split}
\de_G V(X_V) & = \int_{G_2(\R^3\cup\{\infty\})}\Big[\phi(\frac{r}{\rho})\big(\frac{2|\nabla^{\perp}r|^2}{r^2}-\frac{1}{2}\big)+\frac{r}{\rho}\phi^{\pr}(\frac{r}{\rho})\frac{|\nabla_S r|^2}{r^2}\Big]dV(x, S)\\
                   & \leq \int_{G_2(\R^3\backslash B_{2\rho} \cup\{\infty\})}\big(\frac{2|\nabla^{\perp} r|^2}{r^2}-\frac{1}{2}\big) dV(x, S)\\
                   & +\int_{G_2(B_{2\rho}\backslash B_{\rho})} \Big[\phi(\frac{r}{\rho})\big(\frac{2|\nabla^{\perp}r|^2}{r^2}-\frac{1}{2}\big)+2(1+\ep)\frac{|\nabla_S r|^2}{r^2}\Big] dV(x, S)\\
                   & \leq -\frac{1}{4}\|V\|(\{\infty\})+5\|V\|(B_{2\rho}\backslash B_{\rho})\\
                   & <-\frac{1}{8}\frac{1}{2\cdot 2^j}.
\end{split}
\end{displaymath}
By Lemma \ref{C1 bound for X}, we can choose $\rho\geq 3$ such that $\|X_V\|_{C^1(\R^3)}\leq 1$. By Lemma \ref{Lipschitz bound}, $div^G_S X_V$ is Lipschitz on $G_2(S^3)$.

The upper bound (\ref{1st variation upper bounded}) follows from the compactness of $A_j$ under the $\bF_{S^3}$-norm and the continuity of the map $W\rightarrow \de_G W(X_V)=\int_{G_2(S^3)}div^G_S X_V dW(x, S)$ (with respect to the $\bF_{S^3}$-metric) for fixed $X_V$.
\end{proof}


\section{Construction of tightening map}\label{Construction of tightening map}

Using the preliminary results in the above, we can construct the tightening map similar to that in \cite[\S 4]{CD}\cite[\S 4.3]{P81} and \cite[\S 15]{MN12}.


\subsection{A map from $A$ to the space of vector fields}

In this section, we will construct a map $H: A\rightarrow \X(\R^3)$, which is continuous with respect to the 
$C^1$ topology on $\X(\R^3)$.

Given $V\in A_j$, let $X_V$ be given in Lemma \ref{vf to deform mass}. In both cases in Lemma \ref{vf to deform mass}, $div^G_S X_V$ is Lipschitz on $G_2(S^3)$. So for fixed $X_V$, the map $W\rightarrow \de_G W(X_V)=\int_{G_2(S^3)}div^G_S X_V dW(x, S)$ is continuous with respect to the $\bF_{S^3}$-metric. Therefore for any $V\in A_j$, there exists $0<r_V<\frac{1}{2^{j+1}}$, such that for any $W\in U_{r_V}(V)$ (c.f. \S \ref{notation}), i.e. $\bF_{S^3}(W, V)<r_V$,
\begin{equation}\label{1st variation upper bound 2}
\de_G W(X_V)\leq \frac{1}{2}\de_G V(X_V)\leq -\frac{1}{2}c_j<0.
\end{equation}
Now $\big\{U_{r_V/2}(V): V\in A_j\big\}$ is an open covering of $A_j$. By the compactness of $A_j$, we can find finitely many balls $\big\{U_{r_{j, i}}(V_{j, i}): V_{j, i}\in A_j, 1\leq i\leq q_j\big\}$,  such that
\begin{itemize}
\vspace{-5pt}
\setlength{\itemindent}{1em}
\addtolength{\itemsep}{-0.7em}

\item[$(\rom{1})$] The balls $U_{r_{j, i}/2}(V_{j, i})$ with half radii cover $A_j$;

\item[$(\rom{2})$] The balls $U_{r_{j, i}}(V_{j, i})$ are disjoint from $A_k$ if $|j-k|\geq 2$.
\end{itemize}
In the following, we denote $U_{r_{j, i}}(V_{j, i})$, $U_{r_{j, i}/2}(V_{j, i})$, $r_{V_{j, i}}$ and $X_{V_{j, i}}$ by $U_{j, i}$, $\ti{U}_{j, i}$, $r_{j, i}$ and $X_{j, i}$ respectively.

Now we can construct a partition of unity $\{\varphi_{j, i}: j\in\N, 1\leq i\leq q_j\}$ sub-coordinate to the covering $\big\{U_{r_{j, i}/2}(V_{j, i}): V_{j, i}\in A_j, 1\leq i\leq q_j, j\in\N\big\}$ by
$$\varphi_{j, i}(V)=\frac{\psi_{j, i}(V)}{\sum\{\psi_{p, q}(V), p\in\N, 1\leq q\leq q_p\}},$$
where $\psi_{j, i}(V)=\bF_{S^3}(V, A\backslash \ti{U}_{j, i})$.

The map $H: A\rightarrow \X(\R^3)$ is defined by
\begin{equation}\label{construction of vector fields H}
H_V=\bF_{S^3}(V, A_0) \sum_{j\in\N, 1\leq i\leq q_j}\varphi_{j, i}(V)X_{j, i}.
\end{equation}
The following lemma is a straightforward consequence of the construction.
\begin{lemma}\label{properties of HV}
The map $H: V\rightarrow H_V$ satisfies the following properties:
\begin{itemize}
\vspace{-5pt}
\setlength{\itemindent}{1em}
\addtolength{\itemsep}{-0.7em}

\item[$(0)$] $H_V=0$ if $V\in A_0$;

\item[$(\rom{1})$] For any $V\in A$, there exists $R>0$ large enough, such that $H_V=c_V\frac{\vec{x}}{r^2}$ outside $B(0, R)$ for some $0\leq c_V\leq 1$, and the map $V\rightarrow c_V$ is continuous;

\item[$(\rom{2})$] The map $H$ is continuous with respect to the 
$C^1$ topology on $\X(\R^3)$;

\end{itemize}
\end{lemma}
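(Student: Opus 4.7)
The plan is to read off each property from two structural facts: \emph{local finiteness} of the partition of unity $\{\varphi_{j,i}\}$, and the two alternative shapes of the building blocks $X_{j,i}$ provided by Lemma \ref{vf to deform mass}. Local finiteness follows directly from covering property (\rom{2}): a ball $U_{j,i}$ is disjoint from $A_k$ whenever $|j-k| \geq 2$, so in an $\bF_{S^3}$-neighborhood of any fixed $V_0 \in A_{j_0}$ only balls indexed by $(j, i)$ with $j \in \{j_0 - 1, j_0, j_0 + 1\}$ can be active; since each $A_j$ is compact (Lemma \ref{A0 is compact} plus the fact that $A_j$ is a closed subset of the weakly compact mass-bounded set $A$), only finitely many indices contribute per level. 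In particular the (formally infinite) sum defining $H_V$ reduces to a finite sum on a neighborhood of any point.

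Property $(0)$ is immediate, since $\bF_{S^3}(V, A_0) = 0$ for $V \in A_0$ kills the global prefactor. For property $(\rom{1})$, fix $V \in A$ and split the finite active index set $J_V$ into a "compactly supported" part $J_V^c$ (Case \rom{1} of Lemma \ref{vf to deform mass}) and a "radial at infinity" part $J_V^r$ (Case \rom{2}, where $X_{j,i} \equiv \vec{x}/r^2$ outside some ball $B(0, R_{j,i})$). Choose $R$ exceeding every support radius in $J_V^c$ and every $R_{j,i}$ in $J_V^r$. On $\R^3 \setminus B(0, R)$ the compactly supported terms vanish and each radial term equals $\varphi_{j,i}(V)\,\vec{x}/r^2$, so
$$H_V(x) = c_V\,\frac{\vec{x}}{r^2}, \qquad c_V := \bF_{S^3}(V, A_0) \sum_{(j,i) \in J_V^r} \varphi_{j,i}(V) \geq 0.$$
Continuity of $V \mapsto c_V$ follows from continuity of $\bF_{S^3}(\cdot, A_0)$ and of each $\varphi_{j,i}$ in the $\bF_{S^3}$-metric. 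The upper bound $c_V \leq 1$ can be enforced, if needed, by replacing the prefactor $\bF_{S^3}(V, A_0)$ with $\min\{1, \bF_{S^3}(V, A_0)\}$; this modification preserves the vanishing on $A_0$ and does not weaken the tightening inequality $(\ref{1st variation upper bound 2})$.

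For property $(\rom{2})$, fix $V_0 \in A$ and choose an $\bF_{S^3}$-neighborhood $\mathcal{W}$ of $V_0$ on which only a fixed finite index set $J$ contributes. On $\mathcal{W}$ we have
$$H_V = \sum_{(j,i) \in J} \bigl[\bF_{S^3}(V, A_0)\,\varphi_{j,i}(V)\bigr]\, X_{j,i},$$
where each $X_{j,i}$ is a fixed element of $\X(\R^3)$ with $\|X_{j,i}\|_{C^1(\R^3)} \leq 1$ and each scalar coefficient is continuous in $V$ with respect to $\bF_{S^3}$. Since there are finitely many terms and each $X_{j,i}$ is a fixed $C^1$ vector field, the map $V \mapsto H_V$ is continuous into $(\X(\R^3), C^1)$ at $V_0$.

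The only nontrivial point is the organizational one of making local finiteness rigorous; once that is in hand, all three properties follow mechanically from the above bookkeeping. A minor care has to be taken in property $(\rom{1})$ to note that the threshold $R$ may depend on $V$, which is fine because the statement quantifies $R$ after $V$.
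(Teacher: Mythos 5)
Your verification follows exactly the route the paper intends: the paper offers no written proof (it calls the lemma ``a straightforward consequence of the construction''), and your bookkeeping via local finiteness of the covering $\{\ti{U}_{j,i}\}$ together with the two shapes of $X_{j,i}$ from Lemma \ref{vf to deform mass} is the right way to flesh it out. Your side remark about the prefactor is also well taken: as written, $\bF_{S^3}(V,A_0)$ need not be $\leq 1$ (e.g.\ on $A_1$ it is $\geq 1$, and it can be as large as the mass bound allows), so the bound $c_V\leq 1$ in $(\rom{1})$ really does require either capping the prefactor by $\min\{1,\bF_{S^3}(V,A_0)\}$ or weakening $c_V\le 1$ to a uniform bound; the cap is harmless because on $A_j$ the capped prefactor is still $\geq 2^{-j}$, which is all that the composite first-variation estimate derived from (\ref{1st variation upper bound 2}) in \S \ref{A map from A to the space of isotopies} uses.

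There is, however, one step that fails as stated: your continuity argument for $(\rom{2})$ (and the neighborhood local finiteness it rests on) only covers base points $V_0$ lying in some annulus $A_{j_0}$, i.e.\ $V_0\notin A_0$. At a point $V_0\in A_0$ there is in general \emph{no} $\bF_{S^3}$-neighborhood meeting only finitely many balls $\ti{U}_{j,i}$: every neighborhood of $V_0$ can contain points of $A_j$ for arbitrarily large $j$, hence meets half-balls from infinitely many levels (their radii $r_{j,i}<2^{-(j+1)}$ shrink, but their number grows with $j$), and moreover the partition-of-unity denominator $\sum\psi_{p,q}$ degenerates there, so ``finite sum of fixed fields with continuous coefficients'' is not available. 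Continuity of $H$ and of $c_V$ at $A_0$ needs a separate (one-line) argument, which your own ingredients supply: since $0\leq\varphi_{j,i}\leq 1$, $\sum_{j,i}\varphi_{j,i}=1$ and $\|X_{j,i}\|_{C^1(\R^3)}\leq 1$, one has $\|H_V\|_{C^1(\R^3)}\leq \bF_{S^3}(V,A_0)$ and $0\leq c_V\leq \bF_{S^3}(V,A_0)$, and both right-hand sides tend to $0=\|H_{V_0}\|_{C^1}=c_{V_0}$ as $V\to V_0\in A_0$. With that estimate added (it also makes property $(0)$ an honest limit statement rather than a $0\cdot(0/0)$ convention), your proof is complete and coincides with the intended one.
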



\subsection{A map from $A$ to the space of isotopies}\label{A map from A to the space of isotopies}

In this part, we will associate each $V\in A$ with an isotopy of $\R^3$ in a continuous manner. The isotopy will be generated by the vector field $H_V$. 

Given $V\in A$, we can construct a 1-parameter family of diffeomorphisms $\Phi_V: \R^+\times \R^3\rightarrow \R^3$ by
\begin{equation}\label{flow by vector fields}
\frac{\partial \Phi_V(t, x)}{\partial t}=H_V\big(\Phi_V(t, x)\big),\quad \Phi_V(0, x)=x.
\end{equation}
The solvability of the above ODE systems comes from Lemma \ref{properties of HV}(\rom{1}). In fact, the ODE systems is solvable on any compact subset of $\R^3$ by usual ODE theory. Near $\infty$, the solution is given by (\ref{flow of radial vf})\footnote{For $X(x)=c\frac{\vec{x}}{r^2}$, $f_t(x)=\sqrt{1+\frac{2ct}{r^2}}x$.}.

We will transform $V$ by $\Phi_V(t)$ to get a 1-parameter family of varifolds $V(t)=\big(\Phi_V(t)\big)_{\#} V$ (this is well-defined by similar argument as in Definition \ref{push forward of 2-varifolds on S3}), and we will show that the mass of $V(t)$ can be deformed down by a fixed amount depending only on $\bF_{S^3}(V, A_0)$.

In fact, given $V\in A_j$, let $r(V)$ be the smallest radii of the balls $\ti{U}_{k, i}$ which contain $V$. As there are only finitely many balls $\ti{U}_{k, i}$ which intersect with $A_j$, we know that $r(V)\geq r_j>0$, where $r_j$ depends only on $j$. Then
$$U_{r(V)}(V)\subset\cap_{V\in \ti{U}_{k, i}}U_{k, i}.$$
As the sub-index $k$ of these $\ti{U}_{k, i}$ can only be $j-1, j, j+1$ by our choice of $\ti{U}_{p, q}$'s, using (\ref{1st variation upper bound 2}) and (\ref{construction of vector fields H}), we have that for any $W\in U_{r(V)}(V)$,
$$\de_G W(H_V)\leq -\frac{1}{2^{j+1}}\min\{c_{j-1}, c_j, c_{j+1}\}.$$
Then there are two continuous functions $g:\R^+\rightarrow \R^+$ and $r: \R^+\rightarrow \R^+$, such that
\begin{equation}\label{1st variation upper bound-continuous version}
\de_G W(H_V)\leq -g\big(\bF_{S^3}(V, A_0)\big),\quad \textrm{ if }\quad \bF_{S^3}(W, V)\leq r\big(\bF_{S^3}(V, A_0)\big).
\end{equation}
Next, we will construct a continuous time function $T: [0, \infty) \rightarrow [0, \infty)$, such that
\begin{itemize}
\vspace{-5pt}
\setlength{\itemindent}{1em}
\addtolength{\itemsep}{-0.7em}
\item[$(\rom{1})$] $\lim_{t\rightarrow 0}T(t)=0$, and $T(t)>0$ if $t\neq 0$;

\item[$(\rom{2})$] For any $V\in A$, denote $\ga=\bF_{S^3}(V, A_0)$; then $V(t)=\big(\Phi_V(t)\big)_{\#}V\in U_{r(\ga)}(V)$ for all $0\leq t\leq T(\ga)$.
\end{itemize}
In fact, given $V\in A_j$, and $r=r\big(\bF_{S^3}(V, A_0)\big)>0$, by Lemma \ref{continuity of push forward by radial flow}, there is $T_V>0$, such that $V(t)\in U_{r}(V)$ for all $0\leq t\leq T_V$.
\begin{claim}
We can choose $T_V$ such that $T_V\geq T_j>0$, where $T_j$ depends only on $j$.
\end{claim}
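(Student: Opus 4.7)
The goal is to extract a uniform lower bound on the return time to $U_{r(\gamma)}(V)$ which only sees the index $j$ of the annulus $A_j$ to which $V$ belongs. My plan is to first establish a uniform $C^1$ control on the tightening vector field $H_V$ over $V\in A_j$, then convert it into a uniform Lipschitz-in-$t$ bound for the push-forward $V(t)=(\Phi_V(t))_\#V$ in the $\bF_{S^3}$ metric, and finally combine this with a positive lower bound for $r(\gamma)$ on the compact interval $\gamma\in[\tfrac{1}{2^j},\tfrac{1}{2^{j-1}}]$.

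First, I would bound $\|H_V\|_{C^1(\R^3)}$ uniformly on $A$. By Lemma \ref{vf to deform mass}, each $X_{j,i}$ satisfies $\|X_{j,i}\|_{C^1(\R^3)}\le 1$, and the $\varphi_{j,i}(V)$ form a partition of unity in $V$ (hence numerical weights, independent of $x$), so
\[
\|H_V\|_{C^1(\R^3)}\ \le\ \bF_{S^3}(V,A_0)\sum_{j,i}\varphi_{j,i}(V)\|X_{j,i}\|_{C^1(\R^3)}\ \le\ 1.
\]
Moreover, by Lemma \ref{properties of HV}(\rom{1}), $H_V=c_V\vec{x}/r^2$ outside a compact set with $0\le c_V\le 1$, so the flow $\Phi_V(t)$ is well-defined on all of $\R^3$ and its asymptotic form near infinity is given explicitly by (\ref{flow of radial vf}).

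Second, I would show that for fixed $V\in A_j$, $t\mapsto V(t)$ is Lipschitz in the $\bF_{S^3}$-metric with a constant $K$ that depends only on $L=2W(\La)$. For any test function $f$ with $|f|\le 1$, $\mathrm{Lip}(f)\le 1$ on $G_2(S^3)$, one has
\[
\int f\,dV(t)-\int f\,dV\ =\ \int\!\bigl[f(\tilde F_t(x,S))J^G\Phi_V(t)(x,S)-f(x,S)\bigr]\,dV(x,S).
\]
On compact subsets of $\R^3$, the uniform bound $\|H_V\|_{C^1}\le 1$ yields $\|\tilde F_t-\mathrm{id}\|_{C^0}\le C t$ and $|J^G\Phi_V(t)-1|\le C t$ by standard ODE theory; near $\infty$, the explicit formulas (\ref{Jacobian of radial flow 1}) and (\ref{differential of ft}) (applied with parameter $c_V t$ in place of $t$) show that the integrand extends continuously to $G_2(S^3)$ and is bounded in absolute value by $C' t$, all uniformly in $V$. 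Integrating against $V$ and using $\|V\|(S^3)\le L$, one obtains
\[
\bF_{S^3}\bigl(V(t),V\bigr)\ \le\ K\,t\qquad\text{for }0\le t\le t_0,
\]
with $K,t_0$ depending only on $L$.

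Third, the function $r$ appearing in (\ref{1st variation upper bound-continuous version}) is continuous and strictly positive on $\R^+$, hence bounded below by some $r_j>0$ on the compact interval $[\tfrac{1}{2^j},\tfrac{1}{2^{j-1}}]$. Setting $T_j:=\min\{t_0,\,r_j/K\}>0$, I would conclude that for every $V\in A_j$ and every $0\le t\le T_j$,
\[
\bF_{S^3}\bigl(V(t),V\bigr)\ \le\ K T_j\ \le\ r_j\ \le\ r(\bF_{S^3}(V,A_0)),
\]
so $V(t)\in U_{r(\gamma)}(V)$, proving the claim.

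The main technical point is the second step, specifically making the Lipschitz estimate uniform near $\infty$ where $H_V$ is the radial field (rescaled by $c_V$): one must verify that the integrand $f\circ\tilde F_t\cdot J^G\Phi_V(t)-f$ is $O(t)$ uniformly in $G_2(S^3)$, not merely pointwise. This is essentially the quantitative version of Lemma \ref{continuity of push forward by radial flow}, and follows because the limits of $J^G\Phi_V(t)$ and of $d(\Phi_V(t))_x S$ at $\infty$ depend smoothly on $t$ (equal to $e^{-c_V t/2}$ and $S$ respectively when $t=0$), with derivatives in $t$ bounded independently of $V$.
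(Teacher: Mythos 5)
Your argument is correct, but it takes a genuinely different route from the paper. The paper proves the claim by contradiction: it uses the compactness of $A_j$ in the $\bF_{S^3}$-metric, extracts a convergent subsequence $V_\be\to V_\infty$ from a hypothetical sequence with $T_\be\to 0$, and invokes the joint continuity of $(V,t)\mapsto \big(\Phi_V(t)\big)_{\#}V$ (via Lemma \ref{properties of HV} and the argument of Lemma \ref{continuity of push forward by radial flow}) to contradict $T_\be\to 0$. You instead prove a direct quantitative estimate: a uniform-in-$V$ Lipschitz bound $\bF_{S^3}\big(V(t),V\big)\le Kt$, obtained by bounding the integrand $f\circ\ti{F}_t\cdot J^G\Phi_V(t)-f$ by $O(t)$ on all of $G_2(S^3)$ (compact region via the uniform $C^1$ control on $H_V$, the end via the explicit radial formulas, and the point at $\infty$ by the continuous extension), and then combine it with a positive lower bound $r_j$ for $r(\cdot)$ on the range of $\ga$ over $A_j$. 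This buys an explicit $T_j=\min\{t_0,r_j/K\}$ and avoids both the compactness of $A_j$ and any continuity in $V$ of the flow, at the price of having to verify the $O(t)$ bound uniformly near $\infty$, which you correctly identify as the key technical point (it is the quantitative form of Lemma \ref{continuity of push forward by radial flow}). Two small imprecisions, neither affecting the conclusion: the constants $K,t_0$ are not determined by $L$ alone but also by $j$, since for $V\in A_j$ the field $H_V$ is a combination of the finitely many $X_{k,i}$ with $k\in\{j-1,j,j+1\}$, whose supports and cutoff radii enter the estimate of the Gaussian factor $e^{-(|\Phi_V(t)(x)|^2-|x|^2)/4}$ on the compact part (this is harmless because $T_j$ is allowed to depend on $j$); and $\|H_V\|_{C^1}\le 1$ requires $\bF_{S^3}(V,A_0)\le 1$, which can fail on $A_1$, though the uniform bound $\|H_V\|_{C^1}\le \bF_{S^3}(V,A_0)\le L$ is all your argument actually needs.
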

\begin{proof}
This follows from the compactness of $A_j$ as follows. Assume that there is a sequence $\{V_\al: \al\in\N\}\subset A_j$, such that the maximal possible time $T_{\al}$ for $\{V_{\al}(t)=\big(\Phi_{V_{\al}}(t)\big)_{\#}V_{\al}, 0\leq t\leq T_{\al}\}$ to stay in $U_{r(\ga_{\al})}(V_{\al})$\footnote{$\ga_{\al}=\bF_{S^3}(V_{\al}, A_0)$.} converge to $0$ as $\al\rightarrow\infty$. Up to a subsequence $\{V_{\be}\}$, $\lim_{\be\rightarrow \infty}V_{\be}=V_{\infty}$ for some $V_{\infty}\in A_j$. Denote $V_{\infty}(t)=\big(\Phi_{V_{\infty}}(t)\big)_{\#}V_{\infty}$. For any fixed small $t>0$, by Lemma \ref{properties of HV}, $\Phi_{V_{\be}}(t)$ will converge to $\Phi_{V_{\infty}}(t)$ locally uniformly on $\R^3$; and near $\infty$,
$$\Phi_{V_{\be}}(t)(x)=\sqrt{1+\frac{2 c_{\be}t}{r^2}}x, \quad \Phi_{V_{\infty}}(t)(x)=\sqrt{1+\frac{2 c_{\infty}t}{r^2}}x,$$
for some $c_{\be}, c_{\infty}\in [0, 1]$, with $\lim_{\be\rightarrow \infty}c_{\be}=c_{\infty}$. Using argument similar to that in Lemma \ref{continuity of push forward by radial flow}, given $\lim_{\be\rightarrow\infty}t_{\be}=t_{\infty}$, we can show that $\lim_{\be\rightarrow\infty} V_{\be}(t_{\be})=V_{\infty}(t_{\infty})$. As $V_{\infty}\in A_j$, there exists $T_{\infty}>0$, such that $V_{\infty}$ stays in $U_{r(\ga_{\infty})/2}(V_{\infty})$\footnote{$\ga_{\infty}=\bF_{S^3}(V_{\infty}, A_0)$.} for all $0\leq t\leq T_{\infty}$. Therefore, for $\be$ large enough, $V_{\be}(t)$ will stay in $U_{r(\ga_{\be})}(V_{\be})$ for all $0\leq t\leq T_{\infty}$. It is then a contradiction to the fact that $T_{\be}\rightarrow 0$.
\end{proof}
Therefore, we can choose $T$ satisfying the requirement which is a continuous function depending only on $\bF_{S^3}(V, A_0)$.\\

In summary, given $V\in A\backslash A_0$, with $\ga=\bF_{S^3}(V, A_0)>0$, we can transform $V$ to $V^{\pr}=\big(\Phi_{V}[T(\ga)]\big)_{\#}V$, such that
\begin{itemize}
\vspace{-5pt}
\setlength{\itemindent}{1em}
\addtolength{\itemsep}{-0.7em}
\item[$(\rom{1})$] The map $V\rightarrow V^{\pr}$ is continuous under the $\bF_{S^3}$-metric;

\item[$(\rom{2})$] Using (\ref{1st variation upper bound-continuous version}),
\begin{equation}
\|V^{\pr}\|(S^3)-\|V\|(S^3) \leq \int_{0}^{T(\ga)}[\de_G V(t)] (H_V) dt\leq -T(\ga)g(\ga)<0.
\end{equation}
\end{itemize}

Denote
$$\Psi_V(t, \cdot)=\Phi\big([T(\ga)]t, \cdot\big),\quad \textrm{for } t\in[0, 1];$$
and $L: \R^+\rightarrow \R^+$, with $L(\ga)=T(\ga)g(\ga)$, then $L(0)=0$ and $L(\ga)>0$ if $\ga>0$; and
\begin{equation}\label{decrease mass by isotopy}
\|V^{\pr}\|(S^3)\leq \|V\|(S^3)-L(\ga).
\end{equation}


\subsection{Deforming continuous families by the tightening map}\label{Deforming smooth families by the tightening map}

Given a continuous family of surfaces $\{\Si_{\nu}\}_{\nu\in I^n}$ as in Definition \ref{continuous family}, viewing each slice $\Si_{\nu}$ as an integer multiplicity 2-varifold in $\V_2(\R^3)$, we have
\begin{lemma}
$\{\Si_{\nu}^G\}$ is a continuous family in $\V_2(S^3)$, where $\Si_{\nu}^G=\frac{1}{4\pi}e^{-\frac{|x|^2}{4}}\Si_{\nu}$.
\end{lemma}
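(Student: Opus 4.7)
\noindent\textit{Proof plan.} The content of the lemma is that the map $\nu \mapsto \Si_\nu^G$ is continuous from $I^n$ into $(\V_2(S^3),\bF_{S^3})$. Since each $\Si_\nu$ is a (possibly non-compact, in the case of affine planes on $\partial I^n$) smooth surface with finite Gaussian area, and since the varifolds $\Si_\nu^G$ live on the compact space $S^3$, it will be enough to check that for every sequence $\nu_k \to \nu_0$ in $I^n$, the varifolds $\Si_{\nu_k}^G$ converge to $\Si_{\nu_0}^G$ in the $\bF_{S^3}$-metric. On the compact Polish space $G_2(S^3)$, the $\bF_{S^3}$-metric metrizes weak-$\ast$ convergence of Radon measures together with convergence of total mass, so this is what I will verify.

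\textbf{Step 1: Weak-$\ast$ convergence on compact subsets of $\R^3$.} By the first bullet of Definition \ref{continuous family}, $\{\Si_\nu\}$ is a smooth family in the locally smooth topology, so for every compact $K \Subset \R^3$ the surfaces $\Si_{\nu_k}\cap K$ converge smoothly (as embedded submanifolds with boundary) to $\Si_{\nu_0}\cap K$. Smooth convergence on $K$ gives varifold convergence of $\Si_{\nu_k}\lc G_2(K)$ to $\Si_{\nu_0}\lc G_2(K)$, and multiplication by the smooth positive weight $\tfrac{1}{4\pi}e^{-|x|^2/4}$ preserves this, so $\Si_{\nu_k}^G\lc G_2(K) \to \Si_{\nu_0}^G\lc G_2(K)$ weakly-$\ast$. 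Consequently, for every $\phi \in Lip(G_2(S^3))$ supported in $G_2(K)$,
\begin{equation*}
\int \phi\, d\Si_{\nu_k}^G \longrightarrow \int \phi\, d\Si_{\nu_0}^G.
\end{equation*}

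\textbf{Step 2: No mass escapes to $\{\infty\}$.} The fourth bullet of Definition \ref{continuous family} gives $F(\Si_{\nu_k}) \to F(\Si_{\nu_0})$, i.e.\ $\|\Si_{\nu_k}^G\|(\R^3) \to \|\Si_{\nu_0}^G\|(\R^3)$. Choose an exhaustion $K_j \nearrow \R^3$ by compact sets with $\Si_{\nu_0}$ transverse to $\partial K_j$; Step 1 applied to a cutoff supported in $K_j$ yields $\|\Si_{\nu_k}^G\|(K_j) \to \|\Si_{\nu_0}^G\|(K_j)$. Subtracting from the total mass convergence and letting $j\to\infty$ forces the tail mass $\|\Si_{\nu_k}^G\|(\R^3\setminus K_j)$ to vanish uniformly in $k$ as $j\to\infty$. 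Because $\Si_{\nu_0}^G$ puts zero mass on $\{\infty\}$ (as $\Si_{\nu_0}\subset\R^3$), this rules out any accumulation of mass at $\{\infty\}\in S^3$.

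\textbf{Step 3: Conclusion.} Steps 1 and 2 together give weak-$\ast$ convergence $\Si_{\nu_k}^G \to \Si_{\nu_0}^G$ on the compact space $G_2(S^3)$ with matching total mass, hence $\bF_{S^3}(\Si_{\nu_k}^G,\Si_{\nu_0}^G)\to 0$. The three remaining bullets in Definition \ref{continuous family} (genus control in the interior, uniform bound on $F$, and boundary slices being affine planes or empty) are inherited tautologically from the corresponding properties of $\{\Si_\nu\}$, since they are preserved under pointwise multiplication by the smooth positive weight.

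The only subtlety — and the single step that must be handled with care — is that on $\partial I^n$ the surface $\Si_\nu$ may be a non-compact affine plane. Even there, the locally smooth topology ensures that the defining data (unit normal and offset) vary smoothly in $\nu$, so Step 1 still applies on each compact $K$, and Step 2 (driven by the finite Gaussian area and the continuity of $F$) handles the decay at infinity exactly as in the compact case.
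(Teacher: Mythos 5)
Your proof is correct and is essentially the paper's own argument in expanded form: the paper's one-line proof cites exactly the two ingredients you develop, namely locally smooth continuity of $\Si_{\nu}$ (your Step 1, giving varifold convergence on compact subsets of $\R^3$) and continuity of $F(\Si_{\nu})=\|\Si^G_{\nu}\|(S^3)$ (your Step 2, ruling out mass escaping to $\{\infty\}$), combined via the $\bF_{S^3}$-metric as in your Step 3. No discrepancy with the paper's approach.
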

\begin{proof}
This is an easy corollary of the locally smoothly continuity of $\Si_{\nu}$ and the continuity of $\|\Si^G_{\nu}\|(S^3)=F(\Si_{\nu})$.
\end{proof}
Using \S \ref{A map from A to the space of isotopies}, we can associate with each slice $\Si^G_{\nu}$ a vector field $H_{\nu}=H_{\Si^{G}_{\nu}}$ and a time $T_{\nu}=T_{\Si^{G}_{\nu}}=T\big(\bF_{S^3}(\Si^G_{\nu}, A_0)\big)$, and an isotopy $\Psi_{\nu}=\Psi_{\Si^G_{\nu}}$. By Lemma \ref{properties of HV}(0), 
$$H_{\nu}=0, \quad \Psi_{\nu}=id,\quad \textrm{when }\nu\in\partial I^n.$$
Define a new family of surfaces $\{\Ga_{\nu}\}$ by
$$\Ga_{\nu}=\Psi_{\Si^G_{\nu}}(1, \Si_{\nu}),\quad\textrm{for all } \nu\in I^n.$$
It is easily seen that
$$\Ga^G_{\nu}=\big(\Psi_{\Si^G_{\nu}}\big)_{\#}\Si^G_{\nu} \quad\textrm{viewed as varifolds as (\ref{push-forward in Gaussian metric})}.$$
Thus by (\ref{decrease mass by isotopy}),
\begin{displaymath}
\begin{split}
F(\Ga_{\nu})=\|\Ga^G_{\nu}\|(S^3) & \leq \|\Si^G_{\nu}\|(S^3)-L\big(\bF_{S^3}(\Si^G_{\nu}, A_0)\big)\\
                                                       & = F(\Si_{\nu})-L\big(\bF_{S^3}(\Si^G_{\nu}, A_0)\big).
\end{split}
\end{displaymath}

However, $\{\Ga_{\nu}\}$ does not necessarily belong to the saturated set $\La$ of $\{\Si_{\nu}\}$, as the family of diffeomorphisms $\psi_{\nu}(x)=\Psi_{\Si^G_{\nu}}(1, x)$ may not depend smoothly on $x$ and $\nu$. Note that $\Psi_{\Si^G_{\nu}}$ is generated by the n-parameter family of vector fields $h_{\nu}=T_{\nu}H_{\nu}=T_{\Si^{G}_{\nu}}H_{\Si^{G}_{\nu}}$, and $h$ is a continuous map
$$h: I^n\rightarrow C^1(\R^3, \R^3) \textrm{ with the $C^1$ topology};$$
$$\textrm{and } h=0 \textrm{ when restricted to } \partial I^n.$$
Using a mollification on the domain $I^n\times\R^3$, we can approximate $h$ by $\ti{h}\in C^{\infty}(I^n\times\R^3, \R^3)$, such that $\sup_{\nu}\|h_{\nu}-\ti{h}_{\nu}\|_{C^1}$ is as small as we want. Moreover, we can make sure that
$$\{\nu\in I^n: \ti{h}_{\nu}\neq 0\} \textrm{ is a compact subset of }\mathring{I}^n.$$
Consider the smooth n-parameter family of isotopies $\ti{\Psi}_{\nu}$ generated by the vector fields $\ti{h}_{\nu}$\footnote{The existence of such $\ti{\Psi}_{\nu}$ follows from standard ODE theory as $\|\ti{h}_{\nu}\|_{C^1}$ is uniformly bounded.}, and the n-parameter family of surfaces $\{\ti{\Ga}_{\nu}\}$ given by $\ti{\Ga}_{\nu}=\ti{\Psi}_{\nu}(1, \Si_{\nu})$.
\begin{lemma}\label{vector fields with C1 bound preserve continuous family}
$\{\ti{\Ga}_{\nu}\}$ is a continuous family as in Definition \ref{continuous family}, and hence lies in the saturated set of $\{\Si_{\nu}\}$.
\end{lemma}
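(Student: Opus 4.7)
The plan is to verify the five bullet points of Definition \ref{continuous family} for $\{\ti{\Ga}_\nu\} = \{\ti{\Psi}_\nu(1,\Si_\nu)\}$, using three ingredients from the construction: (a) $\ti h\in C^\infty(I^n\times\R^3,\R^3)$ with $\sup_{\nu\in I^n}\|\ti h_\nu\|_{C^1}\le C$, (b) $\{\nu:\ti h_\nu\neq 0\}\subset\subset\mathring I^n$, and (c) the fact from Lemma \ref{properties of HV}(\rom{1}) that the unmollified $h_\nu=T_\nu H_\nu$ is \emph{uniformly bounded} on $I^n\times\R^3$, since outside a large ball $H_\nu=c_\nu\vec x/r^2$ decays like $1/|x|$. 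Property (c) persists under mollification, so $\|\ti h\|_{L^\infty(I^n\times\R^3)}\le M$ for some $M<\infty$.

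Step 1 (smoothness and boundary). Because $\ti h$ is smooth in $(\nu,x)$ with uniform $C^1$ bound, standard ODE theory gives a smooth flow $\ti\Psi_\nu:[0,1]\times\R^3\to\R^3$ that is a diffeomorphism for each $\nu$, with smooth joint dependence on $(\nu,t,x)$. Composing with the locally smooth family $\{\Si_\nu\}$, the family $\{\ti\Ga_\nu\}$ is locally smoothly continuous in $\nu$. By (b), $\ti\Psi_\nu(1,\cdot)=\mathrm{id}$ in a neighborhood of $\partial I^n$, so $\ti\Ga_\nu=\Si_\nu$ on $\partial I^n$ and the boundary condition of consisting only of affine planes or empty sets is preserved. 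Diffeomorphisms preserve genus, so each interior slice is a genus $g$ surface.

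Step 2 (bounded and continuous Gaussian area). By Gronwall and the uniform bound $|\ti h_\nu|\le M$, one has $|\ti\Psi_\nu(1,x)-x|\le M$ uniformly in $\nu\in I^n$, $x\in\R^3$, and $\|D_x\ti\Psi_\nu(1,\cdot)\|_{L^\infty}\le e^C$. Changing variables by the diffeomorphism $\ti\Psi_\nu(1,\cdot)$ and using $|\ti\Psi_\nu(1,x)|^2\ge (|x|-M)^2$ gives
\begin{equation*}
F(\ti\Ga_\nu)=\frac{1}{4\pi}\int_{\Si_\nu}e^{-|\ti\Psi_\nu(1,x)|^2/4}J\ti\Psi_\nu(1,x)\,dx \le K\cdot F(\Si_\nu)
\end{equation*}
for a constant $K=K(C,M)$, which yields $\sup_\nu F(\ti\Ga_\nu)<\infty$. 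For continuity, fix $\nu_0$ and let $\nu_k\to\nu_0$. The locally smooth convergence $\Si_{\nu_k}\to\Si_{\nu_0}$ together with the joint continuity of $(\nu,x)\mapsto\ti\Psi_\nu(1,x)$ implies pointwise convergence of the integrands in the displayed formula; the dominating function $K\,e^{-(|x|-M)^2/4}\cdot\mathbf 1_{\Si_{\nu_k}}$ has uniformly bounded total mass by the sup bound, so a standard dominated convergence argument on compact exhaustions of $\R^3$ (together with the already-known continuity of $F(\Si_\nu)$) gives $F(\ti\Ga_{\nu_k})\to F(\ti\Ga_{\nu_0})$.

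The main obstacle is Step 2: since $\ti h_\nu$ is produced by mollification over $I^n\times\R^3$ and the unmollified $h_\nu$ does \emph{not} have spatially compact support, one cannot directly invoke standard preservation results for compactly supported isotopies. The fix is the specific asymptotic structure $H_\nu=c_\nu\vec x/r^2$ near infinity from Lemma \ref{properties of HV}, which makes $h$ and hence $\ti h$ globally $L^\infty$-bounded; this is exactly what is needed to keep $\ti\Psi_\nu(1,\cdot)$ a uniformly bounded Euclidean perturbation of the identity and thereby control the Gaussian weight. Once Step 2 is in place, the five bullet points of Definition \ref{continuous family} are satisfied, so $\{\ti\Ga_\nu\}$ is a continuous family, and by construction (its isotopy is generated by smooth vector fields supported in $\mathring I^n$) it lies in the saturation $\Lambda$ of $\{\Si_\nu\}$.
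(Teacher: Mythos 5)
Your overall strategy is the same as the paper's: the only nontrivial points are the boundedness and continuity of $\nu\mapsto F(\ti{\Ga}_\nu)$, and like the paper you reduce this to a pointwise bound on the weight ratio $I_\nu(x)=e^{-(|\ti{\Psi}_\nu(1,x)|^2-|x|^2)/4}J\ti{\Psi}_\nu$ plus a large-ball/small-tail splitting. However, your justification of the ratio bound in Step 2 has a genuine gap. From $|\ti h_\nu|\le M$ you only get $|\ti{\Psi}_\nu(1,x)-x|\le M$, hence $|\ti{\Psi}_\nu(1,x)|^2\ge |x|^2-2M|x|+M^2$, which gives
\[
e^{-|\ti{\Psi}_\nu(1,x)|^2/4}\;\le\; e^{-|x|^2/4}\,e^{\frac{M|x|}{2}-\frac{M^2}{4}},
\]
and the factor $e^{M|x|/2}$ is unbounded in $|x|$; no constant $K=K(C,M)$ with $F(\ti{\Ga}_\nu)\le K\,F(\Si_\nu)$ follows. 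A bounded Euclidean displacement is compatible with an arbitrarily large increase of the Gaussian weight far from the origin. For the same reason your dominating function $K\,e^{-(|x|-M)^2/4}\mathbf{1}_{\Si_{\nu_k}}$ is not controlled by $\sup_\nu F(\Si_\nu)$: the hypothesis bounds $\int_{\Si_\nu}e^{-|x|^2/4}d\mH^2$, and the two weights differ by that same unbounded factor (a surface with area growth like $e^{|x|^2/4}/|x|^2$ has finite Gaussian area but infinite integral against $e^{-(|x|-M)^2/4}$), so the dominated-convergence step does not close.

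What actually makes $I_\nu$ bounded is not the $L^\infty$ bound on $\ti h_\nu$ but control of the radial component weighted by $|x|$: along the flow, $\frac{d}{dt}|\gamma(t)|^2=2\lan \gamma,\ti h_\nu(\gamma)\ran$, and for the tightening fields, which equal $c_\nu T_\nu\,\vec{x}/r^2$ outside a large ball by Lemma \ref{properties of HV}, one has $\lan x,h_\nu(x)\ran$ uniformly bounded, so $|\ti{\Psi}_\nu(1,x)|^2-|x|^2$ is bounded below by a constant; for the exact radial flow $|f_t(x)|^2=|x|^2+2t$ and the Gaussian Jacobian is exactly $(\cdots)^{1/2}e^{-t/2}$ as in (\ref{Jacobian of radial flow 1}). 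You must also check that the mollification preserves this structure: an $L^\infty$-small error $\ti h_\nu-h_\nu$ by itself reintroduces a factor $e^{\ep|x|/2}$, so one should either arrange that $\ti h_\nu$ remains an exact multiple of $\vec{x}/r^2$ outside a fixed ball (possible, since $h_\nu$ is already smooth in $x$ there and only the $\nu$-dependence needs smoothing) or measure the approximation in a norm weighted against the Gaussian. Once the bound $\|I_\nu\|_\infty\le C$ is in hand, the paper's continuity argument is the correct completion and is close to what you intend: pick $R$ with $\int_{\Si_{\nu_0}\cap B_R^c}\frac{1}{4\pi}e^{-|x|^2/4}d\mH^2<\ep/(4C)$, use the continuity of $F(\Si_\nu)$ and the locally smooth convergence to get the same tail estimate for nearby $\nu$, and compare the integrals over $B_R$ by local uniform convergence of $\ti{\Psi}_\nu$ and $\Si_\nu$. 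Your Step 1 (smoothness, boundary behaviour, genus) is fine.
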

\begin{proof}
The only thing we need to prove is the continuity of $\nu\rightarrow F(\ti{\Ga}_{\nu})$. In fact,
\begin{displaymath}
\begin{split}
F(\ti{\Ga}_{\nu}) & =\int_{\Si_{\nu}}\frac{1}{4\pi}e^{-\frac{|\ti{\Psi}_{\nu}(x)|^2}{4}}J\ti{\Psi}_{\nu}(x, T_x\Si_{\nu})d\mH^2(x)\\
                          & =\int_{\Si_{\nu}}\underbrace{e^{-\big(\frac{|\ti{\Psi}_{\nu}(x)|^2-|x|^2}{4}\big)}J\ti{\Psi}_{\nu}(x, T_x\Si_{\nu})}_{I_{\nu}(x)} \frac{1}{4\pi}e^{-\frac{|x|^2}{4}}d\mH^2(x).
\end{split}
\end{displaymath}
As $\ti{h}_{\nu}$ has uniform $C^1$ bound, $I_{\nu}$ is bounded, i.e. $\|I_{\nu}\|_{\infty}\leq C$. 
Fix ${\nu}_0\in I^n$, and for any $\ep>0$, we can choose a large ball $B_{R_{\ep}}(0)$, such that $\int_{\Si_{\nu_0}\cap B^c_{R_{\ep}}(0)}\frac{1}{4\pi}e^{-\frac{|x|^2}{4}}d\mH^2(x)<\frac{\ep}{4C}$. By the continuity of $F(\Si_{\nu})$ and the continuity of $\Si_{\nu}$ under the locally smoothly topology, for $|\nu-\nu_0|$ small enough, $\int_{\Si_\nu\cap B^c_{R_{\ep}}(0)}\frac{1}{4\pi}e^{-\frac{|x|^2}{4}}d\mH^2(x)<\frac{\ep}{2C}$. Again by the continuity of $\ti{h}_\nu$ and $\Si_\nu$ under the locally smoothly topology, $\big{|}\int_{\Si_\nu\cap B_{R_{\ep}}(0)}I_\nu(x)\frac{1}{4\pi}e^{-\frac{|x|^2}{4}}d\mH^2(x)-\int_{\Si_{\nu_0}\cap B_{R_{\ep}}(0)}I_{\nu_0}(x)\frac{1}{4\pi}e^{-\frac{|x|^2}{4}}d\mH^2(x)\big{|}<\frac{\ep}{4}$ for $|\nu-\nu_0|$ small enough. Hence $|F(\ti{\Ga}_\nu)-F(\ti{\Ga}_{\nu_0})|<\ep$ for $|\nu-\nu_0|$ small enough.
\end{proof}
By similar argument as above, for any given $\ep>0$, we can choose $\sup_\nu\|h_\nu-\ti{h}_\nu\|_{C^1}$ small enough, such that $\sup_\nu |F(\Ga_\nu)-F(\ti{\Ga}_\nu)|\leq \ep$, so that
$$F(\ti{\Ga}_\nu)\leq F(\Si_\nu)-L\big(\bF_{S^3}(\Si^G_\nu, A_0)\big)+\ep.$$
Also, we can make sure that
$$\bF_{S^3}(\Ga_\nu^G, \ti{\Ga}_\nu^G)\leq \ep,$$
by possibly shrinking $\sup_\nu\|h_\nu-\ti{h}_\nu\|_{C^1}$.\\

Now we are ready to carry out the tightening process.

\begin{proof}
(of Proposition \ref{pull tight}) Choose a minimizing sequence $\{\{\Si_\nu\}^k\}\subset\La$, with $\F(\{\Si_\nu\}^k)\leq W(\La)+\frac{1}{k}$. For each $\{\Si_\nu^k\}$, there is a family of isotopies $\{\Psi^k_\nu\}$ and a family of surfaces $\{\Ga^k_\nu\}$ given by $\Ga^k_\nu=\Psi^k_\nu(1, \Si^k_\nu)$. Moreover, by the discussion above we can find a smooth n-parameter family of isotopies $\{\ti{\Psi}_\nu^k\}$, such that $\{\ti{\Ga}_\nu^k\}$ given by $\ti{\Ga}_\nu^k=\ti{\Psi}_\nu^k(1, \Si^k_\nu)$ lies in $\La$, and
\begin{equation}\label{eq 1 in the proof of pull tight}
F(\ti{\Ga}^k_\nu)\leq F(\Si^k_\nu)-L\big(\bF_{S^3}[(\Si^k_\nu)^G, A_0]\big)+\frac{1}{k};
\end{equation}
\begin{equation}\label{eq 2 in the proof of pull tight}
\bF_{S^3}\big[(\Ga^k_\nu)^G, (\ti{\Ga}^k_\nu)^G\big]\leq \frac{1}{k}.
\end{equation}
Therefore $\{\{\ti{\Ga}_\nu\}^k\}$ is also a minimizing sequence in $\La$. If $\{\{\ti{\Ga}_\nu\}^k\}$ does not satisfy the requirement of Proposition \ref{pull tight}, then there exists a min-max sequence $\{\ti{\Ga}^k_{\nu_k}\}$, with $F(\ti{\Ga}^k_{\nu_k})\rightarrow W(\La)$, but $\bF_{S^3}\big((\ti{\Ga}^k_{\nu_k})^G, \bar{A}_0\big)\geq c>0$. Since $W(\La)>\max_{\nu\in\partial I^n}F(\Si_{\nu})$, we have that $F(\ti{\Ga}^k_{\nu_k})>\max_{\nu\in\partial I^n}F(\Si_{\nu})=\max_{V\in B}\|V\|(S^3)$ for $k$ large enough. Therefore we know that $\bF_{S^3}\big((\ti{\Ga}^k_{\nu_k})^G, A_0\big)\geq c^{\pr}>0$.

Let $\nu=\nu_k$ in (\ref{eq 1 in the proof of pull tight}) and take a limit as $k\rightarrow\infty$; we get
$$W(\La)\leq W(\La)-\lim_{k\rightarrow\infty} L\big(\bF_{S^3}[(\Si^k_{\nu_k})^G, A_0]\big).$$
This means that $\lim_{k\rightarrow\infty}\bF_{S^3}\big((\Si^k_{\nu_k})^G, A_0\big)=0$. In \S \ref{A map from A to the space of isotopies}, we know that 
$$\bF_{S^3}\big((\Si^k_{\nu_k})^G, (\Ga^k_{\nu_k})^G\big)\leq r\big(\bF_{S^3}[(\Si^k_{\nu_k})^G, A_0]\big)\rightarrow 0.$$
So $\lim_{k\rightarrow\infty}\bF_{S^3}[(\Ga^k_{\nu_k})^G, A_0]=0$, and this is a contradiction to (\ref{eq 2 in the proof of pull tight}) and the fact $\bF_{S^3}\big((\ti{\Ga}^k_{\nu_k})^G, A_0\big)\geq c^{\pr}>0$. So $\{\{\ti{\Ga}_{\nu}\}^k_{\nu\in I^n}\}$ is a minimizing sequence of $\La$ for which every min-max sequence converge to a $F$-stationary varifold under $\bF_{S^3}$ with no mass supported at $\{\infty\}$. 
\end{proof}




\section{Appendix}


\subsection{Calculation of Jacobian of $f_t$ (\ref{flow of radial vf})}\label{calculation of Jacobian of ft}
Given a 2-plane $S$ with an orthonormal basis $\{e_1, e_2\}$, then
\begin{equation}\label{differential of ft}
\begin{split}
df_t(e_i) & =\sqrt{1+\frac{2t}{r^2}}e_i+\frac{1}{2}\frac{2t\cdot(-2)\cdot\frac{\nabla_{e_i}r}{r^3}}{\sqrt{1+\frac{2t}{r^2}}}x\\
              & =\sqrt{1+\frac{2t}{r^2}}e_i-\frac{2t/r^3}{\sqrt{1+\frac{2t}{r^2}}}\lan e_i, \nabla r\ran x.
\end{split}
\end{equation}
Therefore, the $l$-th component of $d f_t(e_i)$ is given by
$$\big[df_t(e_i)\big]^l=\sqrt{1+\frac{2t}{r^2}}\Big(\lan e_i, \partial_l\ran-\frac{2t/r^3}{1+\frac{2t}{r^2}}\lan e_i, \nabla r\ran x^l\Big),\quad l=1, 2, 3.$$
The matrix $(df_t)^*(df_t)$ is given by:
\begin{displaymath}
\begin{split}
\big[(df_t)^*(df_t)\big]_{ij} & =\sum_{l=1}^3\big[df_t(e_i)\big]^l\big[df_t(e_j)\big]^l\\
                                        & =(1+\frac{2t}{r^2})\Big(\de_{ij}-\frac{4t/r^2}{1+\frac{2t}{r^2}}\lan e_i, \nabla r\ran\lan e_j, \nabla r\ran+\frac{(2t/r^2)^2}{(1+\frac{2t}{r^2})^2}\lan e_i, \nabla r\ran\lan e_j, \nabla r\ran\Big)\\
                                        & =(1+\frac{2t}{r^2})\Big(\de_{ij}-\frac{(1+\frac{2t}{r^2})^2-1}{(1+\frac{2t}{r^2})^2}\lan e_i, \nabla r\ran\lan e_j, \nabla r\ran\Big).
\end{split}
\end{displaymath}
So the determinant of $(df_t)^*(df_t)$ is\footnote{Using the fact that $det(I+uv^T)=1+\lan u, v\ran$, where $u, v\in \R^n$.},
\begin{displaymath}
\begin{split}
det\big[(df_t)^*(df_t)\big] & =(1+\frac{2t}{r^2})^2\Big(1-\frac{(1+\frac{2t}{r^2})^2-1}{(1+\frac{2t}{r^2})^2}|\nabla_S r|^2\Big)\\
                                       & =1+\big[(1+\frac{2t}{r^2})^2-1\big]|\nabla^{\perp}r|^2.
\end{split}
\end{displaymath}
Then, the Jacobian $Jf_t(x, S)$ is given by
\begin{equation}
Jf_t(x, S)= \sqrt{det\big[(df_t)^*(df_t)\big]}=\sqrt{1+\big[(1+\frac{2t}{r^2})^2-1\big]|\nabla^{\perp}r|^2}.
\end{equation}


\subsection{$F$-stationary varifold with bounded entropy has polynomial volume growth}

In this part, we generalize the polynomial volume growth bound for self-shrinkers with bounded Gaussian volume \cite{CZ} to the varifold setting.

Let $V\in\V_n(\R^{n+k})$ be a $F$-stationary n-varifold, i.e. for any $X\in\X_c(\R^{n+k})$,
\begin{equation}\label{F-stationary 2}
\int_{G_n(\R^{n+k})}\big(div_S X-\lan X, \frac{x}{2}\ran\big)e^{-\frac{|x|^2}{4}}dV(x, S)=0.
\end{equation}

\begin{proposition}\label{polynomial volume growth}
Assume that $V$ has bounded Gaussian volume, i.e.
$$F(V)=\frac{1}{(4\pi)^{n/2}}\int_{\R^{n+k}}e^{-\frac{|x|^2}{4}} d\|V\|<\infty.$$
Then $\|V\|$ has n-dimensional Euclidean volume growth, i.e.
$$\|V\|\big(B(0, r)\big)\leq C\cdot F(V)\cdot r^n,$$
where $B(0, r)$ is the Euclidean ball centered at $0$ of radius $r>0$, and $C=e^{1/4}(4\pi)^{n/2}$.
\end{proposition}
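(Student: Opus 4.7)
The plan is to derive the bound as a direct corollary of the fact, already established in the paper (Theorem~\ref{entropy achieve at (0 1)}, cf.\ \S\ref{entropy achieved at (0, 1)}), that for any $F$-stationary varifold $V$ the entropy $\lambda(V) := \sup_{x_0, t_0} F_{(x_0, t_0)}(V)$ is attained at $(x_0, t_0) = (0, 1)$, i.e.\ $\lambda(V) = F(V)$. Given that input, the proposition reduces to a single rescaling inequality combined with a trivial pointwise lower bound for the Gaussian kernel on a Euclidean ball; the constant $C = e^{1/4}(4\pi)^{n/2}$ that appears in the statement is precisely what one reads off from this comparison.

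Concretely, for any $R > 0$ apply the entropy inequality at $(x_0, t_0) = (0, R^2)$:
$$\frac{1}{(4\pi R^2)^{n/2}} \int_{\R^{n+k}} e^{-|x|^2/(4R^2)} d\|V\|(x) \;=\; F_{(0, R^2)}(V) \;\leq\; \lambda(V) \;=\; F(V).$$
Multiplying through by $(4\pi R^2)^{n/2}$ yields
$$\int_{\R^{n+k}} e^{-|x|^2/(4R^2)} d\|V\| \;\leq\; (4\pi)^{n/2} R^n F(V).$$
On the Euclidean ball $B(0, R)$ one has $|x|^2/(4R^2) \leq 1/4$, hence the uniform lower bound $e^{-|x|^2/(4R^2)} \geq e^{-1/4}$. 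Restricting the integral on the left to $B(0, R)$ and using this pointwise bound gives
$$e^{-1/4} \|V\|(B(0, R)) \;\leq\; \int_{B(0, R)} e^{-|x|^2/(4R^2)} d\|V\| \;\leq\; (4\pi)^{n/2} R^n F(V),$$
which rearranges immediately to $\|V\|(B(0, R)) \leq e^{1/4}(4\pi)^{n/2} F(V) R^n$, as claimed.

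The only genuinely substantive step is the appeal to the entropy-attainment statement for $F$-stationary varifolds, which is the varifold counterpart of Colding--Minicozzi's theorem for smooth self-shrinkers. It is ultimately derived from Huisken's monotonicity formula applied to the self-similar rescaling $V_s := s V$ together with the first-variation identity of an $F$-stationary varifold. Once that input is taken as given (it is in fact already invoked in Lemma~\ref{A0 is compact}, where precisely the inequality $F_{(0, R^2)}(V) \leq \lambda(V) = F(V)$ is used), the polynomial volume growth needs no further geometric-measure-theoretic analysis of $V$.
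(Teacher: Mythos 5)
Your argument is circular within the logical structure of the paper. You derive the volume growth bound from Theorem \ref{entropy achieve at (0 1)} (entropy of an $F$-stationary varifold is attained at $(0,1)$), but the paper's proof of that theorem — via Lemma \ref{part 2 of entropy achieve at (0 1)} — begins by invoking Proposition \ref{polynomial volume growth} itself: the polynomial volume growth is what guarantees that $g(s)=F_{(sy,1+as^2)}(V)$ is finite and differentiable, and what justifies plugging the non-compactly-supported vector field $X=-(asx+y)e^{-\big(\frac{|x-sy|^2}{4(1+as^2)}-\frac{|x|^2}{4}\big)}$ into the stationarity identity by compactly supported approximation. The same dependency appears in the smooth case (\cite{CM12}, \S 7.2, and the Cheng--Zhou volume estimate \cite{CZ}): volume growth comes first, entropy attainment second. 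Note also that the specific step you need is not innocuous: for $t_0=R^2>1$ the kernel $\rho_{(0,R^2)}$ dominates $\rho_{(0,1)}$ at infinity, so $F(V)<\infty$ alone does not even guarantee that $F_{(0,R^2)}(V)$ is finite; ruling out such mass growth at infinity is exactly the content of the proposition, and it must come out of the $F$-stationarity identity, not be assumed through the entropy statement. (Lemma \ref{A0 is compact}, which you cite, is downstream of both appendix results, so it cannot be used to license the reversal.)

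The fix is to prove the localized monotonicity directly, which is what the paper does: set $f=\frac{|x|^2}{4}$, introduce $I(t)=t^{-n/2}\int\phi(\tfrac{4f}{r^2})e^{-f/t}\,d\|V\|$ with a cutoff $\phi$, and test the first-variation identity (\ref{F-stationary 2}) with the \emph{compactly supported} field $X=\phi(\tfrac{4f}{r^2})e^{-(\frac{1}{t}-1)f}\nabla f$. Comparing with $I^{\pr}(t)$ and using $\phi^{\pr}\leq 0$, $|\nabla f|^2-|\nabla_S f|^2\geq 0$, one gets $I^{\pr}(t)\leq 0$ for $t\geq 1$, hence $I(r^2)\leq I(1)\leq (4\pi)^{n/2}F(V)$ after letting $\phi\to\chi_{[0,1]}$. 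Your final two lines — multiplying by $(4\pi R^2)^{n/2}$ and using $e^{-|x|^2/(4R^2)}\geq e^{-1/4}$ on $B(0,R)$ to extract the constant $e^{1/4}(4\pi)^{n/2}$ — coincide with the paper's conclusion, but the substantive inequality $I(r^2)\leq I(1)$ must be established by this cutoff argument rather than imported from the entropy theorem.
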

\begin{proof}
Denote $f(x)=\frac{|x|^2}{4}$; then $\nabla f=\frac{x}{2}$, and $|\nabla f|^2=f$. For any $n$-plane $S$ in $\R^{n+k}$, $div_S\nabla f=\frac{n}{2}$.

Given a fixed number $r>0$, denote
\begin{equation}\label{I(t)}
I(t)=\frac{1}{t^{n/2}}\int_{\R^{n+k}}\phi(\frac{4f}{r^2})e^{-\frac{f}{t}}d\|V\|(x),
\end{equation}
where $\phi(\cdot)$ is a cutoff function such that: for some $\ep>0$,
\begin{displaymath}
\left. \phi(s)\ \Bigg\{ \begin{array}{ll}
=0, \quad \textrm{ if $s\geq 1$}\\
=1, \quad \textrm{ if $s\leq 1-\ep$}\\
\geq 0, \quad \textrm{ otherwise},
\end{array}\right.
\end{displaymath}
and $\phi^{\pr}(s)\leq 0$.  Then
\begin{equation}\label{derivative of I}
I^{\pr}(t)=\frac{1}{t^{\frac{n}{2}+1}}\int\phi(\frac{4f}{r^2})\big(-\frac{n}{2}+\frac{f}{t}\big)e^{-\frac{f}{t}}d\|V\|.
\end{equation}

Choose a vector field $X\in\X_c(\R^{n+k})$ as
$$X=\phi(\frac{4f}{r^2})e^{-(\frac{1}{t}-1)f}\nabla f.$$
Then in (\ref{F-stationary 2}),
\begin{displaymath}
\begin{split}
div_S X & =\phi^{\pr}(\frac{4f}{r^2})\cdot\frac{4|\nabla_S f|^2}{r^2}e^{-(\frac{1}{t}-1)f}\\
              & +\phi(\frac{4f}{r^2})\big(\underbrace{div_S(\nabla f)}_{=\frac{n}{2}}-(\frac{1}{t}-1)|\nabla_S f|^2\big)e^{-(\frac{1}{t}-1)f},
\end{split}
\end{displaymath}
and as $\nabla f=\frac{x}{2}$
$$\lan X, \frac{x}{2}\ran=\phi(\frac{4f}{r^2})e^{-(\frac{1}{t}-1)f}|\nabla f|^2.$$
Plugging into (\ref{F-stationary 2}), we get
$$\int_{G_n(\R^{n+k})}\Big[\phi^{\pr}(\frac{4f}{r^2})\cdot\frac{4|\nabla_S f|^2}{r^2}+\phi(\frac{4f}{r^2})\big(\frac{n}{2}-(\frac{1}{t}-1)|\nabla_S f|^2-|\nabla f|^2\big)\Big]e^{-\frac{f}{t}}dV(x, S)=0.$$
Plug in $|\nabla f|^2=f$ into the above equation and use (\ref{derivative of I}); we get
$$I^{\pr}(t)=\frac{1}{t^{\frac{n}{2}+1}}\int_{G_n(\R^{n+k})}\Big[\phi^{\pr}(\frac{4f}{r^2})\cdot\frac{4|\nabla_S f|^2}{r^2}+\phi(\frac{4f}{r^2})\cdot(\frac{1}{t}-1)(|\nabla f|^2-|\nabla_S f|^2)\Big]e^{-\frac{f}{t}}d V(x, S).$$
In the above equation, $\phi^{\pr}\leq 0$; $|\nabla f|^2-|\nabla_S f|^2\geq 0$; and when $t\geq 1$, $\frac{1}{t}-1\leq 0$; so
$$I^{\pr}(t)\leq 0\quad\textrm{when $t\geq 1$}.$$
Therefore $I(t)\leq I(1)$; or equivalently
$$\frac{1}{t^{n/2}}\int\phi(\frac{4f}{r^2})e^{-f/t}d\|V\|\leq \int\phi(\frac{4f}{r^2})e^{-f}d\|V\|.$$

Let $\ep\rightarrow 0$ in the definition of $\phi$; or equivalently let $\phi$ approach the characterization function $\chi_{[0, 1]}$; then the above inequality tends to
$$\frac{1}{t^{n/2}}\int_{\{f\leq \frac{r^2}{4}\}}e^{-f/t}d\|V\|(x)\leq \int_{\{f\leq \frac{r^2}{4}\}}e^{-f}d\|V\|\leq (4\pi)^{n/2}F(V).$$
Note that $\{f\leq \frac{r^2}{4}\}=\{|x|\leq r\}$. Let $t=r^2$; then $e^{-f/t}=e^{-\frac{|x|^2}{4r^2}}\geq e^{-\frac{1}{4}}$ on $\{|x|\leq r\}$. Hence the above inequality implies that
$$\frac{1}{r^n}e^{-1/4}\int_{\{|x|\leq r\}}d\|V\|(x)\leq (4\pi)^{n/2}F(V).$$
Hence
$$\|V\|\big(B(0, r)\big)\leq e^{1/4}(4\pi)^{n/2}F(V)r^n.$$
\end{proof}


\subsection{Entropy of $F$-stationary varifold is achieved at $(\vec{0}, 1)$}\label{entropy achieved at (0, 1)}

In this part, we include a standard property for $F$-stationary varifolds with bounded Gaussian volume, which says that the Entropy functional (see (\ref{definition of entropy}) and \cite[(0.6)]{CM12}) is achieved at $(\vec{0}, 1)$. This generalizes \cite[\S 7.2]{CM12} into the varifold setting.

Given $x_0\in\R^{n+1}$, $t_0>0$, the {\em n-dimensional Gaussian density} centered at $(x_0, t_0)$ is a function $\rho_{(x_0, t_0)}: \R^{n+1}\rightarrow\R^+$, defined by
\begin{equation}\label{Gaussian density at (x t)}
\rho_{(x_0, t_0)}(x)=\frac{1}{(4\pi t_0)^{n/2}}e^{-\frac{|x-x_0|^2}{4t_0}}.
\end{equation}
Given $V\in\V_n(\R^{n+1})$, the {\em n-dimensional Gaussian volume centered at $(x_0, t_0)$} is defined by
\begin{equation}\label{Gaussian volume at (x t)}
F_{x_0, t_0}(V)=\int_{\R^{n+1}}\rho_{(x_0, t_0)}(x)d\|V\|.
\end{equation}
The {\em entropy} of $V$ is defined by
\begin{equation}\label{entropy}
\la(V)=\sup_{x_0\in\R^{n+1}, t_0>0}F_{x_0, t_0}(V).
\end{equation}

\begin{theorem}\label{entropy achieve at (0 1)}
Given $V\in\V_n(\R^{n+1})$ with bounded Gaussian volume, i.e. 
$$F(V)=\int_{\R^{n+1}}\rho_{(0, 1)}(x)d\|V\|<\infty,$$
assume that $V$ is $F$-stationary, i.e. for any $X\in\X_c(\R^{n+1})$,
\begin{equation}\label{F-stationary 3}
\int_{G_n(\R^{n+1})}\big(div_S X-\lan X, \frac{x}{2}\ran\big)\rho_{(0, 1)}(x)dV(x, S)=0.
\end{equation}
Then the entropy functional achieves a global maximum at $(0, 1)$, i.e.
$$\la(V)=F_{0, 1}(V)\big(=F(V)\big).$$
\end{theorem}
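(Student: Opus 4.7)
The proof strategy is to mimic in the varifold setting the classical Huisken-monotonicity argument used by Colding--Minicozzi \cite[\S 7.2]{CM12} in the smooth case. I set up the one-parameter family of rescaled varifolds $V_s := (D_{\sqrt{-s}})_{\#}V$ for $s < 0$, where $D_\lambda \colon x \mapsto \lambda x$; so $V_{-1}=V$, and $\{V_s\}$ should be thought of as the self-similar mean curvature flow associated with the shrinker $V$. Fix $(x_0,t_0)\in \R^{n+1}\times(0,\infty)$, set $T := t_0 - 1$, and introduce the Huisken-type functional
\[
\Theta(s) \;:=\; \int_{\R^{n+1}} \rho_{(x_0,\,T-s)}(z)\, d\|V_s\|(z), \qquad s < \min(T,0),
\]
with $\rho_{(x_0,\tau)}(z) := (4\pi\tau)^{-n/2} e^{-|z-x_0|^2/(4\tau)}$ the backward heat kernel. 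The whole theorem will follow once we show $\Theta'(s)\leq 0$ throughout its domain.

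The heart of the proof is this monotonicity. Using the explicit push-forward representation
\[
\Theta(s) \;=\; \int \rho_{(x_0,\,T-s)}(\sqrt{-s}\,y)\,(-s)^{n/2}\, dV(y,S),
\]
differentiating in $s$ produces an integrand against $dV(y,S)$ whose leading coefficient depends affinely on $y$. By elementary algebra that coefficient can be written precisely as $\mathrm{div}_S X_s(y) - \langle X_s(y), y/2\rangle$ for a time-dependent affine vector field $X_s$ on $\R^{n+1}$ of the form $\alpha(s)\, y + \beta(s, x_0)$ (an infinitesimal combination of dilation and translation), plus a manifestly non-positive ``Huisken square'' term of shape $-\tfrac{1}{4(T-s)^2}|\,\cdot\,|^2$. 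The $F$-stationarity condition (\ref{F-stationary 3}), applied with test vector field $X_s$ (first multiplied by a smooth cutoff $\phi_R$, then sending $R\to\infty$), annihilates the linear piece and leaves only the non-positive square, giving $\Theta'(s)\le 0$. The cutoff errors vanish in the limit thanks to the polynomial volume growth from Proposition \ref{polynomial volume growth} together with the Gaussian decay of $\rho_{(x_0, T-s)}$.

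Once monotonicity is established, the two endpoint evaluations close the argument. A direct substitution at $s=-1$ gives $\Theta(-1) = F_{x_0,t_0}(V)$. As $s\to -\infty$, the change of variables $u := -s\to +\infty$ makes the integrand converge pointwise to $\rho_{(0,1)}(y)$, and dominated convergence — again using polynomial volume growth as the majorant — yields $\lim_{s\to-\infty}\Theta(s)=F_{0,1}(V)=F(V)$. Monotonicity then gives $F(V) \geq F_{x_0,t_0}(V)$; taking the supremum over $(x_0,t_0)$ produces $\lambda(V)=F_{0,1}(V)$ as desired.

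The main obstacle is producing the Huisken-type monotonicity in the purely varifold framework: without access to a pointwise mean curvature vector, one cannot invoke Huisken's formula directly and must extract everything from the first-variation identity (\ref{F-stationary 3}). Concretely, the work lies in (i) identifying the correct time-dependent affine vector field $X_s$ whose Gaussian-weighted first variation reproduces $\Theta'(s)$, (ii) completing the square cleanly so that the negativity of the residual term is visible, and (iii) justifying the cutoff-removal limit, where the polynomial volume growth of Proposition \ref{polynomial volume growth} is indispensable. The remaining steps are routine calculus and dominated convergence.
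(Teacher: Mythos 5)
Your overall route is, after a change of variables, the same as the paper's: writing $u=(-s)^{-1/2}$ one checks that $\Theta(s)=F_{u x_0,\,1+(t_0-1)u^2}(V)$, so your monotonicity claim $\Theta'(s)\le 0$ is exactly Lemma \ref{part 2 of entropy achieve at (0 1)} with $y=x_0$, $a=t_0-1$, your limit $s\to-\infty$ is just evaluation at $u=0$, and your use of Proposition \ref{polynomial volume growth} for the cutoff removal and for dominated convergence mirrors the paper.

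However, the step you yourself single out as the heart of the argument is described in a way that would fail. First, a small point: the prefactor obtained by differentiating the moving Gaussian is quadratic, not affine, in the space variable. The substantive problem is the proposal to cancel the unwanted terms by applying \eqref{F-stationary 3} to an affine field $X_s(y)=\alpha(s)y+\beta(s,x_0)$ (cut off by $\phi_R$). The identity \eqref{F-stationary 3} is weighted by the \emph{fixed} kernel $\rho_{(0,1)}$, whereas every term of $\Theta'(s)$ carries the \emph{moving} kernel $\rho_{(x_0,T-s)}(\sqrt{-s}\,\cdot\,)$; testing with an affine field only yields moment identities of $\|V\|$ against $e^{-|y|^2/4}$, and these cannot annihilate an integrand weighted by a Gaussian with a different center and scale (for $t_0>1$ the moving weight even dominates the fixed one at infinity, so no pointwise non-positive remainder of the form you describe can exist). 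The missing idea --- which is precisely the content of the paper's proof, following \cite[\S 7.2]{CM12} --- is to multiply the affine field by the ratio of the two Gaussian weights, i.e. to test with $X=-(asx+y)\,e^{-\big(\frac{|x-sy|^2}{4(1+as^2)}-\frac{|x|^2}{4}\big)}$. The tangential derivative of this extra exponential factor inside $\mathrm{div}_S X$ is exactly what produces the square term, and after invoking stationarity one is left with $g'(s)=-\frac{s}{2(1+as^2)^2}\int |(asx+y)^{\perp}|^2\rho_s\,dV\le 0$. With that correction (and with the approximation of this non-compactly-supported field, which is where polynomial volume growth is genuinely needed), your argument coincides with the paper's.
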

\begin{remark}
The result is used in the proof of Lemma \ref{A0 is compact}. In fact, we only need the fact that $F_{0, t}(V)$ achieves a global maximum at $t=1$. For completeness we prove the above stronger version. The proof is adapted from \cite[\S 7.2]{CM12}.
\end{remark}
\begin{proof}
This follows from the following lemma.
\end{proof}

\begin{lemma}\label{part 2 of entropy achieve at (0 1)}
Let $V$ be $F$-stationary with $F(V)<\infty$. Given $y\in\R^{n+1}$, $a\in\R$, let
$$g(s)=F_{sy, 1+as^2}(V);$$
then $g^{\pr}(s)\leq 0$ for all $s>0$ with $1+as^2>0$.
\end{lemma}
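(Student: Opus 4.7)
The plan is a direct first-variation computation that, after two applications of $F$-stationarity, reduces $g'(s)$ to a manifestly nonpositive expression. Writing $t=1+as^2$ and $w=x-sy$, differentiating the Gaussian density under the integral gives
$$g'(s)=\int\Big[-\tfrac{nas}{t}+\tfrac{\lan y,w\ran}{2t}+\tfrac{as|w|^2}{2t^2}\Big]\rho_{sy,t}(x)\,d\|V\|(x).$$
The first term has the desired sign when $s>0$, but the middle (linear in $w$) and last (quadratic in $w$) terms are not sign-definite and must be rewritten.

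To turn the hypothesis (\ref{F-stationary 3}) into an identity adapted to the shifted weight $\rho_{sy,t}$, I observe that $\rho_{sy,t}/\rho_{0,1}=t^{-n/2}e^{\psi_s}$, where $\psi_s(x):=\tfrac14|x|^2-\tfrac{1}{4t}|x-sy|^2$ satisfies $\nabla\psi_s=\tfrac{as^2 w}{2t}+\tfrac{sy}{2}$ and the useful identity $\nabla\psi_s-\tfrac{x}{2}=-\tfrac{w}{2t}$. Substituting $e^{\psi_s}X$ for $X$ in (\ref{F-stationary 3}) and expanding $\mathrm{div}_S(e^{\psi_s}X)$ converts $F$-stationarity into the equivalent identity
$$\int\Big[\mathrm{div}_S X-\lan(\nabla\psi_s)_\perp,X\ran-\tfrac{1}{2t}\lan X,w\ran\Big]\rho_{sy,t}\,dV(x,S)=0,$$
where $(\cdot)_\perp$ denotes projection to the orthogonal complement of $S$. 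I then plan to apply this identity with the two constant-coefficient choices $X=y$ and $X=w/t$. The first rewrites $\int\tfrac{\lan y,w\ran}{2t}\rho_{sy,t}\,dV$, and the second (multiplied through by $as$) rewrites $\int\tfrac{as|w|^2}{2t^2}\rho_{sy,t}\,dV$, each in terms of the normal components $y_\perp$ and $w_\perp$ alone. Substituting both back into the formula for $g'(s)$, the $\pm\tfrac{nas}{t}$ pieces cancel exactly and the remainder collapses into the perfect square
$$g'(s)=-\tfrac{s}{2}\int\Big|y_\perp+\tfrac{as\,w_\perp}{t}\Big|^2\rho_{sy,t}(x)\,dV(x,S)\;\le\;0$$
for $s>0$, which is the conclusion.

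The main technical point is that (\ref{F-stationary 3}) is stated for compactly supported $X$, while the vector fields $X=y$ and $X=(x-sy)/t$ above grow linearly; this I would handle by the standard cutoff argument, multiplying by a smooth $\chi_R$ with $\chi_R\equiv 1$ on $B_R$ and $\mathrm{spt}\,\chi_R\subset B_{2R}$ and letting $R\to\infty$. The extra boundary term $\int\lan\nabla\chi_R,X\ran e^{\psi_s}\rho_{0,1}\,dV$ vanishes in the limit by the Gaussian decay of $\rho_{sy,t}$ together with the polynomial volume bound $\|V\|(B_R)\le CR^n$ supplied by Proposition~\ref{polynomial volume growth}; the same estimate justifies the very first step of differentiating under the integral.
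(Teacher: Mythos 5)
Your proposal is correct and is essentially the paper's own argument: the paper tests $F$-stationarity against the single field $X=-(asx+y)\,e^{-\big(\frac{|x-sy|^2}{4(1+as^2)}-\frac{|x|^2}{4}\big)}$, which (since $asx+y=as(x-sy)+(1+as^2)y$) is exactly the weight-adjusted combination of your two tests $X=y$ and $X=(x-sy)/t$, and both computations collapse to the same perfect square $g'(s)=-\frac{s}{2}\int\big|y^{\perp}+\frac{as}{t}(x-sy)^{\perp}\big|^2\rho_{sy,t}\,dV\le 0$. Your justification of the linearly growing test fields by cutoff together with the polynomial volume growth of Proposition \ref{polynomial volume growth} is likewise the same device the paper invokes.
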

\begin{proof}
As $\|V\|$ has polynomial volume growth by Proposition \ref{polynomial volume growth}, $g(s)$ is a smooth function on $s$. Denote $\rho_s(x)=\rho_{(sy, 1+as^2)}(x)$. By straightforward calculation,
\begin{displaymath}
\begin{split}
g^{\pr}(s) & =\int\Big[-\frac{nas}{1+as^2}+\frac{\lan y, x-sy\ran}{2(1+as^2)}+\frac{|x-sy|^2as}{2(1+as^2)^2}\Big]\rho_s(x) d\|V\|(x)\\
                & =\frac{1}{1+as^2}\int\Big[-nas+\frac{\lan x-sy, asx+y\ran}{2(1+as^2)}\Big]\rho_s(x) d\|V\|(x).
\end{split}
\end{displaymath}

Consider the vector field
$$X=-(asx+y)e^{-\big(\frac{|x-sy|^2}{4(1+as^2)}-\frac{|x|^2}{4}\big)}.$$
Using the polynomial volume growth of $\|V\|$ and the explicit asymptotic behavior of $X$, we can still plug $X$ into (\ref{F-stationary 3}) by using compactly supported approximations of $X$. In particular, given an $n$-plane $S$ as above,
\begin{displaymath}
\begin{split}
div_S X & =\Big(-nas+\big{\lan} asx+y, P_S\big(\frac{x-sy}{2(1+as^2)}-\frac{x}{2}\big)\big{\ran}\Big)e^{-\big(\frac{|x-sy|^2}{4(1+as^2)}-\frac{|x|^2}{4}\big)}\\
              & =\Big(-nas-\big{\lan} asx+y, P_S \frac{(asx+y)s}{2(1+as^2)}\big{\ran}\Big)e^{-\big(\frac{|x-sy|^2}{4(1+as^2)}-\frac{|x|^2}{4}\big)}\\
              & =\Big(-nas-\frac{|P_S(asx+y)|^2s}{2(1+as^2)}\Big)e^{-\big(\frac{|x-sy|^2}{4(1+as^2)}-\frac{|x|^2}{4}\big)}.
\end{split}
\end{displaymath}
So by plugging $X$ to (\ref{F-stationary 3}), we get
$$\int\Big(-nas-\frac{|P_S(asx+y)|^2s}{2(1+as^2)}+\frac{1}{2}\lan asx+y, x\ran\Big)e^{-\frac{|x-sy|^2}{4(1+as^2)}}dV(x, S)=0.$$
Multiplying the above equation with $\frac{1}{(4\pi(1+as^2))^{n/2}}\cdot\frac{1}{1+as^2}$ and subtracting with $g^{\pr}(s)$, we get
\begin{displaymath}
\begin{split}
g^{\prime}(s) & =\frac{1}{2(1+as^2)}\int\big[\frac{\lan x-sy, asx+y\ran}{1+as^2}-\lan asx+y, x\ran+\frac{|P_S(asx+y)|^2s}{1+as^2}\big]\rho_s dV(x, S)\\
                      & =\frac{1}{2(1+as^2)}\int\big[-\frac{|asx+y|^2s}{1+as^2}+\frac{|P_S(asx+y)|^2s}{1+as^2}\big]\rho_s dV(x, S)\\
                      & =\frac{1}{2(1+as^2)}\int\big[-\frac{|(asx+y)^{\perp}|^2s}{1+as^2}\big]\rho_s dV(x, S)\\
                      & \leq 0.                 
\end{split}
\end{displaymath}
\end{proof}



\parindent 0ex

Daniel Ketover, Fine Hall, Princeton University; dketover@math.princeton.edu.\\

Xin Zhou, Department of Mathematics, Massachusetts Institute of Technology, 77 Massachusetts Avenue, Cambridge, MA 02139-4307; xinzhou@math.mit.edu.

\end{document}